\def\setF{\mathbb{F}} %
\def\setZ{\mathbb{Z}} %
\def\setR{\mathbb{R}} %
\def\setN{\mathbb{N}} %
\def\setC{\mathbb{C}} %
\def\catA{\mathcal{A}} %
\def\catB{\mathcal{B}} %
\def\catN{\mathcal{N}} %
\def\catP{\mathcal{P}} %
\def\catQ{\mathcal{Q}} %
\providecommand{\ds}{\displaystyle} %
\DeclareMathOperator{\End}{End} %
\newtheorem{lemma}{Lemma}[section] %
\newtheorem{proposition}[lemma]{Proposition} %
\newtheorem{theorem}[lemma]{Theorem} %
\newtheorem{corollary}[lemma]{Corollary} %
\theoremstyle{remark} %
\theoremstyle{definition} %
\newtheorem{definition}[lemma]{Definition} %
\newcommand{\nil}{\mathcal N} %
\newcommand{\vnil}{\widetilde V \times \nil} %
\newcommand{\kvnil}{K \backslash (\widetilde V \times \mathcal{N})} %
\newcommand{\knil}{K \backslash \mathcal{N}} %
\newcommand{\orb}{\mathcal O} %
\newcommand{\sub}[1]{\ensuremath{\{1, \dots, #1 \}}} %
\newcommand{\rep}[1]{\langle #1 \rangle} %
\newcommand{\floor}[1]{\left\lfloor #1 \right\rfloor} %
\newcommand{\ceil}[1]{\left\lceil #1 \right\rceil} %
\newcommand{\n}{\ensuremath{n}} %
\newcommand{\zmod}[1][\n]{\setZ / #1 \setZ} %
\newcommand{\nneg}{\setZ^{\geq 0}} %
\newcommand{\qb}[1][\n]{\ensuremath{#1}-bipartition\xspace} %
\newcommand{\cqb}[1][\n]{striped \ensuremath{#1}-bipartition\xspace} %
\newcommand{\gqb}[1][\n]{generalized striped \ensuremath{#1}-bipartition\xspace} %
\newcommand{\Span}[1]{\left\langle#1\right\rangle} %
\newcommand{\g}{\mathfrak g}
\title{Enhanced Nilpotent Representations of a Cyclic Quiver} %
\author{Casey P.  Johnson} %
\date{20 Apr 2010} %
\begin{document}
\begin{abstract}
  We define a set of ``enhanced'' nilpotent quiver representations
  that generalizes the enhanced nilpotent cone.  This set admits an
  action by an associated algebraic group $K$ with finitely many
  orbits.  We define a combinatorial set that parametrizes the set of
  orbits under this action and we derive a purely combinatorial
  formula for the dimension of an orbit.
\end{abstract}
\maketitle

\section{Introduction} \label{sec:intro}

\subsection{The enhanced nilpotent cone}

In his study of the exotic Springer correspondence in \cite{kato-2006}
and the exotic Deligne-Langlands correspondence in \cite{kato}, Kato
introduces an object that he calls the \emph{exotic nilpotent cone}.
If $U$ is a $2 k$-dimensional symplectic vector space, let $N_0$
denote the set of nilpotent self-adjoint endomorphisms of $U$.  The
exotic nilpotent cone is the set $U \times N_0$ and it admits a
natural action by $K = Sp(U)$.

It has long been known that if $W$ is the Weyl group of type $C_k$
then the set $\widehat W$ of equivalence classes of irreducible
representations of $W$ is in bijection with the set of pairs $(\mu;
\nu)$ of partitions such that $|\mu| + |\nu| = k$.  Kato showed that
this set of ``bipartitions of size $k$'' is also naturally in
bijection with $K \setminus (U \times N_0)$, the set of orbits of $K$
on $U \times N_0$, which gives an alternative parametrization of
$\widehat W$ by $K \setminus (U \times N_0)$.

There are two \emph{enhanced nilpotent cones} closely associated to
the exotic nilpotent cone.  If $V$ is a linear space and $\nil(V)$
denotes the set of nilpotent linear endomorphisms of $V$ then the
enhanced nilpotent cone of $V$ is the set $V \times \nil(V)$.  It is
easy to see that if $V$ is a Lagrangian subspace of $U$ then $V \times
\nil(V) \subset U \times N_0 \subset U \times \nil(U)$.  On each of
these varieties there is a natural group action, namely
\begin{itemize}
\item $GL(V)$ acts on $V \times \nil(V)$,
\item $Sp(U)$ acts on $U \times N_0$,
\item $GL(U)$ acts on $U \times \nil(U)$.
\end{itemize}

Travkin proves in \cite{travkin} that $GL(V) \setminus (V \times
\nil(V))$ is parametrized by the set of bipartitions of size $k$, so
$GL(U) \setminus (U \times \nil(U))$ is parametrized by the set of
bipartitions of size $2 k$.  Achar and Henderson independently prove
the same result in \cite{AH}, going on to show that there is a natural
embedding $GL(V) \subset Sp(U) \subset GL(U)$ and that these three
parametrizations have the important compatibility property given
below.  In the statement that follows, let $\mu \cup \mu$ denote the
partition of size $2 k$ obtained from $\mu$ by doubling the
multiplicity of each row.

\begin{theorem} \label{thm:compatible} (Achar-Henderson) If $(\mu;
  \nu)$ is a bipartition and $\orb_{\mu; \nu}$ and $\mathbb O_{\mu;
    \nu}$ denote the corresponding enhanced and exotic orbits,
  respectively, then $\orb_{\mu; \nu} \subset \mathbb O_{\mu; \nu}
  \subset \orb_{\mu \cup \mu; \nu \cup \nu}$.
\end{theorem}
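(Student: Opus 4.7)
The plan is to exhibit canonical orbit representatives on each side of the two inclusions and verify that the evident embeddings $V \times \nil(V) \hookrightarrow U \times N_0 \hookrightarrow U \times \nil(U)$ carry the representative of $\orb_{\mu;\nu}$ to that of $\mathbb O_{\mu;\nu}$ and then to that of $\orb_{\mu \cup \mu; \nu \cup \nu}$. Both the Travkin and Kato parametrizations produce concrete normal forms built out of Jordan blocks, so once the normal forms are in hand the compatibility reduces to a block-by-block linear algebra check.

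For the first inclusion $\orb_{\mu;\nu} \subset \mathbb O_{\mu;\nu}$, fix a Lagrangian complement $V'$ of $V$ in $U$, so that the symplectic form identifies $V'$ with $V^*$. Take the Travkin standard representative $(v_0, x_0) \in V \times \nil(V)$ for $(\mu;\nu)$: a direct sum of Jordan blocks whose sizes are the parts of $\mu \cup \nu$, with $v_0$ the sum of the top basis vectors of the blocks labeled by $\mu$. Extend $x_0$ to $\tilde x_0 \in \End(U)$ by letting it act on $V' \cong V^*$ as $-x_0^*$. A direct calculation shows that $\tilde x_0$ is nilpotent and self-adjoint with respect to the symplectic form, so $(v_0, \tilde x_0) \in U \times N_0$. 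It then suffices to verify that $(v_0, \tilde x_0)$ is a Kato standard representative for the exotic orbit $\mathbb O_{\mu;\nu}$, which I would do by inspecting how its isotropic flag $\Span{v_0, \tilde x_0 v_0, \tilde x_0^2 v_0, \ldots}$ and the restricted nilpotent action decompose.

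For the second inclusion $\mathbb O_{\mu;\nu} \subset \orb_{\mu\cup\mu;\nu\cup\nu}$, continue with the same element $(v_0, \tilde x_0)$, now viewed inside $U \times \nil(U)$. The Jordan type of $\tilde x_0$ is the union of the Jordan types of $x_0$ on $V$ and of $-x_0^*$ on $V'$, each of which is $\mu \cup \nu$, so every part is doubled. The cyclic subspace generated by $v_0$ under $\tilde x_0$ still lies in $V$ and meets exactly the blocks labeled by $\mu$, but the passage to $U$ pairs each such block with a dual block on $V'$, contributing $\mu \cup \mu$ to the Travkin label of $(v_0, \tilde x_0)$ and leaving $\nu \cup \nu$ behind. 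This identifies the $GL(U)$-orbit of $(v_0, \tilde x_0)$ as $\orb_{\mu \cup \mu; \nu \cup \nu}$.

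The main obstacle is pinning down Kato's normal form for exotic orbits precisely enough to recognize it in the self-adjoint extension $(v_0, \tilde x_0)$. Once this is done the whole problem decouples across pairs of dual Jordan blocks, so by bilinearity of the constructions everything reduces to checking a single pair. The one subtlety at the block level is that although $v_0$ generates its cyclic subspace entirely inside $V$, the doubled Jordan structure on $U$ forces the Travkin bipartition to also record the dual blocks on $V'$; reconciling this with both the symplectic labeling convention and Travkin's convention is the only nontrivial bookkeeping.
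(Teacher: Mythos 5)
The paper does not actually prove this theorem: it is quoted from \cite{AH} as background in the introduction, so there is no internal proof to compare against. Your overall strategy --- choose a normal-form representative of $\orb_{\mu;\nu}$, extend it self-adjointly to $U$, and identify its label under the exotic and $GL(U)$-parametrizations, using $GL(V)\subset Sp(U)\subset GL(U)$ to reduce each inclusion to a single representative --- is indeed the strategy Achar and Henderson use, so the plan is sound in outline.

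The execution, however, has a concrete error at the first step. In the Achar--Henderson parametrization (the one in force here; see the corollary following Proposition \ref{prop:exv} with $\n=1$), the representative of $\orb_{\mu;\nu}$ has $x_0$ of Jordan type $\lambda=\mu+\nu$ (parts $\lambda_i=\mu_i+\nu_i$) and $v_0=\sum_i v_{i,\mu_i}$, where $\{v_{i,j}\}$ is a Jordan basis with $xv_{i,j}=v_{i,j-1}$. You instead take $x_0$ of Jordan type $\mu\cup\nu$ with $v_0$ the sum of the top vectors of the $\mu$-blocks. Already for $\mu=\nu=(1)$ this gives $x_0$ of type $(1,1)$ rather than $(2)$, and one checks $E^{x_0}v_0=V$, so your candidate lies in $\orb_{(1,1);\emptyset}$, not $\orb_{(1);(1)}$; everything downstream (the Jordan type of $\tilde x_0$, the block bookkeeping) inherits this. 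Second, the crux of the first inclusion --- that $(v_0,\tilde x_0)$ carries the exotic label $(\mu;\nu)$ --- is precisely the step you defer as ``the main obstacle,'' so that inclusion is not established. Third, for the second inclusion the invariant that determines the $GL(U)$-label is not the cyclic subspace $\setF[\tilde x_0](v_0)$ but $E^{\tilde x_0}v_0$, the span of $v_0$ under the full centralizer of $\tilde x_0$ in $\End(U)$. That space does \emph{not} lie in $V$: the centralizer contains maps carrying each $V$-block onto its dual block in $V'$, and this is exactly the mechanism that doubles $\mu$ to $\mu\cup\mu$. Your assertion that the relevant subspace ``still lies in $V$'' is in tension with the doubling you need, and the problem does not decouple block by block, since (as the formula for $\tilde\mu$ in Proposition \ref{prop:exv} shows) the invariant $E^xv$ genuinely couples distinct Jordan blocks.
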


Since each of these actions yields finitely many orbits and the groups
acting are algebraic, we have the natural partial order on orbits
defined by closure.  That is, we can say that $\orb_{\mu; \nu} \leq
\orb_{\mu'; \nu'}$ if and only if $\orb_{\mu; \nu}$ is contained in
the Zariski closure of $\orb_{\mu'; \nu'}$.  Achar and Henderson
define a combinatorial partial order $\leq$ on the set of bipartitions
of size $k$ and prove the following.

\begin{theorem}
  (Achar-Henderson) The following are equivalent:
  \begin{enumerate}
  \item $(\mu; \nu) \leq (\mu'; \nu')$
  \item $\orb_{\mu; \nu} \subset \overline \orb_{\mu'; \nu'}$
  \item $\mathbb O_{\mu; \nu} \subset \overline {\mathbb O}_{\mu';
      \nu'}$
  \end{enumerate}
\end{theorem}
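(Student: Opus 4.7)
The plan is to establish $(1) \Leftrightarrow (2)$ directly on the enhanced nilpotent cone $V \times \nil(V)$, then to use the sandwich of Theorem~\ref{thm:compatible} to transfer the equivalence to the exotic orbit closures appearing in $(3)$.

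For the enhanced half, I would introduce a collection of rank-type invariants $r_{i,j} : V \times \nil(V) \to \nneg$ — natural candidates being intersection dimensions such as $\dim\bigl(\im x^{i} \cap (\setC[x] v + \ker x^{j})\bigr)$ — chosen so that: (a) each $r_{i,j}$ is constant on $GL(V)$-orbits and the whole tuple of invariants separates orbits; (b) their values on a standard representative attached to a bipartition $(\mu;\nu)$ can be written down explicitly as functions of $(\mu;\nu)$; and (c) each $r_{i,j}$ is upper or lower semicontinuous under the Zariski topology. Parts (a) and (b) furnish a dictionary between bipartitions of size $k$ and tuples of invariants, while (c) turns orbit-closure containment into componentwise inequalities on these tuples. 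The Achar--Henderson partial order is then matched to this system of inequalities, yielding $(2) \Rightarrow (1)$. For $(1) \Rightarrow (2)$ I would reduce by induction to the covering relations of $\leq$ and, for each cover, exhibit a one-parameter family $(v_t, x_t) \in \orb_{\mu';\nu'}$ whose limit at $t=0$ is a representative of $\orb_{\mu;\nu}$.

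To pass to the exotic case I would prove $(1) \Leftrightarrow (3)$ using Theorem~\ref{thm:compatible}. For $(1) \Rightarrow (3)$: granted $(1)$ we already have $(2)$, so any $x \in \orb_{\mu;\nu} \subset \mathbb O_{\mu;\nu}$ is a limit of points of $\orb_{\mu';\nu'} \subset \mathbb O_{\mu';\nu'}$, whence $x \in \overline{\mathbb O}_{\mu';\nu'}$; since the closure is $Sp(U)$-stable, the entire orbit $\mathbb O_{\mu;\nu}$ sits inside it. For $(3) \Rightarrow (1)$ I would saturate by $GL(U)$: compatibility gives $GL(U) \cdot \mathbb O_{\mu;\nu} = \orb_{\mu \cup \mu;\,\nu \cup \nu}$ inside $U \times \nil(U)$, and applying $GL(U)$ to the inclusion $(3)$ yields $\orb_{\mu \cup \mu;\,\nu \cup \nu} \subset \overline{\orb}_{\mu' \cup \mu';\,\nu' \cup \nu'}$; the already-proven $(2) \Rightarrow (1)$ for bipartitions of size $2k$ then gives $(\mu \cup \mu;\,\nu \cup \nu) \leq (\mu' \cup \mu';\,\nu' \cup \nu')$, and a purely combinatorial cancellation argument recovers $(\mu;\nu) \leq (\mu';\nu')$.

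The main obstacle, I expect, is identifying the correct family of invariants $r_{i,j}$ and proving that their componentwise ordering matches the Achar--Henderson order bijectively. Compared with the dominance order for classical Jordan types, the marked vector $v$ breaks the symmetry and forces one to interleave rank data for the endomorphism $x$ with dimensions of its cyclic subspace $\setC[x] v$, making the bookkeeping significantly more delicate. Once the correct invariants are in hand, the doubling-cancellation lemma for the combinatorial order and the explicit degenerations along covering relations should both be comparatively routine.
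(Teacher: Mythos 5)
This statement is quoted background: the paper attributes it to Achar--Henderson and cites \cite{AH} without giving any proof, so there is no internal argument to compare yours against. Judged on its own terms, your reduction scheme is sound. The passage from the enhanced equivalence $(1)\Leftrightarrow(2)$ to the exotic statement $(3)$ via Theorem~\ref{thm:compatible} works in both directions: for $(1)\Rightarrow(3)$ you correctly use that $\overline{\mathbb O}_{\mu';\nu'}$ is $Sp(U)$-stable and contains a point of $\mathbb O_{\mu;\nu}$, and for $(3)\Rightarrow(1)$ the $GL(U)$-saturation argument is legitimate because $\mathbb O_{\mu;\nu}$ is a nonempty $Sp(U)$-stable subset of the single $GL(U)$-orbit $\orb_{\mu\cup\mu;\nu\cup\nu}$ and $g\cdot\overline S=\overline{g\cdot S}$ for each $g$. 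Your choice of invariants ($\dim\ker x^i$ together with $\dim(\setC[x]v+\ker x^j)$-type dimensions) is in fact the one Achar and Henderson use.

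The caveat is that what you have written is a strategy, not a proof: essentially all of the mathematical content is deferred. Specifically, (i) the verification that your tuple of semicontinuous invariants separates orbits and that the induced componentwise order coincides with the combinatorial order $\leq$ on bipartitions, (ii) the explicit one-parameter degenerations realizing each covering relation, and (iii) the ``doubling-cancellation'' lemma asserting that $(\mu\cup\mu;\nu\cup\nu)\leq(\mu'\cup\mu';\nu'\cup\nu')$ implies $(\mu;\nu)\leq(\mu';\nu')$ are each nontrivial; the last is a genuine lemma about the partial-sum characterization of the order, not an immediate cancellation, and (i) and (ii) together constitute the bulk of the Achar--Henderson argument. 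Until those three components are supplied, the proposal establishes only that the exotic case follows formally from the enhanced case plus Theorem~\ref{thm:compatible}, which is the easiest part of the theorem.
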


Henderson has proved in \cite{henderson} that, for each $\lambda$,
$\overline {\mathbb O}_{\emptyset; \nu}$ has the same intersection
cohomology as $\overline \orb_{\emptyset; \nu}$, with all degrees
doubled.  He and Achar conjecture in \cite{AH} that the same holds for
all bipartitions $(\mu; \nu)$ and they also outline a programme for
investigating this conjecture.

\subsection{Nilpotent cyclic quiver representations}

Achar-Henderson's parametrization begins with the well-known fact that
if $V$ is a finite-dimensional linear space then the Jordan normal
form parametrizes the conjugacy classes of nilpotent matrices.  Since
the Jordan form of a nilpotent matrix corresponds to a partition of
size $k = \dim V$, there is a natural bijection \[\{\text{partitions
  of size } k \} \longleftrightarrow \{\text{conjugacy classes in }
\nil(V)\}.\] Furthermore, $\nil(V)$ embeds in $V \times \nil(V)$ as
$\{0\} \times \nil(V)$ and the set of partitions embeds in the set of
bipartitions via $\nu \mapsto (\emptyset; \nu)$ in such a way
$\orb_{\nu} \cong \orb_{\emptyset; \nu}$.  In other words, the
parameter set reduces to the classical parametrization when the
enhanced nilpotent orbits are just ordinary nilpotent orbits in
disguise.

On the other hand, we can generalize the nilpotent cone in another
way.  Let $\Gamma$ be a cyclic quiver of order $\n$.  We can view
$\Gamma$ as the set $X = \zmod$ with directed edges $e_i = (i, i +
[1]), i \in X$.  A representation of $\Gamma$ assigns to each $i \in
X$ a finite-dimensional vector space $V_i$ and a linear transformation
$x_i \in \operatorname{Hom} (V_i, V_{i + [1]})$.  We say that such a
representation is nilpotent if $x_{[\n - 1]} \circ \cdots \circ
x_{[1]} \circ x_{[0]} \in \End(V_{[0]})$ is nilpotent.

If we fix $V_i$ for each $i \in X$, we can consider the set $\nil$ of
nilpotent quiver representations of $\Gamma$ with the chosen
underlying vector spaces.  Then $K = \prod_{i \in X} GL(V_i)$
naturally acts on $V = \prod_{i \in X} V_i$, hence on $\nil$ by
conjugation.  Thus, we can consider the problem of parametrizing the
set $K \backslash \nil$ of orbits of this action.  Kempken solves this
problem in \cite{kempken} for the case of a cyclic graph, showing that
these orbits are parametrized by a generalization of the classical
notion of partition, which we will call ``colored partitions.''  In
addition, Kempken presents a combinatorial description of the closure
order in $K \backslash \nil$.  In sections \ref{sec:colored} and
\ref{sec:nil} we present a full exposition of the parametrization,
culminating in theorem \ref{thm:paramc}.

The case where $\Gamma$ is a $2$-cycle is of particular interest.  If
$G$ is the real Lie group $U(p, q)$, with Lie algebra $\g = \mathfrak
u(p,q)$, then the set of nilpotent adjoint orbits in $\g$ is
parametrized by the set of signed (2-colored) partitions of signature
$(p,q)$ in, e.g., \cite{collingwood}.  On the other hand, if $K =
GL(p, \setC) \times GL(q, \setC)$ and $\catN = \{(x, y) \mid x:
\setC^p \to \setC^q, y: \setC^q \to \setC^p \text{ are linear with } x
\circ y \text{ nilpotent}\}$ then the Kostant-Sekiguchi bijection is a
natural one-to-one correspondence between the set of nilpotent adjoint
orbits and $K \backslash \nil$.  Thus, we can view the set of adjoint
orbits as a set of orbits of quiver representations over a cyclic
graph of order 2.

\subsection{Main results}

The objective of this paper is to present a framework that generalizes
both of these constructions.  We ``enhance'' the set of nilpotent
quiver representations of a cyclic graph by taking its product with
the natural representation $V_i$ of $K$, for some $i \in X$.  $K$
naturally acts on $V_i \times \nil$ with finitely many orbits.  In
fact, we will take the product of $\nil$ with the slightly larger
space $\widetilde V = \bigcup_{i \in X} V_i$ that includes $V_i$ for
each $i \in X$.

In theorem \ref{thm:param} we show that the set $\kvnil$ of orbits is
finite and is parametrized by the set of ``\cqb{}s'' defined in
section \ref{ssec:marked}.  Essentially, a \cqb is a partition that is
colored to reflect the quiver structure and also divided in two parts,
each of which is a natural deformation of a partition.  As a
consequence, we obtain a parametrization of $K \backslash (V_i \times
\nil)$.

In the case $\n = 1$ the set of \cqb{}s reduces precisely to the set
of bipartitions, yielding the Achar-Henderson parametrization.  On the
other hand, we have the natural embedding $\{0\} \times \nil \subset
\vnil$ and we will show that the parameters that correspond to orbits
in $\{0\} \times \nil$ can be viewed as colored partitions in a
natural way that reduces to the usual parametrization of $\knil$.

Lastly, we derive formulas for computing the dimension of an orbit
given its corresponding \cqb.  These formulas quickly reduce to the
formulas that have been given by Achar-Henderson and Kempken.  We are
particularly interested in the case $\n = 2$ discussed above.  In this
setting, the \cqb{}s yield especially simple dimension formulas, which
are included as corollaries \ref{cor:dimA}, \ref{cor:dimB}, and
\ref{cor:dim}. With this framework in place, we will be in a position
to explore the closure order---a topic that will be covered in a
future paper.

\section{Colored vector spaces} \label{sec:colored}

Most of the constructions in this paper rely on the notion of a
colored vector space.  In this section we introduce colored vector
spaces and we develop their basic structure, including a few
properties of their automorphisms and endomorphisms.  This section is
elementary in nature, so few proofs are included.  In most cases, the
claims are explicit enough to suggest a proof.

\subsection{Notation}

Throughout this paper we fix the following notational conventions,
most of which are standard.

\begin{enumerate}
\item $\setZ$ is the additive group of integers and $\nneg$ is the set
  of nonnegative integers.
\item $\setN$ is the additive semigroup of positive integers.
\item $\setR$ and $\setC$ are the fields of real and complex numbers,
  respectively.
\item $\n$ is a fixed positive integer.
\item If $k$ is an integer then $\zmod[k]$ is the usual quotient
  group, the cyclic group with $k$ elements.  If $i \in \setZ$ then we
  write $[i] = i + k \setZ \in \zmod[k]$.  To prevent notational
  clutter, if $0 \leq i < k$ then we will write $i$ rather than $[i]$
  whenever we can do so unambiguously.  If we need to be more explicit
  in choosing a particular representative of $[i]$, we will write
  $\rep i$ or $\rep{[i]}$ to denote the smallest nonnegative element
  of $[i]$.
\item $\floor{\cdot}$ is the floor function: $\floor x = \max\{y \in
  \setZ \mid y \leq x\}$.
\item $\ceil{\cdot}$ is the ceiling function: $\ceil x = \min\{y \in
  \setZ \mid y \geq x\}$.
\item If $W$ is a finite-dimensional linear space then $\End(W)$ is
  the set of linear endomorphisms of $W$ and $GL(W)$ denotes the group
  of invertible elements of $\End(W)$.
\item If $v \in W$ is a vector then $\Span v$ is the linear span of
  $v$ in $W$.  If $U \subset W$ is a nonempty subset then $\Span U$ is
  defined similarly.
\item If $A$ and $B$ are subspaces of $W$ then $A + B = \Span{A \cup
    B}$.
\end{enumerate}

\subsection{Colored vector spaces} \label{ssec:cvs}

Let $V$ be a finite-dimensional vector space over a field $\mathbb F$
with (not necessarily nonzero) vector subspaces $V_1, \dots, V_\n
\subset V$ such that $V = V_1 \oplus \dots \oplus V_\n$.  The tuple
$(V, V_1, \dots, V_\n)$ is an \n-\emph{colored vector space}.
Throughout this paper the symbol $V$ will refer to the vector space
$V$, together with the prescribed colored structure.  We will refer to
the elements of $\sub\n$ as \emph{colors}.  If $W \subset V$ is a
subset, we may write $W_i = W \cap V_i$.

\begin{definition}
  If $W \subset V$ is an arbitrary subset, the \emph{signature} of $W$
  is the function $\xi(W):\sub \n \to \setZ$ defined by $\xi_i(W) =
  \dim ( {\Span W}_i)$.  Observe that $\xi(W) = \xi(\Span W)$.
\end{definition}
      
\begin{lemma}
  If $W \subset V$ is a subspace then $\ds \dim W \geq \sum_{i = 1}^\n
  \xi_i(W)$.  If $U \subset W$ then $\xi_i(U) \leq \xi_i(W)$ for each
  $i$.
\end{lemma}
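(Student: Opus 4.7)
The plan is to treat the two assertions separately, reducing both to elementary facts about direct sums. Since $\xi(W) = \xi(\langle W \rangle)$, throughout I may replace $W$ by its span and thereby assume $W$ is a subspace; this is the remark already made in the definition.

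For the first inequality, I would observe that each $W_i = W \cap V_i$ is a linear subspace of $W$, with $\dim W_i = \xi_i(W)$. The key point is that the decomposition $V = V_1 \oplus \cdots \oplus V_\n$ forces the family $\{W_i\}$ to be in direct sum inside $W$: any linear relation $\sum w_i = 0$ with $w_i \in W_i \subseteq V_i$ is a linear relation among vectors in the original direct sum, so each $w_i = 0$. Consequently $W_1 \oplus \cdots \oplus W_\n$ embeds as a subspace of $W$, and comparing dimensions gives $\sum_i \xi_i(W) = \sum_i \dim W_i \leq \dim W$.

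For the second statement, monotonicity is even more direct. If $U \subseteq W$ is a subset, then $\langle U \rangle \subseteq \langle W \rangle$, hence $\langle U \rangle \cap V_i \subseteq \langle W \rangle \cap V_i$ for every color $i$, and taking dimensions yields $\xi_i(U) \leq \xi_i(W)$.

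There is no real obstacle here, since both claims follow from the defining property of an internal direct sum. The only subtlety worth flagging is that the inequality in the first part is typically strict: a vector of $W$ need not have each of its color-components lying in $W$ (e.g.\ take $V = V_1 \oplus V_2$ and let $W$ be the graph of a nonzero isomorphism $V_1 \to V_2$, so $W_1 = W_2 = 0$ while $\dim W > 0$). This failure of equality is precisely what will motivate the refined signature invariants introduced later, so it seems worth recording this example in a short remark after the proof.
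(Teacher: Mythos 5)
Your proof is correct; the paper omits a proof of this lemma (it falls under the section's blanket remark that few proofs are included), and your argument --- the subspaces $W_i = W \cap V_i$ are automatically in direct sum inside $W$ because $V = V_1 \oplus \cdots \oplus V_\n$, together with monotonicity of spans for the second claim --- is exactly the intended elementary one. Your closing example of a strict inequality is also apt: it is precisely the failure of equality that the subsequent definition of \emph{colored} subspaces isolates.
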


\begin{definition}
  We say that a subspace $W \subset V$ is \emph{colored} if $\dim W =
  \sum_{i=1}^\n \xi_i(W)$.  A vector $v \in V$ is colored if $\Span v$
  is colored.  A finite subset of $V$ is colored if each of its
  elements is colored.
\end{definition}

We can think of colored subspaces as those that lie ``squarely'' in
$V$, relative to $V_1, \dots, V_\n$.  For example, if $V = \setR^2$
with $V_1$ and $V_2$ the two coordinate axes then $(V, V_1, V_2)$ is a
colored vector space.  In this case, the only colored subspaces of $V$
are $0, V_1, V_2$, and $V$.  On the other hand, if $\n = 1$ and $V =
\setR^2$ then we have the colored vector space $(V, V)$ and each
subspace of $V$ is colored.

\begin{lemma} \
  \begin{enumerate}
  \item $V$ is colored with $\xi_i(V) = \dim V_i$.
  \item $0 \subset V$ is colored with $\xi_i(0) = 0$.
  \item If $W \subset V$ is a subspace then $W_1 + \dots + W_\n$ is
    the largest colored subspace of $W$ and $\xi(W) = \xi(W_1 + \dots
    + W_\n)$.

  \end{enumerate}
\end{lemma}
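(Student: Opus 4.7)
The plan is to dispatch (1) and (2) by direct unwinding of the definitions and then focus on (3), which is where the content lies. For (1), observe that $\Span V = V$ and, since $V = V_1 \oplus \cdots \oplus V_\n$, we have $V \cap V_i = V_i$; hence $\xi_i(V) = \dim V_i$ and $\sum_i \xi_i(V) = \dim V$, so $V$ is colored. For (2), $\Span 0 = 0$ forces every $\xi_i(0) = 0$, matching $\dim 0 = 0$.

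For (3), set $W' = W_1 + \cdots + W_\n$ where $W_i = W \cap V_i$. Because each $W_i \subset V_i$ and $V = V_1 \oplus \cdots \oplus V_\n$, the sum defining $W'$ is automatically direct, so $\dim W' = \sum_i \dim W_i$. By the same directness, any vector in $W' \cap V_i$, written as $\sum_j w_j$ with $w_j \in W_j \subset V_j$, must have $w_j = 0$ for $j \neq i$, whence $W' \cap V_i = W_i$. Therefore $\xi_i(W') = \dim W_i = \xi_i(W)$, which simultaneously gives $\xi(W) = \xi(W')$ and $\dim W' = \sum_i \xi_i(W')$, so $W'$ is colored.

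To show $W'$ is the largest colored subspace of $W$, let $U \subset W$ be colored. Then $U \cap V_i \subset W \cap V_i = W_i$, so $\sum_i (U \cap V_i) \subset W'$. Since the colors are independent in $V$, the sum $\sum_i (U \cap V_i)$ is direct, and the coloring hypothesis on $U$ gives $\dim U = \sum_i \xi_i(U) = \sum_i \dim(U \cap V_i) = \dim \sum_i (U \cap V_i)$. Combined with the obvious inclusion $\sum_i (U \cap V_i) \subset U$, this forces equality, hence $U \subset W'$.

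There is essentially no obstacle here; the argument is bookkeeping built on the directness of $V = V_1 \oplus \cdots \oplus V_\n$. The only subtle point worth isolating is the implication ``$U$ colored $\Rightarrow U = \sum_i (U \cap V_i)$,'' which is what lets one conclude that every colored subspace of $W$ is captured by $W'$.
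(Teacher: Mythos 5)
Your proof is correct; the paper states this lemma without proof (Section 2 explicitly omits routine verifications), and your argument is exactly the intended one, with the key step ``$U$ colored $\Rightarrow U = \sum_i (U \cap V_i)$'' being the content of the paper's subsequent Proposition \ref{lem:csub}. Nothing to add.
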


\begin{proposition} \label{lem:csub} If $W$ is a subspace of $V$ then
  the following are equivalent.
  \begin{enumerate}
  \item $W$ is colored,
  \item $W = W_1 + \dots + W_\n$,
  \item $(W, W_1, \dots, W_\n)$ is a colored vector space,
  \item $W$ has a colored basis,
  \item Each $w \in W$ can be written (uniquely) as $w = w_1 + \dots +
    w_\n$, with $w_i \in W_i$.
  \item If $w \in W$ is written $w = w_1 + \dots + w_\n$ with $w_i \in
    V_i$ then $w_i \in W$.
  \end{enumerate}
\end{proposition}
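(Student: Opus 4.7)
The plan is to prove the six conditions equivalent via a cyclic chain of implications $(1) \Rightarrow (2) \Rightarrow (3) \Rightarrow (4) \Rightarrow (5) \Rightarrow (6) \Rightarrow (1)$, leaning heavily on the preceding lemma, which identifies $W_1 + \dots + W_\n$ as the largest colored subspace of $W$ with $\xi(W) = \xi(W_1 + \dots + W_\n)$, and on the fact that the ambient decomposition $V = V_1 \oplus \dots \oplus V_\n$ is already direct.

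For $(1) \Rightarrow (2)$ I would observe that the lemma gives $\dim(W_1 + \dots + W_\n) = \sum \xi_i(W) = \dim W$, and since $W_1 + \dots + W_\n \subset W$, equality of dimensions forces $W = W_1 + \dots + W_\n$. For $(2) \Rightarrow (3)$, the sum $W_1 + \dots + W_\n$ is automatically direct because the $V_i$ form a direct sum and $W_i \subset V_i$; hence $(W, W_1, \dots, W_\n)$ is an $\n$-colored vector space in the sense of Section~\ref{ssec:cvs}. The implication $(3) \Rightarrow (4)$ is immediate: concatenating a basis of each $W_i$ yields a basis of $W$ whose elements each lie in some $V_i$, so they are colored by definition.

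For $(4) \Rightarrow (5)$ I would expand an arbitrary $w \in W$ in a colored basis, then group the basis terms by their color to obtain the decomposition $w = w_1 + \dots + w_\n$ with $w_i \in W_i$; uniqueness is inherited from the direct sum $V = V_1 \oplus \dots \oplus V_\n$. The implication $(5) \Rightarrow (6)$ is essentially a uniqueness argument: if $w = w_1 + \dots + w_\n$ with $w_i \in V_i$, then since $V$ itself is a direct sum, the $w_i$ are forced to equal the colored components given by (5), which already lie in $W_i \subset W$. Finally, $(6) \Rightarrow (1)$ says every $w \in W$ has each of its $V_i$-components inside $W$, hence inside $W_i$, so $W = W_1 \oplus \dots \oplus W_\n$ and $\dim W = \sum \dim W_i = \sum \xi_i(W)$.

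There is no real obstacle here; each step is a short unpacking of definitions, and the only thing to be careful about is consistently using the directness of $V = V_1 \oplus \dots \oplus V_\n$ to pass between ``$w_i \in V_i$'' and ``$w_i \in W_i$.'' The cyclic structure keeps the write-up short and avoids reproving the same fact twice.
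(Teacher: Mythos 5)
Your proof is correct, and since the paper deliberately omits a proof of this proposition (the section declares its claims ``explicit enough to suggest a proof''), your cyclic chain of implications is exactly the intended elementary unpacking: each step correctly combines the preceding lemma (that $W_1 + \dots + W_\n$ is the largest colored subspace of $W$ with the same signature) with the directness of $V = V_1 \oplus \dots \oplus V_\n$. Nothing further is needed.
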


\begin{corollary}
  If $W \subset V$ is a subspace then there is a colored subspace $U
  \subset V$ such that $V = U \oplus W$.
\end{corollary}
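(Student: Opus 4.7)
The plan is to invoke Proposition \ref{lem:csub}, which tells us that a subspace is colored if and only if it admits a colored basis (a basis each of whose elements lies in some $V_i$). So the objective reduces to producing colored vectors $u_1, \dots, u_d \in V$, with $d = \dim V - \dim W$, that are linearly independent modulo $W$; the subspace $U = \Span{u_1, \dots, u_d}$ will then be colored, have trivial intersection with $W$, and satisfy $V = U \oplus W$ by a dimension count.

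I would build the $u_j$ inductively, enlarging $U^{(k)} := W + \Span{u_1, \dots, u_k}$ one colored dimension at a time. If $U^{(k)} \neq V$, pick any $v \in V \setminus U^{(k)}$ and decompose $v = v_1 + \cdots + v_\n$ with $v_i \in V_i$ using the ambient coloring $V = V_1 \oplus \cdots \oplus V_\n$. Since $U^{(k)}$ is a subspace, if every $v_i$ lay in $U^{(k)}$ then so would their sum $v$; thus some $v_i \notin U^{(k)}$ (in particular $v_i \neq 0$), and I take $u_{k+1} = v_i$. This new vector is colored by construction, and the enlarged list remains linearly independent modulo $W$: any relation $c_{k+1} u_{k+1} + \sum_{j \leq k} c_j u_j \in W$ with $c_{k+1} \neq 0$ would rewrite $u_{k+1}$ as an element of $U^{(k)}$, contradicting its choice.

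Each iteration raises $\dim U^{(k)}$ by exactly one, so after $d$ steps we reach $U^{(d)} = V$. The collection $\{u_1, \dots, u_d\}$ is then a colored basis of a subspace $U$ which by Proposition \ref{lem:csub} is colored, and linear independence modulo $W$ gives $U \cap W = 0$, hence $V = U \oplus W$. There is no serious obstacle here; the only point that requires a moment's thought is the passage from ``$v \notin U^{(k)}$'' to ``some coordinate $v_i$ lies outside $U^{(k)}$'', which is immediate from the closure of $U^{(k)}$ under addition. A more streamlined variant would instead take $U$ to be a colored subspace of maximal dimension subject to $U \cap W = 0$ and apply the same coordinatewise argument to show that $U + W$ must then equal $V$, but the inductive version above makes the complement explicit.
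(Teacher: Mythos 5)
Your proof is correct. It takes the opposite direction from the paper's argument: the paper starts from a colored subspace $U$ with $U + W = V$ (e.g.\ $U = V$ itself) and shrinks it, using the fact that any nontrivial dependence relation among a basis of $W$ together with a colored basis of $U$ must involve a colored basis vector, which can then be discarded without destroying $U + W = V$; you instead grow a colored complement from below, and your key step is the observation that if $v \notin U^{(k)}$ then some colored coordinate $v_i$ of $v$ also lies outside $U^{(k)}$, because $U^{(k)}$ is closed under addition. Both arguments are elementary inductions on dimension and both ultimately rest on Proposition \ref{lem:csub} (a subspace spanned by a colored basis is colored). Your bottom-up version has the mild advantage of producing the complement explicitly as a span of colored vectors and of never needing to verify that the shrunken subspace still satisfies $U + W = V$; the paper's top-down version avoids having to argue linear independence of the accumulated vectors modulo $W$, since it only ever deletes from an already independent set. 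Your closing remark about the maximal-dimension variant is also a valid streamlining.
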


\begin{proof}
  Let $U$ be any colored subspace such that $U + W = V$.  We know that
  such $U$ exist because $V$ is an example.  The proposition
  guarantees a colored basis $\catB$ for $U$.  We may also choose any
  basis $\catA$ of $W$.  If $U \cap W \neq 0$ then there is a
  nontrivial dependence relation among the elements of $\catA \cup
  \catB$.  Since $\catA$ is a linearly independent set, this
  dependence relation must nontrivially include an element $v \in
  \catB$.  Clearly, $U' = \Span{\catB \setminus \{v\}}$ is colored
  with $U + W = V$ and $\dim U' < \dim U$.  The result follows by
  induction.
\end{proof}

\begin{corollary}
  The set of colored vectors in $V$ is precisely $\ds \widetilde V =
  \bigcup_{i=1}^\n V_i$.
\end{corollary}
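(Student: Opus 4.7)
The plan is to characterize colored vectors via the decomposition criterion in proposition \ref{lem:csub}, part (6), applied to the one-dimensional subspace $\Span v$.

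First, I would handle the easy inclusion $\widetilde V \subseteq \{\text{colored vectors}\}$. If $v \in V_i$, then $\Span v \subseteq V_i$, so $(\Span v)_i = \Span v$ and $(\Span v)_j = 0$ for $j \neq i$. Hence $\xi_i(\Span v) = \dim \Span v$ and $\xi_j(\Span v) = 0$ for $j \neq i$, so $\sum_j \xi_j(\Span v) = \dim \Span v$, meaning $\Span v$ is colored.

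For the reverse inclusion, suppose $v$ is colored, and use the direct-sum structure of $V$ to write $v = v_1 + \cdots + v_\n$ uniquely with $v_i \in V_i$. By condition (6) of proposition \ref{lem:csub} applied to $W = \Span v$, each $v_i$ lies in $\Span v$, so we may write $v_i = \alpha_i v$ for some scalar $\alpha_i \in \setF$, with the relation $\sum_i \alpha_i = 1$. If $v = 0$, then $v \in V_i$ trivially for any $i$. Otherwise, if two distinct scalars $\alpha_i, \alpha_j$ were both nonzero, then the nonzero vector $v = \alpha_i^{-1} v_i = \alpha_j^{-1} v_j$ would lie in $V_i \cap V_j = 0$ (using the direct-sum assumption), a contradiction. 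Hence exactly one $\alpha_i$ equals $1$ and all others vanish, giving $v = v_i \in V_i \subseteq \widetilde V$.

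The only subtlety is recognizing that the definition of ``colored vector'' is a statement about the one-dimensional subspace $\Span v$, which lets proposition \ref{lem:csub}(6) do all the work; once we have $v_i \in \Span v$, the direct-sum condition $V_i \cap V_j = 0$ immediately forces $v$ to live in a single color component. No intricate obstacle appears.
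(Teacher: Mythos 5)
Your proof is correct. The paper states this corollary without proof (it is one of the ``elementary'' facts left to the reader in section \ref{sec:colored}), so there is no official argument to compare against; your route through proposition \ref{lem:csub}(6) is a perfectly valid one, and you correctly handle both the zero vector and the use of the direct-sum condition $V_i \cap V_j = 0$ to force a single nonzero scalar $\alpha_i$. The only remark worth making is that the reverse inclusion admits an even shorter argument straight from the definition: if $v \neq 0$ is colored then $1 = \dim \Span v = \sum_i \xi_i(\Span v)$, so exactly one summand $\xi_i(\Span v) = \dim(\Span v \cap V_i)$ equals $1$, which already says $\Span v \subseteq V_i$ and hence $v \in V_i$; this avoids invoking the proposition and the scalar bookkeeping. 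Both arguments are sound, and yours has the mild virtue of illustrating how condition (6) is meant to be used.
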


\begin{definition} \label{def:colorfunction} We define the ``color''
  function $\ds \chi: \widetilde V \setminus \{0\} \to \sub \n$ by
  $\chi(v) = i$, where $v \in V_i$.
\end{definition}

We mention here some standard results that we will use immediately.

\begin{lemma} \label{lem:sums} \
  \begin{enumerate}
  \item If $A, B \subset V$ are subspaces then $\dim A \cap B + \dim
    (A + B) = \dim A + \dim B$.
  \item Assume that $\ds \{a_i\}_{i=1}^\infty$ and $\ds
    \{b_i\}_{i=1}^\infty$ are sequences of real numbers satisfying
    $a_i \leq b_i$ for each $i \in \setN$.  If the series $\sum_{i
      =1}^\infty a_i$ and $\sum_{i=1}^\infty b_i$ are each convergent
    and their sums are equal then $a_i = b_i$ for each $i$.
  \end{enumerate}
\end{lemma}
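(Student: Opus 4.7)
The plan is to handle the two parts independently, since they are unrelated standard facts included here for later reference.

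For part (1), the approach is the classical basis-extension argument. First I would choose a basis $\catC = \{c_1, \dots, c_r\}$ of $A \cap B$, where $r = \dim(A \cap B)$. Then I would extend $\catC$ to a basis $\catC \cup \catA$ of $A$ and to a basis $\catC \cup \catB$ of $B$, with $|\catA| = \dim A - r$ and $|\catB| = \dim B - r$. The key step is to verify that $\catC \cup \catA \cup \catB$ is a basis of $A + B$: spanning is immediate since every element of $A + B$ is a sum of elements of $A$ and $B$, each expressible in the respective basis. For linear independence, a dependence relation rearranges so that an element of $A$ equals an element of $B$, hence lies in $A \cap B$; expanding in $\catC$ and using independence of $\catC \cup \catA$ and of $\catC \cup \catB$ forces all coefficients to vanish. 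Counting gives $\dim(A + B) = r + (\dim A - r) + (\dim B - r) = \dim A + \dim B - \dim(A \cap B)$.

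For part (2), let $c_i = b_i - a_i \geq 0$. By linearity of convergent series, $\sum_{i=1}^\infty c_i = \sum b_i - \sum a_i = 0$. Since the partial sums of a series of nonnegative terms are monotone and bounded above by the total sum $0$, every partial sum must be $0$, forcing each $c_i = 0$, i.e.\ $a_i = b_i$.

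Neither part presents a real obstacle; both are standard textbook facts. The only thing requiring any care is the linear independence check in part (1), but this is routine. I would keep the write-up brief, in keeping with the author's remark at the start of Section~\ref{sec:colored} that elementary proofs are typically omitted.
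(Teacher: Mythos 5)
Your proof is correct, and the paper itself offers no proof of this lemma --- it is explicitly presented as a pair of standard results, consistent with the author's stated policy of omitting elementary proofs in this section. Both of your arguments (basis extension for the dimension formula, and the nonnegative-series argument for part (2)) are the standard ones and are sound.
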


\begin{lemma}
  If $A, B \subset V$ are colored subspaces then $A \cap B$ and $A +
  B$ are colored and $\xi(A + B) + \xi(A \cap B) = \xi(A) + \xi(B)$.
  If $A \cap B = 0$ then $\xi(A \oplus B) = \xi(A) + \xi(B)$.
\end{lemma}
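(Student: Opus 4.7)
The plan is to apply Proposition \ref{lem:csub} to identify the color components of $A \cap B$ and $A + B$ explicitly as $A_i \cap B_i$ and $A_i + B_i$, and then reduce the signature identity to the classical dimension formula, Lemma \ref{lem:sums}(1), applied inside each $V_i$ independently.

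First I would handle $A \cap B$ via condition (6) of Proposition \ref{lem:csub}: given $v \in A \cap B$ with color-decomposition $v = v_1 + \cdots + v_\n$, condition (6) applied to the colored subspace $A$ puts each $v_i$ in $A$, and applied to $B$ puts each $v_i$ in $B$; hence $v_i \in A \cap B$, which is condition (6) for $A \cap B$. The same argument shows $(A \cap B)_i = A_i \cap B_i$.

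For $A + B$ I would instead use condition (2). Writing $A = A_1 + \cdots + A_\n$ and $B = B_1 + \cdots + B_\n$ gives $A + B = \sum_i (A_i + B_i)$, with each $A_i + B_i \subset (A + B)_i$ automatic. The slightly less automatic step, and the main technical point of the proof, is the reverse containment $(A + B)_i \subset A_i + B_i$: for $v \in (A + B)_i$ I would write $v = a + b$ with $a \in A$, $b \in B$, color-decompose $a$ and $b$ using the colored structure of $A$ and $B$, and then invoke the uniqueness of the decomposition in $V = V_1 \oplus \cdots \oplus V_\n$ to match $i$-th components and kill the rest.

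Once $(A + B)_i = A_i + B_i$ and $(A \cap B)_i = A_i \cap B_i$ are in hand, the identity $\xi_i(A + B) + \xi_i(A \cap B) = \xi_i(A) + \xi_i(B)$ is just Lemma \ref{lem:sums}(1) applied to the subspaces $A_i, B_i$ of $V_i$. The final assertion follows since $A \cap B = 0$ forces $\xi(A \cap B) = 0$.
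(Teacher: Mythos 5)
Your proof is correct, but it takes a different route from the paper's. You establish the structural identities $(A \cap B)_i = A_i \cap B_i$ and $(A + B)_i = A_i + B_i$ directly -- the intersection via condition (6) of Proposition \ref{lem:csub}, the sum via the decomposition $A + B = \sum_i (A_i + B_i)$ together with uniqueness of components in $V = V_1 \oplus \dots \oplus V_\n$ -- and then the signature identity falls out color by color from the classical formula $\dim(A_i \cap B_i) + \dim(A_i + B_i) = \dim A_i + \dim B_i$ applied inside each $V_i$. The paper instead only records the trivial containments $(A \cap B)_i = A_i \cap B_i$ and $A_i + B_i \subset (A + B)_i$ and runs a sandwich argument: it chains $\dim A + \dim B$ through a sequence of equalities and inequalities back to itself, then invokes Lemma \ref{lem:sums}(2) to force every intermediate inequality to be an equality, which simultaneously yields coloredness of $A \cap B$ and $A + B$, the reverse containment $(A+B)_i \subset A_i + B_i$, and the signature identity. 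Your version is more explicit and makes the color components visible from the start, at the cost of carrying out the component-matching argument for $(A+B)_i$; the paper's version is shorter because it never has to verify that containment directly. Both are complete proofs of all the claims in the lemma.
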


\begin{proof}
  Obviously, $(A \cap B)_i = A_i \cap B_i$ and $A_i + B_i \subset (A +
  B)_i$, so
  \begin{align*}
    \dim A + \dim B &= \sum_{i=1}^\n \xi_i(A) + \sum_{i=1}^\n \xi_i(B) \\
    &= \sum_{i=1}^\n \left(\xi_i(A) + \xi_i(B)\right) \\
    &= \sum_{i=1}^\n \left( \dim (A_i) + \dim (B_i) \right)\\
    &= \sum_{i=1}^\n \left( \dim (A_i \cap B_i) + \dim (A_i + B_i) \right)\\
    &\leq \sum_{i=1}^\n \xi_i(A \cap B) + \sum_{i=1}^\n \xi_i(A + B) \\
    &\leq \dim (A \cap B) + \sum_{i=1}^\n \xi_i(A + B) \\
    &\leq \dim (A \cap B) + \dim (A + B).  \\
    &= \dim A + \dim B,
  \end{align*}
  so by lemma \ref{lem:sums} each inequality above is an equality and
  all of the claims follow.
\end{proof}

\begin{lemma}
  If $A \subset V$ is a colored subspace then $(V / A, V_1 / A_1,
  \dots, V_\n / A_\n)$ is a colored vector space, with $\xi(V / A) =
  \xi(V) - \xi(A)$.  If $W$ is a subspace of $V$ containing $A$ then
  $W$ is colored if and only if $W / A$ is colored.
\end{lemma}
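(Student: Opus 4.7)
The plan is to exploit Proposition \ref{lem:csub}: since $A$ is colored, $A = A_1 \oplus \cdots \oplus A_\n$, so $A_i = A \cap V_i$ and each quotient $V_i / A_i$ is a genuine subspace of $V/A$ via the composition $V_i \hookrightarrow V \twoheadrightarrow V/A$ (whose kernel is $V_i \cap A = A_i$). All three claims will then follow from direct dimension counts.

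First I would verify that $V/A = (V_1/A_1) \oplus \cdots \oplus (V_\n/A_\n)$. Given $v \in V$, write $v = v_1 + \cdots + v_\n$ with $v_i \in V_i$; then $v + A = \sum_i (v_i + A_i)$ in $V/A$, so the subspaces $V_i/A_i$ span. For independence, suppose $\sum_i(v_i + A_i) = 0$ with $v_i \in V_i$. Then $\sum_i v_i \in A$, and since $A = A_1 \oplus \cdots \oplus A_\n$ we may write $\sum_i v_i = \sum_i a_i$ with $a_i \in A_i \subset V_i$; the directness of $V = V_1 \oplus \cdots \oplus V_\n$ forces $v_i = a_i \in A_i$, so each $v_i + A_i = 0$. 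This gives $\xi_i(V/A) = \dim(V_i/A_i) = \dim V_i - \dim A_i = \xi_i(V) - \xi_i(A)$, which is both the coloredness of $V/A$ and the dimension identity.

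For the last claim, let $W \supset A$ be a subspace. The key observation is that $(W/A)_i = (W_i + A)/A$: an element $w + A \in W/A$ lies in $V_i/A_i$ iff $w \equiv v_i \pmod{A}$ for some $v_i \in V_i$, and because $A \subset W$ the difference $w - v_i \in A$ lies in $W$, forcing $v_i \in W \cap V_i = W_i$. Combined with $W_i \cap A = W \cap V_i \cap A = A \cap V_i = A_i$, this yields
\[
\xi_i(W/A) = \dim (W_i + A)/A = \dim W_i - \dim A_i = \xi_i(W) - \xi_i(A).
\]
Summing over $i$ and using $\sum_i \xi_i(A) = \dim A$ (since $A$ is colored), we get $\sum_i \xi_i(W/A) = \sum_i \xi_i(W) - \dim A$, while $\dim(W/A) = \dim W - \dim A$. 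Hence $\dim(W/A) = \sum_i \xi_i(W/A)$ if and only if $\dim W = \sum_i \xi_i(W)$, i.e., $W/A$ is colored iff $W$ is.

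There is no real obstacle here; the only step requiring attention is the identification $(W/A)_i = (W_i + A)/A$, where one must use $A \subset W$ to pull the representative $v_i$ back into $W$. Everything else is bookkeeping with the dimension additivity that already follows from the earlier lemmas.
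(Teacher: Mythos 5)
Your proof is correct. The paper gives no proof of this lemma (Section~2 explicitly omits proofs of its elementary claims, adding only the remark that $V_i/A_i$ is to be read as $(V_i+A)/A$), and your argument is the natural one it intends: decompose $V/A$ as $\bigoplus_i V_i/A_i$ using Proposition~\ref{lem:csub} for the directness, then do the dimension count. The one point that genuinely needs care --- the identification $(W/A)_i = (W_i+A)/A$, which requires $A\subset W$ to pull the representative back into $W$, together with $W_i\cap A=A_i$ --- is exactly the point you isolate and handle correctly, so nothing is missing.
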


Strictly speaking, in the above lemma $V_i / A_i$ should be
interpreted as $(V_i + A) / A$, but the isomorphism is clear.

\begin{lemma}
  A subset $\catB \subset V$ is a colored basis of $V$ if and only if
  $\catB_i$ is a basis of $V_i$ for each $i$.
\end{lemma}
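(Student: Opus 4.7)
The plan is to unpack what ``colored basis'' means and then exploit the direct sum structure $V = V_1 \oplus \dots \oplus V_\n$. By definition, a colored basis is a basis of $V$ all of whose elements are colored vectors; by the preceding corollary, the colored vectors of $V$ are exactly those lying in $\widetilde V = \bigcup_i V_i$. Since $V_i \cap V_j = 0$ for $i \neq j$, any nonzero element of $\widetilde V$ belongs to exactly one $V_i$, so a colored basis $\catB$ is the disjoint union $\bigsqcup_i \catB_i$, where $\catB_i = \catB \cap V_i$ per the convention fixed in section~\ref{ssec:cvs}.

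For the forward direction, assume $\catB$ is a colored basis. Each $\catB_i$ is linearly independent as a subset of the linearly independent set $\catB$, so it remains to show that $\catB_i$ spans $V_i$. Given $v \in V_i$, expand $v = \sum_{b \in \catB} c_b b$ and group terms by color to obtain $v = \sum_j \left( \sum_{b \in \catB_j} c_b b \right)$, a decomposition whose $j$th summand lies in $V_j$. The uniqueness of the direct sum decomposition in $V = V_1 \oplus \dots \oplus V_\n$ forces all summands with $j \neq i$ to vanish, so $v = \sum_{b \in \catB_i} c_b b$ and $\catB_i$ spans $V_i$.

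For the reverse direction, assume each $\catB_i$ is a basis of $V_i$ and set $\catB = \bigcup_i \catB_i$. Every element of $\catB$ is colored by construction, the union is disjoint (bases do not contain $0$, and $V_i \cap V_j = 0$ for $i \neq j$), and $\catB$ spans $V$ because any $v = v_1 + \dots + v_\n$ with $v_i \in V_i$ can be written as a combination of the $\catB_i$. A dimension count $|\catB| = \sum_i |\catB_i| = \sum_i \dim V_i = \dim V$ then forces $\catB$ to be a basis. The argument is essentially a restatement of the direct sum axiom compatible with the coloring; there is no substantive obstacle beyond keeping track of the bookkeeping between $\catB$, the $\catB_i$, and the direct sum decomposition.
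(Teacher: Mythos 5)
The paper states this lemma without proof (it sits in the section that announces ``few proofs are included''), so there is nothing to compare against; your argument is the standard one and is correct. Both directions work: the forward direction correctly uses that a basis contains no zero vector, so each element lies in exactly one $V_i$, and then invokes uniqueness of the direct-sum decomposition to see that $\catB_i$ spans $V_i$; the reverse direction is a routine cardinality-plus-spanning count. One caveat worth a sentence in your write-up: in the reverse direction you \emph{set} $\catB = \bigcup_i \catB_i$, which quietly adds the hypothesis that $\catB$ contains no uncolored vectors. As literally stated the converse is false without it --- e.g.\ $\catB = \{e_1, e_2, e_1 + e_2\}$ in $\setR^2$ with $V_1, V_2$ the axes has each $\catB_i$ a basis of $V_i$ but is not a (colored) basis of $V$. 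This is an imprecision inherited from the lemma itself, whose intended hypothesis is clearly that $\catB$ is a colored subset partitioned by the $\catB_i$; you should just say explicitly that you are assuming $\catB = \bigsqcup_i \catB_i$ rather than deriving it.
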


\subsection{Colored change of basis}

$K = GL(V_1) \times \dots \times GL(V_\n) \subset GL(V)$ acts on $V$,
preserving $V_i$.  The orbits are parametrized by the power set of
$\sub \n$, so there are $2^\n$ orbits.  If $v \in V$ is written as $v
= v_1 + \dots + v_\n$, with $v_i \in V_i$, then the corresponding set
is $\{i \mid v_i \neq 0\}$.

More generally, $K$ acts on the set of subspaces of $V$.  In fact, if
$k \in K$ then $\chi(k \cdot v) = \chi(v)$ for all colored $v$.
Therefore, $\xi(W) = \xi(k \cdot W)$, so $W$ is colored if and only if
$k \cdot W$ is colored.  We conclude that this action restricts to a
signature-preserving action on colored subspaces.  We wish to
parametrize the orbits of this action---a task that will be easier
once we have established a definition, motivated by $\xi$.

\begin{definition}
  A \emph{signature} is a function $f: \sub \n \to \nneg$.  We define
  the \emph{size} of $f$ by $|f| = \sum_{i = 1}^\n f(i)$.  If $f$ and
  $g$ are signatures then we say that $f \leq g$ if $f(i) \leq g(i)$
  for each $i$.
\end{definition}

\begin{lemma} \
  \begin{enumerate}
  \item The set of signatures is a monoid partially ordered by $\leq$.
  \item If $f$, $g$, and $h$ are signatures then $f \leq g$ if and
    only if $f + h \leq g + h$.
  \item If $f \leq g$ are signatures then $|f| \leq |g|$.
  \item If $f$ and $g$ are signatures with $f \leq g$ then $|f| = |g|$
    if and only if $f = g$.
  \end{enumerate}
\end{lemma}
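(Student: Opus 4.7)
The plan is to verify each of the four claims by reducing to pointwise statements about the ordered monoid $(\nneg, +, \leq)$, since signatures are functions to $\nneg$ with pointwise addition and pointwise order.

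For (1), I would check the monoid axioms: define addition pointwise, $(f+g)(i) = f(i) + g(i)$. Associativity, commutativity, and existence of an identity (the zero signature) are immediate from the corresponding properties of $\nneg$. For the partial order, reflexivity, antisymmetry, and transitivity of $\leq$ on signatures each reduce to the same properties of $\leq$ on $\nneg$ applied coordinate by coordinate.

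For (2), I would observe that $f \leq g$ means $f(i) \leq g(i)$ for every $i \in \sub\n$, which is equivalent by cancellation in $\nneg$ to $f(i) + h(i) \leq g(i) + h(i)$ for every $i$, i.e., $f + h \leq g + h$. For (3), if $f \leq g$ then summing the inequalities $f(i) \leq g(i)$ over $i \in \sub\n$ yields $|f| \leq |g|$ directly.

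For (4), one direction is trivial: if $f = g$ then $|f| = |g|$. For the other direction, suppose $f \leq g$ and $|f| = |g|$. Then
\[ 0 = |g| - |f| = \sum_{i=1}^{\n} \bigl(g(i) - f(i)\bigr), \]
and since each summand $g(i) - f(i)$ is a nonnegative integer, each must equal zero, so $f(i) = g(i)$ for every $i$ and hence $f = g$.

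No step is a real obstacle; the whole lemma is formal bookkeeping about the pointwise structure on signatures, and the only mildly nontrivial observation is the cancellation step in (2) and the vanishing-of-nonnegative-summands argument in (4).
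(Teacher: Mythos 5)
Your proof is correct; the paper omits a proof of this lemma entirely (it is in the section declared to be elementary, where "few proofs are included"), and your pointwise reduction to the ordered monoid $(\nneg,+,\leq)$ is exactly the routine verification the author intends the reader to supply. The cancellation step in (2) and the vanishing-of-nonnegative-summands argument in (4) are the right observations and are also essentially the content of part (2) of lemma \ref{lem:sums}, which the paper uses for the analogous statement about subspaces.
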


\begin{lemma}
  If $W \subset V$ is a subspace then
  \begin{enumerate}
  \item $\xi(W)$ is a signature.
  \item $W$ is colored if and only if $|\xi(W)| = \dim W$.
  \item If $U \subset W$ then $\xi(U) \leq \xi(W)$.
  \item If $U \subset W$ are subspaces satisfying $\xi(U) = \xi(W)$
    and $W$ is colored then $U = W$.
  \item If $f \leq \xi(W)$ is a signature then there is a colored
    subspace $U \subset W$ such that $\xi(U) = f$.
  \end{enumerate}
\end{lemma}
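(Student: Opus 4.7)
My plan is to dispatch parts (1)--(3) almost immediately from the definitions and the first lemma of subsection~\ref{ssec:cvs}, then handle (4) with a short dimension-squeeze and (5) by an explicit construction. Nothing in the statement looks deep, so the task is mostly bookkeeping within the framework already laid down.

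For (1) I would simply observe that $\xi_i(W) = \dim(\Span{W})_i$ is a nonnegative integer, so $\xi(W)$ is a function $\sub{\n} \to \nneg$, which is exactly a signature. For (2) the equivalence is immediate from the definition of ``colored'' and the identity $|\xi(W)| = \sum_{i=1}^\n \xi_i(W)$. For (3) I would just cite the second half of the first lemma of subsection~\ref{ssec:cvs}, which already asserts $\xi_i(U) \leq \xi_i(W)$ whenever $U \subset W$.

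For (4), the argument is a squeeze on dimension using the inequality $|\xi(U)| \leq \dim U$ from the same lemma, the hypothesis $\xi(U) = \xi(W)$, the fact that $W$ is colored (so $\dim W = |\xi(W)|$ by (2)), and the containment $U \subset W$. Chaining these gives
\[
\dim W \;=\; |\xi(W)| \;=\; |\xi(U)| \;\leq\; \dim U \;\leq\; \dim W,
\]
so every inequality is an equality; hence $\dim U = \dim W$ and $U = W$. For (5), I would build $U$ one color at a time: for each $i$, since $f(i) \leq \xi_i(W) = \dim W_i$, choose any subspace $U_i \subset W_i$ of dimension $f(i)$ and set $U = U_1 + \cdots + U_\n$. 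Because $V = V_1 \oplus \cdots \oplus V_\n$ and $U_i \subset V_i$, this sum is direct and $U \cap V_i = U_i$. Proposition~\ref{lem:csub} then certifies that $U$ is colored, and $\xi_i(U) = \dim U_i = f(i)$ as required.

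The only point requiring even mild care is the verification $U \cap V_i = U_i$ in (5), and this is immediate from the uniqueness of decomposition in $V = V_1 \oplus \cdots \oplus V_\n$. I do not expect any genuine obstacle; each clause is one or two lines of routine manipulation against earlier lemmas.
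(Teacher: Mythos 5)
Your proof is correct, and since the paper deliberately omits proofs in this elementary section, your argument is exactly the routine verification the author intends: definitions for (1)--(3), a dimension squeeze for (4), and the color-by-color construction with the direct-sum uniqueness check for (5). No gaps.
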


\begin{proposition}
  The set of orbits of the $K$-action on the set of subspaces of $V$
  is parametrized by signatures $f \leq \xi(V)$.  That is, if $U$ and
  $W$ are colored then they are $K$-conjugate if and only if $\xi(U) =
  \xi(W)$.  In particular, the set of orbits is finite.
\end{proposition}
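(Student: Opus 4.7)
The plan is to construct a bijection between $K$-orbits of colored subspaces of $V$ and signatures $f \le \xi(V)$, via the map $[W] \mapsto \xi(W)$. This is really an assembly of results already on the page, so the task is mainly to identify which lemma supplies which piece. That the map is well-defined and lands in $\{f : f \le \xi(V)\}$ follows from the observation preceding the definition of signature (every $k \in K$ preserves signatures of colored subspaces) together with part (3) of the preceding lemma, which gives $\xi(W) \le \xi(V)$. Surjectivity is part (5) of that same lemma: for any signature $f \le \xi(V)$ there is a colored subspace $U \subset V$ with $\xi(U) = f$.

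The main step is injectivity. Given colored subspaces $U$ and $W$ with $\xi(U) = \xi(W)$, Proposition \ref{lem:csub} gives $U = U_1 + \dots + U_\n$ and $W = W_1 + \dots + W_\n$, where $U_i = U \cap V_i$ and similarly for $W_i$. For each color $i$, both $U_i$ and $W_i$ are subspaces of $V_i$ of the common dimension $\xi_i(U) = \xi_i(W)$, so I can pick $k_i \in GL(V_i)$ with $k_i(U_i) = W_i$. Assembling $k = (k_1, \dots, k_\n) \in K$ and using that $k$ acts componentwise on the decomposition $V = V_1 \oplus \dots \oplus V_\n$, one computes $k \cdot U = \sum_i k_i(U_i) = \sum_i W_i = W$, so $U$ and $W$ lie in the same orbit. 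Finiteness of the parameter set is then automatic, since each value $f(i)$ ranges over the finite set $\{0, 1, \dots, \xi_i(V)\}$.

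There is no serious obstacle; the one place to be careful is that the colored hypothesis on $U$ and $W$ is genuinely used. Without it, $U_1 + \dots + U_\n$ would be strictly smaller than $U$ and a colorwise change of basis would fail to carry $U$ to $W$; indeed, the orbits on arbitrary subspaces are not even finite in number, as a generic line in an $\n = 2$ example has $\xi = 0$ yet forms a continuous family. The colored hypothesis is precisely what allows the colorwise block-diagonal subgroup $K \subset GL(V)$ to act transitively on subspaces with a prescribed signature.
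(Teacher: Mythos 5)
Your proof is correct and is exactly the argument the paper intends: the paper states this proposition without proof (the section announces that ``few proofs are included''), and the key step is, as you say, that each $GL(V_i)$ acts transitively on subspaces of $V_i$ of a fixed dimension, so a colorwise change of basis carries $U = U_1 + \dots + U_\n$ to $W = W_1 + \dots + W_\n$ once the signatures agree; well-definedness, surjectivity, and finiteness are supplied by the surrounding lemmas just as you cite them.

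One small inaccuracy in your closing remark: it is true that the signature fails to separate orbits of non-colored subspaces (a generic line in the $\n=2$, $\dim V_1 = \dim V_2 = 1$ example has $\xi = 0$ but is not conjugate to the zero subspace), and that is the right reason the colored hypothesis is needed. But your stronger claim that the orbits on \emph{arbitrary} subspaces form an infinite set is false in that example --- the generic lines constitute a single continuous $K$-orbit, not a continuous family of orbits --- and indeed for small $\n$ the total number of orbits on all subspaces is still finite (this is the finite-type subspace/quiver phenomenon); infinitude only sets in for larger $\n$. This does not affect the proof itself, which is complete.
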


This statement can be generalized further.  If $0 = f_0 < f_1 < \dots
< f_r = \xi(V)$ is a chain of signatures then we can apply the above
lemma to build a chain of colored subspaces $0 = W_0 \subset \dots
\subset W_r = V$ with $\xi(W_k) = f_k$.  $K$ naturally acts on such
colored partial flags and we might ask what the orbits are.  This is
straightforward, summarized in the following proposition, which is an
immediate consequence of proposition \ref{lem:K-basis}.

\begin{proposition}
  The set of $K$-orbits on partial flags of colored subspaces is
  finite and is parametrized by chains $0 = f_0 < f_1 < \dots < f_r =
  \xi(V)$ of signatures.  That is, two colored partial flags $0 = W_0
  \subsetneq \dots \subsetneq W_{r_1} = V$ and $0 = U_0 \subsetneq
  \dots \subsetneq U_{r_2} = V$ are $K$-conjugate if and only if $r_1
  = r_2$ and $\xi(W_k) = \xi(U_k)$ for each $k$.
\end{proposition}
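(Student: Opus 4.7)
The plan is to establish three things in sequence: existence of a flag realizing any given chain of signatures, $K$-invariance of the chain associated to a flag, and $K$-conjugacy of two flags sharing a chain.

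For existence, I would induct on $r$. The preceding lemma furnishes a colored subspace $W_1 \subset V$ with $\xi(W_1) = f_1$. By the last lemma of section \ref{ssec:cvs}, $V/W_1$ is a colored space of signature $\xi(V) - f_1$, and the shifted chain $0 < f_2 - f_1 < \dots < f_r - f_1$ is realized there by induction; pulling back to $V$ and prepending $W_0 = 0$ gives a colored flag with the prescribed signatures. For the invariance direction, $K$ preserves both coloredness and signature of subspaces (as observed at the opening of this subsection), so $K$-conjugate flags must have equal length and identical signature functions at each level.

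The conjugacy direction is the substantive point. Given flags $W_\bullet$ and $U_\bullet$ sharing the chain $(f_k)$, the strategy is to construct colored bases $\catB$ and $\catC$ of $V$ that are \emph{adapted} to the respective flags, in the sense that the first $\dim W_k$ vectors of $\catB$ form a colored basis of $W_k$ (and similarly for $\catC$ with $U_k$). Such adapted bases can be built by iteration: having produced a colored basis of $W_{k-1}$, choose a colored basis of the colored quotient $W_k / W_{k-1}$ and lift it into $W_k$ using the splitting corollary. The $k$-th block then contributes exactly $\xi_i(W_k) - \xi_i(W_{k-1})$ vectors of color $i$, a count that depends only on the signature chain. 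Hence $\catB$ and $\catC$ have identical color counts in every block, and proposition \ref{lem:K-basis} produces an element of $K$ carrying $\catB$ onto $\catC$ block by block; this element then maps $W_j$ onto $U_j$ for every $j$.

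The main obstacle is organizational rather than conceptual: one must verify that adapted bases in the indicated sense really do exist (which reduces to the splitting corollary applied to each colored quotient) and that proposition \ref{lem:K-basis} applies with enough generality to match bases with prescribed per-color block sizes. Once those ingredients are in hand, the proof is essentially bookkeeping through the induction on $r$.
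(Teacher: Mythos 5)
Your proposal is correct and follows exactly the route the paper intends: the paper gives no separate proof, stating only that the proposition is an immediate consequence of Proposition \ref{lem:K-basis}, and your argument (build colored bases adapted to each flag, match them block by block and color by color, invoke Proposition \ref{lem:K-basis}) is precisely the fleshed-out version of that remark. The existence and invariance parts likewise match the paper's preceding discussion.
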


\begin{proposition} \label{lem:K-basis} If $\catB = \{v_{i,j} \in V_i
  \mid 1 \leq i \leq n, 1 \leq j \leq \dim V_i\}$ and $\catB' =
  \{v_{i,j}' \in V_i \mid 1 \leq i \leq n, 1 \leq j \leq \dim V_i\}$
  are colored bases of $V$ then the automorphism of $V$ defined by
  $v_{i,j} \mapsto v_{i,j}'$ is in $K$.
\end{proposition}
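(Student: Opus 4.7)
The plan is to simply unwind the definitions, using the lemma at the end of section~\ref{ssec:cvs} that characterizes colored bases fiberwise. Specifically, the map $g: V \to V$ prescribed by $v_{i,j} \mapsto v_{i,j}'$ is a well-defined linear automorphism of $V$ because $\catB$ and $\catB'$ are each bases of $V$; what must be verified is that $g$ preserves the decomposition $V = V_1 \oplus \cdots \oplus V_\n$, i.e. that $g(V_i) \subset V_i$ for every $i$.

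First, I would invoke the lemma that a subset $\catC \subset V$ is a colored basis of $V$ if and only if $\catC_i = \catC \cap V_i$ is a basis of $V_i$ for each $i$. Applying this to both $\catB$ and $\catB'$, we conclude that for each fixed $i$ the sets $\{v_{i,j}\}_{1 \leq j \leq \dim V_i}$ and $\{v_{i,j}'\}_{1 \leq j \leq \dim V_i}$ are both bases of $V_i$. Hence the restriction of $g$ to $V_i$ is a linear map $V_i \to V_i$ that sends a basis of $V_i$ to a basis of $V_i$, and so determines an element $g_i \in GL(V_i)$.

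Then I would assemble these pieces: the tuple $(g_1, \dots, g_\n)$ is an element of $K = GL(V_1) \times \cdots \times GL(V_\n)$, and under the inclusion $K \subset GL(V)$ it acts on a vector $v = v_1 + \cdots + v_\n$ (with $v_i \in V_i$) by $v \mapsto g_1(v_1) + \cdots + g_\n(v_\n)$. By construction this agrees with $g$ on the basis $\catB$, so by linearity it coincides with $g$ on all of $V$. Therefore $g \in K$, as claimed.

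There is no real obstacle here; the only subtlety is noticing that the hypothesis ``$v_{i,j} \in V_i$'' in the statement is what lets us separate $g$ into its fiberwise components, and this is exactly what the colored basis lemma formalizes. The proof is essentially a bookkeeping exercise once that lemma is in hand.
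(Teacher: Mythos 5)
Your proof is correct, and since the paper omits the proof of this proposition entirely (the section explicitly leaves such elementary claims to the reader), your argument is exactly the intended one: restrict to each $V_i$, observe that a basis of $V_i$ is carried to a basis of $V_i$, and assemble the resulting tuple $(g_1,\dots,g_\n)\in GL(V_1)\times\cdots\times GL(V_\n)=K$. No gaps.
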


\subsection{Colored endomorphisms}

\begin{definition}
  $x \in \End(V)$ is \emph{colored} if $xv$ is colored for every
  colored $v \in V$.
\end{definition}

\begin{proposition}
  If $x \in \End(V)$ then the following are equivalent:
  \begin{enumerate}
  \item $x$ is colored,
  \item $xW$ is colored for every colored subspace $W$,
  \item There is a function $\sigma: \sub \n \to \sub\n$ such that
    $xV_i \subset V_{\sigma(i)}$.
  \end{enumerate}
\end{proposition}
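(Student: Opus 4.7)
The plan is to establish the three-way equivalence by a cycle $(2) \Rightarrow (1) \Rightarrow (3) \Rightarrow (2)$, since each implication then becomes short given the structural lemmas already proved for colored subspaces.

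The direction $(2) \Rightarrow (1)$ is essentially definitional: a colored vector $v$ has colored span $\Span v$ by definition, so by $(2)$ the one-dimensional subspace $x \Span v = \Span{xv}$ is colored, which is precisely the statement that $xv$ is colored.

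The implication $(1) \Rightarrow (3)$ is the heart of the proposition. Fix $i \in \sub\n$. If $V_i = 0$ there is nothing to check, so set $\sigma(i)$ arbitrarily. Otherwise, pick a basis $v_1, \dots, v_m$ of $V_i$; each $v_l$ is colored, so by $(1)$ each $xv_l$ lies in $\widetilde V = \bigcup_j V_j$. I would define $\sigma(i)$ to be the color of any nonzero $xv_l$, and then show that this choice is independent of $l$. The key observation is that for any $l \neq l'$ with both $xv_l, xv_{l'}$ nonzero, the sum $v_l + v_{l'} \in V_i$ is colored, so $xv_l + xv_{l'}$ must be colored; but a nonzero vector in $\widetilde V$ lies in a \emph{unique} $V_j$ by the direct sum decomposition, so its unique decomposition forces $\chi(xv_l) = \chi(xv_{l'})$. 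Thus $xV_i \subset V_{\sigma(i)}$ follows from linearity.

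For $(3) \Rightarrow (2)$, suppose $W \subset V$ is colored, so by Proposition \ref{lem:csub} we may write $W = W_1 + \dots + W_\n$ with $W_j \subset V_j$. Then $xW = xW_1 + \dots + xW_\n$, and by $(3)$ each summand $xW_j$ is contained in $V_{\sigma(j)}$. Grouping the summands by the value of $\sigma$, one obtains $xW = \sum_k T_k$ where $T_k = \sum_{\sigma(j) = k} xW_j \subset V_k$, so $xW$ is a sum of subspaces of the $V_k$'s, hence colored by Proposition \ref{lem:csub} again.

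The only step requiring any real argument is the uniqueness-of-color claim inside $(1) \Rightarrow (3)$; the rest is a straightforward application of Proposition \ref{lem:csub}. I anticipate no serious obstacle, only the need to handle the degenerate cases $V_i = 0$ and $xv_l = 0$ cleanly so that $\sigma$ is well-defined on all of $\sub\n$.
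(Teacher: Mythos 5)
Your proposal is correct and follows essentially the same route as the paper: the decisive step, $(1) \Rightarrow (3)$, rests on the identical observation that two vectors of the same color whose images had different colors would sum to a colored vector with a non-colored image, and the remaining implications are the same routine applications of Proposition \ref{lem:csub}. The only cosmetic difference is that you close the cycle via a direct $(3) \Rightarrow (2)$ using the decomposition $W = W_1 + \dots + W_\n$, where the paper instead notes $(3) \Rightarrow (1)$ and $(1) \Rightarrow (2)$ separately; both are equally immediate.
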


\begin{proof} \

  \mbox{(3) $\implies$ (1)} is obvious.  In fact, $\chi(xv) =
  \sigma(\chi(v))$ if both vectors are nonzero.

  \mbox{(2) $\implies$ (1)} is also immediate, for if $v$ is colored
  then $\Span v$ is colored, hence $x \Span v = \Span{xv}$ is colored.

  \mbox{(1) $\implies$ (2)} follows once we have chosen a colored
  basis for $W$.

  \mbox{(1) $\implies$ (3)} is proved by contrapositive.  Fix $i \in
  \sub \n$.  If $xV_i \neq 0$ then there are $v, w \in V_i$ such that
  $xv$ and $xw$ are nonzero and colored.  If $\chi(xv) \neq \chi(xw)$
  then $v + w$ is colored but $x(v + w) = xv + xw$ is not.  Therefore,
  $x$ is not colored.
\end{proof}

If $x V_i \subset V_{\sigma(i)}$ for each $i$ then we may say that $x
\in \End(V)$ is \emph{$\sigma$-colored}.  The set of all
$\sigma$-colored endomorphisms of $V$ is a linear space and contains
$\nil_\sigma$, the cone of nilpotent $\sigma$-colored endomorphisms of
$V$.  Note that the map $x \mapsto \sigma$ is well-defined only to the
extent that $xV_i \neq 0$.  That is, if $xV_i = 0$ then $\sigma(i)$
may be arbitrary.  Otherwise, $xV_i$ is well-defined.  This shows that
$\catN_\sigma \cap \catN_{\sigma'}$ is not empty.  In fact, the zero
transformation is in $\catN_\sigma$ for each $\sigma$.  If $\sigma$ is
the identity function and $x$ is $\sigma$-colored then we say that $x$
is \emph{trivially colored}.  Clearly, $K$ is precisely the set of
trivially colored automorphisms of $V$.

The equivalence of (1) and (3) brings us back to quiver
representations.  Since $\sigma: \sub \n \to \sub \n$, we can think of
$\sigma$ as a functional graph.  That is, the vertices are elements of
$\sub \n$ and the edges are precisely the pairs $(i, \sigma(i))$.  The
proposition shows that $x$ is $\sigma$-colored if and only if $x$ can
be thought of as a quiver representation of $\sigma$ with linear
spaces $V_i$ and maps $\ds x|_{V_i}: V_i \to V_{\sigma(i)}$.  While we
are really concerned with the case where $\sigma$ is an $\n$-cycle,
there are a few results that we can prove if $\sigma$ is not so
specialized.  With this perspective in mind, we can think of a colored
subspace $W$ as simply a choice of $(W_1, \dots, W_\n)$, with $W_i
\subset V_i$.

One nice property possessed by representations of functional graphs as
opposed to more general quivers is that there is a clear notion of
nilpotency that coincides with our usual understanding of nilpotency.
Since each vertex has exactly one outgoing edge we can choose bases
for $V_i$ and write the quiver representation as a matrix $A$.  The
representation is nilpotent if $A$ is nilpotent.
 
\begin{lemma}
  If $x \in \End (V)$ is $\sigma$-colored and $y \in \End(V)$ is
  $\tau$-colored then $x y$ is $\sigma \tau$-colored.  In particular,
  $x^k$ is $\sigma^k$-colored.
\end{lemma}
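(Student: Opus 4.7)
The plan is to unwind the definition of ``$\sigma$-colored'' directly and check that compositions land in the right summand. Recall that by the previous proposition, $x$ being $\sigma$-colored is equivalent to the inclusion $xV_i \subset V_{\sigma(i)}$ for every $i \in \sub{\n}$, and similarly for $y$.

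First I would fix $i \in \sub{\n}$ and compute the image of $V_i$ under the composite. Since $y$ is $\tau$-colored we have $yV_i \subset V_{\tau(i)}$, and since $x$ is $\sigma$-colored we have $xV_{\tau(i)} \subset V_{\sigma(\tau(i))}$. Chaining these gives
\[
(xy)V_i \;=\; x(yV_i) \;\subset\; xV_{\tau(i)} \;\subset\; V_{\sigma(\tau(i))} \;=\; V_{(\sigma\tau)(i)},
\]
which is exactly the condition that $xy$ is $\sigma\tau$-colored.

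For the ``in particular'' claim, I would proceed by induction on $k \geq 1$. The base case $k = 1$ is trivial. For the inductive step, assuming $x^{k-1}$ is $\sigma^{k-1}$-colored, the first part applied with $x$ in place of the left factor and $x^{k-1}$ in place of the right factor gives that $x \cdot x^{k-1} = x^k$ is $\sigma \cdot \sigma^{k-1} = \sigma^k$-colored.

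There is no real obstacle here; the lemma is a direct bookkeeping consequence of the equivalence (1) $\Leftrightarrow$ (3) in the preceding proposition, which reduces the colored-endomorphism condition to the combinatorial statement that each summand $V_i$ maps into a single summand $V_{\sigma(i)}$.
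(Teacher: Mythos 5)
Your proof is correct and is essentially identical to the paper's, which consists of exactly the chain of inclusions $(xy)V_i = x(yV_i) \subset xV_{\tau(i)} \subset V_{\sigma(\tau(i))} = V_{\sigma\tau(i)}$. The induction for the $x^k$ claim is the obvious elaboration of what the paper leaves implicit.
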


\begin{proof}
  $(x y) V_i = x (y V_i) \subset x V_{\tau(i)} \subset
  V_{\sigma(\tau(i))} = V_{\sigma \tau (i)}$.
\end{proof}

\begin{proposition} Assume that $W$ is colored and that $x \in \End V$
  is $\sigma$-colored, with $\sigma$ injective.  Then
  \begin{enumerate}
  \item $\ker x$ is colored.
  \item $x^{-1}(W)$ is colored.
  \item $\xi_{\sigma(m)}(x(W)) = \xi_m(W) - \xi_m(\ker x \cap W)$.
  \item $\xi_{\sigma(m)}(W) = \xi_m(x^{-1}(W)) - \xi_m(\ker x)$.
  \end{enumerate}
\end{proposition}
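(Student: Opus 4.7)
The plan is to exploit the injectivity of $\sigma$, which makes $V_{\sigma(1)}, \dots, V_{\sigma(\n)}$ pairwise distinct, so that any expression $xv_1 + \cdots + xv_\n$ with $v_i \in V_i$ is automatically a colored decomposition of its sum.

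For claim (1), if $v = v_1 + \cdots + v_\n \in \ker x$ with $v_i \in V_i$, then $0 = \sum xv_i$ is a sum of terms lying in distinct $V_{\sigma(i)}$, which forces each $xv_i = 0$, i.e., $v_i \in \ker x$. Proposition~\ref{lem:csub}(6) then gives that $\ker x$ is colored. Claim (2) is the same argument applied to $W$ in place of $0$: decompose $v \in x^{-1}(W)$ as $v = \sum v_i$, note that $xv = \sum xv_i \in W$ is a colored decomposition, and use coloredness of $W$ to conclude $xv_i \in W_{\sigma(i)}$, whence each $v_i \in x^{-1}(W)$.

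For (3), coloredness of $W$ gives $W = W_1 \oplus \cdots \oplus W_\n$, and the injectivity of $\sigma$ gives that $x(W) = x(W_1) + \cdots + x(W_\n)$ is a colored sum with $x(W)_{\sigma(m)} = x(W_m)$; hence $\xi_{\sigma(m)}(x(W)) = \dim x(W_m)$. Rank--nullity applied to the restriction $x|_{W_m}\colon W_m \to V_{\sigma(m)}$ rewrites this as $\dim W_m - \dim(\ker x \cap W_m)$, and since $\ker x \cap W$ is colored (as an intersection of colored subspaces), this is exactly $\xi_m(W) - \xi_m(\ker x \cap W)$.

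For (4), since $x^{-1}(W)$ is colored by~(2), $\xi_m(x^{-1}(W)) = \dim(x^{-1}(W) \cap V_m)$; because $x(V_m) \subset V_{\sigma(m)}$, this intersection equals $(x|_{V_m})^{-1}(W_{\sigma(m)})$. The preimage form of rank--nullity then writes it as $\dim \ker(x|_{V_m}) + \dim(W_{\sigma(m)} \cap x(V_m))$, and $\dim \ker(x|_{V_m}) = \xi_m(\ker x)$ since $\ker x$ is colored. The stated identity then follows, with the final step being the identification of $\dim(W_{\sigma(m)} \cap x(V_m))$ with $\xi_{\sigma(m)}(W) = \dim W_{\sigma(m)}$. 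This is the delicate step and I expect it to be the main obstacle: it requires $W_{\sigma(m)} \subset x(V_m)$, which is automatic in the intended applications where $W$ is an $x$-stable subspace arising from the cyclic-quiver setup, but otherwise demands either verification or a reinterpretation of the right-hand side as $\xi_{\sigma(m)}(W \cap \im x)$.
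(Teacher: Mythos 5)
Your treatment of (1)--(3) is essentially the paper's own: the paper proves (2) by exactly your decomposition argument and obtains (1) as the special case $W = 0$, and it proves (3) by rank--nullity applied to $x|_{W_m}$, invoking injectivity of $\sigma$ precisely to identify $x(W) \cap V_{\sigma(m)}$ with $x(W_m)$. For (4) your route differs only superficially --- you apply the preimage form of rank--nullity to $x|_{V_m}$ directly, while the paper applies (3) to the colored subspace $x^{-1}(W)$ and uses $\ker x \cap x^{-1}(W) = \ker x$ --- but the two computations are equivalent. Your flag on the final identification is correct and is the one substantive point here: the paper's own derivation produces $\xi_{\sigma(m)}(x(x^{-1}(W))) = \xi_{\sigma(m)}(W \cap \im x)$ on the left-hand side, so (4) as literally stated requires $W_{\sigma(m)} \subset \im x$, which the hypotheses do not supply (take $x = 0$ and $W = V$; note that this $W$ is even $x$-stable, so your suggestion that stability rescues the statement does not quite work either). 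The honest form of (4) is $\xi_{\sigma(m)}(W \cap \im x) = \xi_m(x^{-1}(W)) - \xi_m(\ker x)$, exactly the reinterpretation you propose, and that is what the paper's argument actually establishes.
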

\begin{proof}
  To prove (2) let $v \in x^{-1}(W)$ and write $v = v_1 + \dots +
  v_\n$ with $v_i \in V_i$.  Then $xv = xv_1 + \dots + xv_\n$ is a
  decomposition with $xv_i \in V_{\sigma(i)}$.  Since $W$ is colored
  and $xv \in W$ we conclude that $x v_i \in W$, hence $v_i \in
  x^{-1}(W)$.  To prove (1) simply apply (2) to $W = 0$.
  
  Formula (3) is a simple application of the rank-nullity theorem to
  $x|_{W_m}$.  Injectivity of $\sigma$ is required to ensure that
  $W_{\sigma(m)} \cap x(W) = x(W_m)$.  Formula (4) is just (3) applied
  to $x^{-1} (W)$.
\end{proof}

\begin{corollary}
  If $x$ is colored and invertible then $x$ is $\sigma$-colored for
  some bijective $\sigma$ and $x^{-1}$ is $\sigma^{-1}$-colored.
\end{corollary}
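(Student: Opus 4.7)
The plan is to bootstrap from the preceding proposition: first produce some function $\sigma$ with $xV_i \subset V_{\sigma(i)}$, then use invertibility of $x$ to upgrade those inclusions to equalities on the indices where $V_i \neq 0$, and finally choose an extension of $\sigma$ to the remaining (free) indices that makes $\sigma$ bijective.

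First, since $x$ is colored, the preceding proposition supplies some $\sigma:\sub\n \to \sub\n$ with $xV_i \subset V_{\sigma(i)}$. Put $J = \{i \mid V_i \neq 0\}$. For $i \in J$ the value $\sigma(i)$ is uniquely forced (and lies in $J$, since $x$ injective implies $xV_i \neq 0$); for $i \notin J$ the value $\sigma(i)$ is free, as noted in the discussion following the proposition.

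The main step is to prove $\sigma|_J:J \to J$ is a bijection. Because $x$ is invertible, the subspaces $xV_1, \dots, xV_\n$ form a direct-sum decomposition of $V$ with $\dim xV_i = \dim V_i$. Grouping these summands by their $\sigma$-image yields, for each $j$, an inclusion of colored subspaces
\[
\bigoplus_{i \in \sigma^{-1}(j)} xV_i \;\subset\; V_j.
\]
Summing dimensions over $j$ produces $\dim V$ on both sides, so by lemma~\ref{lem:sums} each of these inclusions is an equality. In particular, $V_j \neq 0$ forces $\sigma^{-1}(j) \cap J \neq \emptyset$, so $\sigma|_J:J\to J$ is surjective and hence bijective.

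Finally, I would redefine $\sigma$ on $J^c$ to be any bijection $J^c \to J^c$; this is harmless because $xV_i = 0$ for $i \notin J$, and it makes $\sigma$ bijective on all of $\sub\n$ while preserving $xV_i \subset V_{\sigma(i)}$. The equality $xV_i = V_{\sigma(i)}$ (nontrivial on $J$, trivial on $J^c$ since both sides vanish) then inverts to $x^{-1} V_{\sigma(i)} = V_i = V_{\sigma^{-1}(\sigma(i))}$, which is exactly the statement that $x^{-1}$ is $\sigma^{-1}$-colored. The only real obstacle is the bookkeeping around empty color spaces, which is why the bijectivity argument must separate the forced values on $J$ from the free values on $J^c$.
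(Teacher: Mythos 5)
Your proof is correct and takes essentially the same route as the paper: both use invertibility of $x$ together with a dimension count to show that $\sigma$ permutes the indices $i$ with $V_i \neq 0$, and then extend $\sigma$ arbitrarily (but bijectively) on the indices with $V_i = 0$. The only difference is organizational --- the paper runs a descending induction starting from a color of maximal dimension, while you carry out a single global count via the decomposition $V = \bigoplus_i xV_i$, and you make the final inversion step $xV_i = V_{\sigma(i)} \Rightarrow x^{-1}V_{\sigma(i)} = V_i$ explicit where the paper defers to part (2) of the preceding proposition.
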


\begin{proof}
  Let $\dim V_{i_0}$ be maximal.  Since $x$ is invertible,
  $\sigma(i_0)$ is well-defined and $\dim x(V_{i_0}) = \dim V_{i_0}$.
  But $x (V_{i_0}) \subset V_{\sigma(i_0)}$ and $\dim V_{i_0}$ is
  maximal, so $\dim V_{\sigma(i_0)} = \dim V_{i_0}$.  Inductively, if
  $V_i \neq 0$ then $\dim V_{\sigma(i)} = \dim V_i$.  If $V_i = 0$
  then we may choose $\sigma(i) = i$.  Invertibility of $x$ guarantees
  that $\sigma$ is invertible and the rest follows from (2).
\end{proof}

\begin{lemma}
  If $x$ is $\sigma$-colored and $A \subset V$ is an $x$-stable
  colored subspace then $x|_A$ is $\sigma$-colored relative to $(A,
  A_1, \dots, A_\n)$.  The quotient endomorphism $\overline x: V / A
  \to V / A$ is well-defined and is $\sigma$-colored relative to $(V /
  A, V_1 / A_1, \dots, V_\n / A_\n)$.
\end{lemma}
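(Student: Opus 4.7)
The plan is that both assertions amount to verifying the defining inclusion $y(W_i) \subset W_{\tau(i)}$ for the appropriate colored structure on $W$, and both reduce quickly to what is already known about $x$ and $A$.

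For the restriction, I would first note that since $A$ is colored, Proposition \ref{lem:csub} gives $A = A_1 + \cdots + A_\n$ with $A_i = A \cap V_i$, so $(A, A_1, \dots, A_\n)$ is itself a colored vector space. The restriction $x|_A$ is a well-defined endomorphism of $A$ by $x$-stability. To check that it is $\sigma$-colored, I would verify $x(A_i) \subset A_{\sigma(i)}$ via the two simultaneous containments $x(A_i) \subset x(V_i) \subset V_{\sigma(i)}$ (because $x$ is $\sigma$-colored on $V$) and $x(A_i) \subset x(A) \subset A$ (because $A$ is $x$-stable). Intersecting gives $x(A_i) \subset A \cap V_{\sigma(i)} = A_{\sigma(i)}$, as required.

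For the quotient, the colored vector space structure on $(V / A, V_1/A_1, \dots, V_\n/A_\n)$ was already established in the preceding lemma, with the understanding (noted there) that $V_i / A_i$ is identified with $(V_i + A)/A$ inside $V/A$. The map $\overline{x}$ is well-defined on $V/A$ by the standard argument using $x(A) \subset A$. To verify $\sigma$-coloring, I would take a typical element $v_i + A \in (V_i + A)/A$ with $v_i \in V_i$ and compute $\overline{x}(v_i + A) = x(v_i) + A$; since $x(v_i) \in V_{\sigma(i)}$, this class lies in $(V_{\sigma(i)} + A)/A$, which is the $\sigma(i)$-component of $V/A$. Hence $\overline{x}((V/A)_i) \subset (V/A)_{\sigma(i)}$.

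There is no real obstacle here; both parts are mechanical once Proposition \ref{lem:csub} and the preceding quotient lemma are in hand. The only subtlety worth flagging explicitly, as the paper itself does, is the identification of $V_i/A_i$ with $(V_i + A)/A$ when interpreting elements of the quotient colored space.
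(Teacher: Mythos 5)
Your proof is correct and is exactly the routine verification the paper leaves implicit (no proof is given in the source, consistent with the section's stated policy of omitting elementary arguments). Both the intersection argument $x(A_i)\subset x(V_i)\cap x(A)\subset V_{\sigma(i)}\cap A=A_{\sigma(i)}$ and the quotient computation via the identification $V_i/A_i\cong (V_i+A)/A$ are the intended steps, so there is nothing to add.
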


From this point on, we will assume that $\sigma(\n) = 1$ and
$\sigma(i) = i + 1$ for $i \neq\n$, so $\sigma$ is the cyclic graph of
order $\n$.  With this assumption, we suppress the dependence on
$\sigma$ and write $\nil = \nil_\sigma$.  When we say that an
endomorphism is colored, we will just assume that it is
$\sigma$-colored.  We call $\nil$ the \emph{colored nilpotent cone} of
$V$.  The natural action of $K$ on $V$ induces a change-of-basis
(conjugation) action on $\nil$.  We wish to classify the set $K
\backslash \nil$ of $K$-orbits on $\nil$.  That is, if $\orb_x = K
\cdot x$ is the orbit that contains $x$ and $y \in \nil$ is arbitrary,
we seek simple criteria for determining if $y \in \orb_x$.

For ease of notation, we think of the set $\sub\n$ of colors as the
group $\zmod$, so $\sigma(i) = i + [1]$.  As was mentioned in the
subsection on notation, we will choose $0$ as the preferred
representative of $[\n]$.

\section{The colored nilpotent cone} \label{sec:nil}

In this section we introduce the concept of a colored Jordan basis for
a colored nilpotent endomorphism of $V$.  This immediately leads to
the notion of a colored partition.  We show that the colored Jordan
basis gives a bijection between $K \backslash \nil$ and an appropriate
set of colored partitions.

\subsection{Colored Jordan bases}
\begin{definition}
  If $x \in \End(V)$ and $W \subset V$ is any nonempty subset then we
  say that $W$ is $x$-\emph{stable} if $x (W) \subset \left(W \cup
    \{0\}\right)$.
\end{definition}

Note that if $W$ is a subspace (or any other set containing $0$) then
$W$ is $x$-stable if and only if $x(W) \subset W$.

\begin{definition}
  If $x \in \End(V)$ is nilpotent then a \emph{Jordan basis for $x$}
  is an $x$-stable basis of $V$ that contains a basis of $\ker x$.
\end{definition}

\begin{definition}
  A \emph{partition} is a function $\lambda: \setN \to \nneg$ such
  that $\lambda_i \geq \lambda_{i+1}$ for each $i$ and $\lambda_i = 0$
  for some $i$.  We define the \emph{size} of $\lambda$ by $|\lambda|
  = \sum_{i = 1}^\infty \lambda_i$, a sum that is clearly finite, and
  the \emph{length} of $\lambda$ by $l(\lambda) = \# \{i \in \setN
  \mid \lambda_i > 0\}$.
\end{definition}

\begin{lemma} \label{lem:jordan} A basis $\catB$ of $V$ is a Jordan
  basis for a nilpotent $x \in
  \End(V)$ if and only if there is a (necessarily unique) partition
  $\lambda$ with $|\lambda| = \dim V$ such that the elements of
  $\catB$ can be labeled $v_{i,j}$ with the following properties:
  \begin{enumerate}
  \item $1 \leq i \leq l(\lambda)$,
  \item $1 \leq j \leq \lambda_i$,
  \item If $j > 1$ then $xv_{i, j} = v_{i, j - 1}$,
  \item $x v_{i, 1} = 0$,
  \item $l(\lambda) = \dim \ker x$.
  \end{enumerate}
\end{lemma}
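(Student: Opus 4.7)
The plan is to prove both directions of the equivalence.

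The ``if'' direction is routine. If $\catB$ admits a labeling as described, then conditions (3) and (4) give $x(\catB) \subset \catB \cup \{0\}$, so $\catB$ is $x$-stable. The vectors $\{v_{i,1} : 1 \leq i \leq l(\lambda)\}$ lie in $\ker x$ by (4), are linearly independent (as a subset of a basis), and number $\dim \ker x$ by (5), hence they form a basis of $\ker x$ contained in $\catB$.

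For the ``only if'' direction, let $\catB$ be a Jordan basis. The strategy is to organize $\catB$ into $x$-chains. First I would observe that $\catB \cap \ker x$ is itself a basis of $\ker x$: the containment of \emph{some} $\ker x$-basis in $\catB$ forces any additional vector of $\catB \cap \ker x$ to yield a nontrivial dependence among basis elements. Then I would consider the directed graph on $\catB$ with an edge $v \to xv$ whenever $xv \neq 0$; $x$-stability keeps edges in $\catB$, and nilpotency prevents cycles. Each vertex obviously has at most one outgoing edge. The main obstacle is ruling out two edges $u_1, u_2 \to v$ with $u_1 \neq u_2$, so that the graph becomes a disjoint union of linear chains. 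This follows because $u_1 - u_2 \in \ker x$ would then lie in the span of $\catB \cap \ker x$, producing a nontrivial linear relation among distinct elements of $\catB$, a contradiction.

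Once the chain decomposition is established, each chain terminates (reading along $x$) at a unique element of $\catB \cap \ker x$. Let $\lambda_1 \geq \lambda_2 \geq \dots$ be the chain lengths arranged in nonincreasing order, padded by zeros. Then $\lambda$ is a partition with $|\lambda| = |\catB| = \dim V$, and $l(\lambda)$ equals the number of chains, which equals $|\catB \cap \ker x| = \dim \ker x$. Labeling the $i$-th chain so that $v_{i,1}$ is its terminal (kernel) element and $xv_{i,j} = v_{i,j-1}$ for $j > 1$ gives properties (1)--(5). Uniqueness of $\lambda$ is immediate since the multiset of chain lengths is intrinsically determined by $\catB$ and the action of $x$, and a partition is uniquely specified by this multiset.
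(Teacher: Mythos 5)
Your proof is correct and follows essentially the same route as the paper: decompose the Jordan basis into $x$-chains and read off the partition from the chain lengths (the paper indexes chains by their sources $\catB \setminus x\catB$, you by their termini in $\catB \cap \ker x$, which is the same decomposition viewed from the other end). You also carefully justify the disjointness of the chains (in-degree at most one) and the identification $\catB \cap \ker x = $ basis of $\ker x$, details the paper leaves implicit.
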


\begin{proof}
  Assume that $\catB$ is a Jordan basis for $x$.  Since $x$ is
  nilpotent, there is some $v \in \catB$ with $xv = 0$.  By
  cardinality it cannot be the case that $x: \catB \to \catB \cup
  \{0\}$ is surjective, hence $\catB \setminus x\catB$ is nonempty.
  Let $v_1, \dots, v_r$ be the elements of $\catB \setminus x\catB$.
  Set $\lambda_i = \min \{k \mid x^kv_i = 0\}$.  By reordering, we may
  assume that $\lambda_i \geq \lambda_{i+1}$.  Set $v_{i, \lambda_i} =
  v_i$ and $v_{i, j} = x^{\lambda_i - j} v_{i, \lambda_i}$.
  Uniqueness of $\lambda$ and the reverse implication should be clear,
  for if $\lambda^t$ is the transpose partition then $\sum_{i=1}^k
  \lambda_i^t = \dim \ker x^k$.
\end{proof}

These properties of Jordan bases, as well as several that follow, are
classical; the important fact is that we can treat Jordan bases in the
usual way, even when we make the additional assumption that the basis
is colored.  Colored Jordan bases will be central to many of the
constructions we present throughout this paper.

\begin{lemma}
  Let $x \in \nil$ and assume that $A, B$ are $x$-stable colored
  subspaces of $V$ with $A \cap B = 0$.  If $\mathcal A, \catB$ are
  colored Jordan bases for $x|_A$ and $x|_B$, respectively, then
  $\mathcal A \cup \catB$ is a colored Jordan basis for $x|_{A \oplus
    B}$.
\end{lemma}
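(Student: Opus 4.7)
The plan is to verify directly the three defining properties of a colored Jordan basis for $x|_{A \oplus B}$: that $\mathcal A \cup \catB$ is a basis of $A \oplus B$ consisting of colored vectors, that it is $x$-stable, and that it contains a basis of $\ker(x|_{A \oplus B})$.

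First I would handle the basis and coloring issues together. Since $A \cap B = 0$ and $\mathcal A, \catB$ are bases of $A$ and $B$ respectively, $\mathcal A \cup \catB$ is a basis of $A \oplus B$. Each of $\mathcal A$ and $\catB$ is colored by hypothesis, so their union is colored as well. This step is essentially bookkeeping.

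Next I would check $x$-stability. By the remark following the definition of $x$-stable (applied to the set $\mathcal A \cup \catB$, which may not contain $0$), I need $x(\mathcal A \cup \catB) \subset (\mathcal A \cup \catB) \cup \{0\}$. Since $\mathcal A$ is a Jordan basis for $x|_A$, we have $x(\mathcal A) \subset \mathcal A \cup \{0\}$, and similarly for $\catB$. Taking unions gives the desired inclusion.

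The last and only substantive step is to show that $\mathcal A \cup \catB$ contains a basis of $\ker(x|_{A \oplus B})$. The key claim is that $\ker(x|_{A \oplus B}) = \ker(x|_A) \oplus \ker(x|_B)$. The inclusion $\supset$ is immediate. For $\subset$, take $a + b \in \ker(x|_{A \oplus B})$ with $a \in A$ and $b \in B$; then $xa + xb = 0$, and since $A, B$ are $x$-stable we have $xa \in A$ and $xb \in B$, so $A \cap B = 0$ forces $xa = xb = 0$. Now $\mathcal A$ contains a basis of $\ker(x|_A)$ and $\catB$ contains a basis of $\ker(x|_B)$, and these bases are disjoint (being subsets of $A$ and $B$ respectively, whose intersection is $0$), so their union is a basis of $\ker(x|_A) \oplus \ker(x|_B) = \ker(x|_{A \oplus B})$. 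I do not expect any real obstacle here; the only point requiring attention is using $x$-stability of $A$ and $B$ to split the kernel cleanly, which is precisely where the direct sum hypothesis is essential.
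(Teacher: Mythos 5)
Your proof is correct. The paper states this lemma without proof, and your argument is exactly the direct verification intended: the basis and coloring claims are immediate from $A \cap B = 0$, $x$-stability passes to the union, and the kernel splits as $\ker(x|_{A \oplus B}) = \ker(x|_A) \oplus \ker(x|_B)$ precisely because $x$-stability of $A$ and $B$ puts $xa$ and $xb$ in complementary subspaces.
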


\begin{lemma}
  Let $\catB$ be a colored Jordan basis for $x \in \nil$ and let
  $\catA \subset \catB$ be $x$-stable.  If $A = \mathop{Span} \catA$
  then
  \begin{enumerate}
  \item $A$ is $x$-stable and colored;
  \item $\catA$ is a colored Jordan basis for $x|_A$.
  \item $\catB \setminus \catA$ is a colored Jordan basis for $x|_{V /
      A}$.  That is, $\{a + A \mid a \in \catB \setminus \catA\}$ is a
    colored Jordan basis for $x|_{V / A}$.
  \end{enumerate}
\end{lemma}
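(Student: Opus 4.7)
The plan is to impose the labelling of lemma \ref{lem:jordan} on $\catB$ and extract a single structural fact about $\catA$ from which all three claims follow by routine bookkeeping. Applying lemma \ref{lem:jordan} gives a partition $\lambda$ and a labelling $\catB = \{v_{i,j} : 1 \leq i \leq l(\lambda),\ 1 \leq j \leq \lambda_i\}$ with $xv_{i,j} = v_{i,j-1}$ for $j > 1$ and $xv_{i,1} = 0$. The key observation is that the hypothesis ``$\catA$ is $x$-stable'' forces $\catA$ to be a union of \emph{initial} Jordan-chain segments: if $v_{i,j} \in \catA$ with $j > 1$ then $v_{i,j-1} = xv_{i,j}$ is a \emph{nonzero} image, so by $x$-stability must lie in $\catA$ itself. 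Consequently there are integers $0 \leq \mu_i \leq \lambda_i$ with $\catA \cap \{v_{i,j}\}_j = \{v_{i,1}, \ldots, v_{i,\mu_i}\}$.

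For (1), $\catA \subset \catB$ is colored, so $A = \mathop{Span} \catA$ has a colored basis and is colored by proposition \ref{lem:csub}; $x$-stability of $A$ is immediate by linearity. For (2), $\catA$ is a colored basis of $A$ and is $x|_A$-stable; after reindexing so that the $\mu_i$ are weakly decreasing, the labelling inherited from $\catB$ satisfies the first four properties of lemma \ref{lem:jordan} for $x|_A$ with partition $\mu$. The remaining property is that $\{v_{k,1} : \mu_k \geq 1\} \subset \catA$ spans $\ker(x|_A) = \ker x \cap A$, which holds because $\{v_{k,1}\}_k$ spans $\ker x$ in $V$.

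For (3), set $\catC = \catB \setminus \catA = \{v_{i,j} : \mu_i < j \leq \lambda_i\}$ and $\overline{v}_{i,j} = v_{i,j} + A$. The earlier lemma on quotient endomorphisms shows $\overline{x}$ is colored on $(V/A, V_1/A_1, \ldots, V_\n/A_\n)$, and each $\overline{v}_{i,j}$ is colored because $v_{i,j}$ is. A standard dimension count shows $\{\overline{v}_{i,j} : v_{i,j} \in \catC\}$ is a basis of $V/A$. The chain structure transfers cleanly: $\overline{x}(\overline{v}_{i,j}) = \overline{v}_{i,j-1}$, which equals $0$ exactly when $j = \mu_i + 1$ (since $v_{i,\mu_i} \in A$) and otherwise lies in our set. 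The main point requiring verification---and the only place where a short calculation is genuinely needed---is that $\ker \overline{x}$ is spanned by the new bottoms $\{\overline{v}_{i,\mu_i+1} : \mu_i < \lambda_i\}$; this is checked by writing $v \in V$ with $xv \in A$ in the basis $\catB$ and using that $A$ is spanned by $\{v_{k,\ell} : \ell \leq \mu_k\}$. Lemma \ref{lem:jordan} applied in $V/A$ then certifies the colored Jordan basis property, so no substantial conceptual obstacle remains.
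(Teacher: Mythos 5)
Your proof is correct. The paper states this lemma without proof, so there is nothing to compare against line by line, but your argument is clearly the intended one: the single structural fact that $x$-stability of $\catA$ forces it to consist of initial segments $\{v_{i,1},\dots,v_{i,\mu_i}\}$ of the Jordan chains does all the work, and your verifications of the kernel conditions --- that $\ker(x|_A)$ is spanned by the $v_{k,1}$ with $\mu_k\geq 1$, and that $x^{-1}(A)=A\oplus\Span{\{v_{i,\mu_i+1}\mid \mu_i<\lambda_i\}}$ so that $\ker\overline{x}$ is spanned by the new chain bottoms --- are exactly the computations needed to certify the Jordan basis property in $A$ and in $V/A$. The colored assertions follow as you say from proposition \ref{lem:csub} and the quotient lemma, so the proof is complete.
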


\subsection{Colored partitions}

In the same way that a Jordan basis naturally leads to a partition, a
colored Jordan basis naturally leads to a colored partition.  Suppose
that $x \in \nil$ has a Jordan basis $\catB = \{v_{i,j}\}$, labeled as
in lemma \ref{lem:jordanlabel}, that is colored.  From definition
\ref{def:colorfunction} we have the color function $\chi$, whose
codomain we now think of as $\zmod$.  If $0 < j < \lambda_i$ then
$\chi(v_{i,j}) = \chi(x v_{i,j+1}) = \chi(v_{i,j+1}) + [1]$.
Inductively, then, $\chi(v_{i,j}) = \chi(v_{i, \lambda_i}) +
[\lambda_i - j]$.  This equation shows that $\chi(v_{i,j})$ is
completely determined by the pair $(\lambda, \epsilon)$, where
$\epsilon_i = \chi(v_{i, \lambda_i})$ whenever $1 \leq i \leq
l(\lambda)$.  Note that if $\lambda_i = \lambda_j$ and $\epsilon_i
\neq \epsilon_j$ then we can interchange the roles of $i$ and $j$,
obtaining a new labeling of the same basis.  This leads to the
following definition.

\begin{definition}
  A $k$-\emph{colored partition} is a pair $(\lambda, \epsilon)$,
  where $\lambda$ is a partition and $\epsilon: \setN \to \zmod[k]$ is
  a function such that for each $m \in \zmod[k]$ there are infinitely
  many $i$ with $\epsilon_i = m$.  If $i \in \setN$ then the pair
  $(\lambda_i, \epsilon_i)$ is the $i$th \emph{row} of $(\lambda,
  \epsilon)$ and this row has \emph{length} $\lambda_i$ and
  \emph{color} $\epsilon_i$.  Two $k$-colored partitions are
  \emph{equivalent} if one can be obtained from the other by permuting
  rows of the same length.  The \emph{size} and \emph{length} of
  $(\lambda, \epsilon)$ are inherited from $\lambda$.
\end{definition}

The requirement that there are infinitely many $i$ with $\epsilon_i =
m$ is a technical convention whose main consequence is to make certain
constructions notationally easier.  It also ensures that there are
only finitely many equivalence classes of colored partitions of a
given size.  It also means that in most settings we can disregard the
value of $\epsilon_i$ if $\lambda_i = 0$, thinking of $(\lambda,
\epsilon)$ as a pair of finite tuples.  As $n$ is distinguished
throughout this paper, we may refer to an $\n$-colored partition as
simply a ``colored partition.''

We visualize a colored partition by drawing the (left-justified) Young
diagram for $\lambda$ and labeling the rightmost box in row $i$ with
$\epsilon_i$.  Labels then increase by $1 \pmod \n$ from right to left
across rows, so the color of the box in row $i$ (counting from the
top) and column $j$ (counting from the left) is given by $\epsilon_i +
[\lambda_i - j]$.  It is clear that the construction works in reverse:
each diagram constructed in this way comes from a unique colored
partition.  Two of these \emph{colored Young diagrams} are equivalent
if one can be obtained from the other by reordering rows of the same
length.

\begin{definition}
  The \emph{signature} of a colored partition $(\lambda, \epsilon)$ is
  the function $\xi(\lambda, \epsilon): \zmod \to \nneg$ defined by
  $\xi_m(\lambda, \epsilon) = \#\{(i, j) \mid 1 \leq j \leq \lambda_i,
  \epsilon_i + [\lambda_i - j] = m \}$.  For a fixed signature $f$ let
  $\catP_f$ denote the (finite) set of equivalence classes of colored
  partitions of signature $f$.  When writing it down, we may think of
  $\xi(\lambda, \epsilon)$ as the tuple $(\xi_0 (\lambda, \epsilon),
  \dots, \xi_{\n - 1}(\lambda, \epsilon))$.
\end{definition}

\begin{figure}[h]
  \caption{A $3$-colored partition of signature $(6, 7, 5)$.  In this
    example we have $\lambda = (5, 4, 4, 2, 2, 1)$ and $\epsilon = (0,
    0, 2, 1, 0, 1)$.  }
  \begin{center}
    \includegraphics{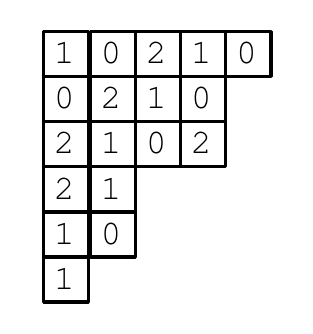}
  \end{center}
\end{figure}

\begin{definition}
  Let $(\lambda, \epsilon)$ be a colored partition with $\xi(\lambda,
  \epsilon) = \xi(V)$.  We say that a basis $\catB = \{v_{i,j}\}$ of
  $V$ is of type $(\lambda, \epsilon)$ if for each $v_{i,j} \in \catB$
  we have
  \begin{enumerate}
  \item $1 \leq i \leq l(\lambda)$,
  \item $1 \leq j \leq \lambda_i$,
  \item $\chi(v_{i,j}) = \epsilon_i + [\lambda_i - j]$.
  \end{enumerate}
\end{definition}

Strictly speaking, it is the \emph{labeled} set $\catB$ that is of
type $(\lambda, \epsilon)$.  However, the terminology given has the
advantage of brevity.  The colored Young diagram is a convenient way
to visualize $\catB$.  The coordinate $(i,j)$ gives a color-preserving
bijection between the boxes of the diagram and the elements of
$\catB$, so we may think of the boxes as elements of $\catB$.  If
$\catB$ happens to be a Jordan basis of $x \in \nil$ then we can
visualize the action of $x$ as sending each box to the one immediately
to its left.  Boxes in the leftmost column are sent to zero.

\begin{lemma} \label{lem:partmap} If $(\lambda, \epsilon)$ is a
  colored partition of signature $\xi(V)$ then
  \begin{enumerate}
  \item If $\catB$ is any colored basis of $V$ then the elements of
    $\catB$ can be labeled $v_{i,j}$ to make $\catB = \{v_{i,j}\}$ a
    basis of type $(\lambda, \epsilon)$;
  \item If $\catB = \{v_{i,j}\}$ is a basis of type $(\lambda,
    \epsilon)$ then $\catB$ is a colored Jordan basis for $x$, where
    $x$ is the colored endomorphism of $V$ defined by \[x v_{i,j} =
    \begin{cases} 0 & j = 1, \\ v_{i, j-1} & j > 1;
    \end{cases}\]
  \item If $x$ is defined as in $(2)$ and we define $\orb_{\lambda,
      \epsilon} = \orb_x$ then $(\lambda, \epsilon) \mapsto
    \orb_{\lambda, \epsilon}$ is a well-defined map from
    $\catP_{\xi(V)}$ into $\knil$.
  \end{enumerate}
\end{lemma}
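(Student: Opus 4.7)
The plan is to handle the three assertions in order; parts (1) and (2) amount to bookkeeping against the definitions, while part (3) requires verifying two distinct invariances.

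For (1), I observe that the hypothesis $\xi(\lambda,\epsilon) = \xi(V)$ says precisely that for each $m \in \zmod$ the set of pairs $\{(i,j) : 1 \leq j \leq \lambda_i,\ \epsilon_i + [\lambda_i - j] = m\}$ has cardinality $\dim V_m$, and the fact that $\catB$ is a colored basis says $\catB \cap V_m$ also has cardinality $\dim V_m$. I simply fix, for each $m$, any bijection between these two sets, and read off the labeling $v_{i,j}$. By construction $\chi(v_{i,j}) = \epsilon_i + [\lambda_i - j]$, which is what is required.

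For (2), since $\catB$ is a basis, the prescription in the statement extends uniquely to a linear map $x$. To see $x$ is $\sigma$-colored, note that if $j > 1$ then the color of $v_{i,j-1}$ is $\epsilon_i + [\lambda_i - j + 1] = \sigma(\chi(v_{i,j}))$, so $x$ carries $V_m$ into $V_{\sigma(m)}$; the case $j = 1$ is trivial. Nilpotency is immediate since $x$ lowers the second index by one. Finally, the labeling of $\catB$ satisfies the five conditions of Lemma \ref{lem:jordan} by construction (with $\{v_{i,1}\}$ playing the role of a basis for $\ker x$), so $\catB$ is a Jordan basis, and it is colored by hypothesis.

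For (3), I must show that the orbit $\orb_x$ is independent of both the chosen basis of type $(\lambda,\epsilon)$ and the chosen representative of the equivalence class. For the basis, given two such bases $\{v_{i,j}\}$ and $\{v'_{i,j}\}$, both are colored bases of $V$ with corresponding elements of the same color, so Proposition \ref{lem:K-basis} furnishes an element $k \in K$ sending one to the other. Checking on basis vectors, the two endomorphisms produced by the construction are related by conjugation by $k$, so they share an orbit. For the representative, if $(\lambda, \epsilon) \sim (\lambda', \epsilon')$ via a permutation $\pi$ that permutes only rows of equal length, then relabeling $v'_{i,j} := v_{\pi(i), j}$ turns any basis of type $(\lambda, \epsilon)$ into a basis of type $(\lambda', \epsilon')$; since the formula defining $x$ depends only on the second index $j$, the endomorphism itself is literally unchanged. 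Thus the map $(\lambda,\epsilon) \mapsto \orb_{\lambda, \epsilon}$ descends to $\catP_{\xi(V)}$.

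The entire lemma is a direct unwinding of definitions; the only mild care needed is to keep the two sources of ambiguity in part (3) conceptually separate, and to apply Proposition \ref{lem:K-basis} rather than re-proving change-of-colored-basis from scratch.
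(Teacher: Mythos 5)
Your proof is correct; the paper in fact omits any proof of this lemma, treating it as a routine unwinding of the definitions, and your argument is exactly the intended verification: a counting bijection for (1), a direct check of the coloring, nilpotency, and Jordan conditions for (2), and the two separate invariances (change of colored basis via Proposition \ref{lem:K-basis}, and row permutation leaving $x$ literally unchanged) for (3). No gaps.
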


We will see in the next section that the map $(\lambda, \epsilon)
\mapsto \orb_{\lambda, \epsilon}: \catP_{\xi(V)} \to \knil$ is a
bijection.  For now, we observe that the boxes in the leftmost column
of the colored Young diagram form a basis of $\ker x$.  Similarly, the
boxes in the first $k$ columns form a basis of $\ker x^k$.  We define
$s_k(x) = \xi(\ker x^k)$, the signature of the first $k$ columns of
the colored Young diagram corresponding to $x$.  The signatures $s_k$
are important combinatorial data that will be seen to completely
characterize orbits.

More generally, let $\lambda: \setN \to \nneg$ be any function with
finite support and let $\epsilon$ be as in the definition above.  We
can similarly visualize $(\lambda, \epsilon)$, though the rows may not
be in descending order and there may be gaps to indicate $i$ with
$\lambda_i = 0$.  The group of permutations of $\setN$ acts on the set
of such pairs $(\lambda, \epsilon)$ by $\sigma \cdot (\lambda,
\epsilon) = (\lambda \circ \sigma^{-1}, \epsilon \circ \sigma^{-1})$.
Each orbit of this action contains a colored partition and if
$\lambda$ and $\lambda \circ \sigma$ are both partitions then $\lambda
\circ \sigma = \lambda$.  In other words, if two colored partitions
are in the same orbit then one can be transformed into the other by
reordering rows of the same length.  Therefore, each orbit contains a
unique equivalence class of colored partitions.

While there is no need to introduce this level of generality here,
certain constructions later are simpler in this context.  They will
begin with a colored partition and produce an object that may not be a
colored partition but is equivalent to a colored partition.  The
description above gives us a well-defined (up to equivalence) way of
building a colored partition from such an object.

\begin{lemma} \label{lem:sig} If $(\lambda, \epsilon)$ is a colored
  partition and $m \in \zmod$ then
  \[\xi_m(\lambda, \epsilon) = \sum_{i=1}^\infty \ceil{\frac
    {\lambda_i - \rep{m - \epsilon_i}} \n},\] a formula that is
  invariant under the action of each permutation $\sigma$ of $\setN$.
\end{lemma}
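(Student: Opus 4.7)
The plan is to unfold the definition of $\xi_m(\lambda, \epsilon)$ and count the contribution from each row $i$ independently. By definition,
\[
\xi_m(\lambda, \epsilon) = \sum_{i = 1}^\infty \#\{j \mid 1 \leq j \leq \lambda_i,\ \epsilon_i + [\lambda_i - j] = m\},
\]
so it suffices to prove that the $i$th summand equals $\ceil{(\lambda_i - \rep{m - \epsilon_i})/\n}$. Fix $i$ and set $r = \rep{m - \epsilon_i}$, so that $0 \leq r < \n$. The condition $\epsilon_i + [\lambda_i - j] = m$ is equivalent to the congruence $\lambda_i - j \equiv r \pmod \n$, together with the inequality $0 \leq \lambda_i - j \leq \lambda_i - 1$ coming from the range of $j$. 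Writing $\lambda_i - j = r + \n k$ for $k \in \nneg$, the admissible $j$'s are in bijection with the integers $k \geq 0$ satisfying $\n k \leq \lambda_i - 1 - r$.

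First I would handle the case $\lambda_i \geq r$, where the number of such $k$ is $\floor{(\lambda_i - 1 - r)/\n} + 1$; an elementary case split depending on whether $\n$ divides $\lambda_i - r$ shows this equals $\ceil{(\lambda_i - r)/\n}$. Then I would handle the degenerate case $\lambda_i < r$, where no $j$ contributes; since $-\n < \lambda_i - r < 0$, the ceiling $\ceil{(\lambda_i - r)/\n}$ is $0$, matching the count. Combining these cases gives the claimed formula.

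For permutation invariance, observe that each term of the sum depends only on the pair $(\lambda_i, \epsilon_i)$. If $\sigma$ is a permutation of $\setN$ and we replace $(\lambda, \epsilon)$ by $(\lambda \circ \sigma^{-1}, \epsilon \circ \sigma^{-1})$, the resulting series has exactly the same terms in a different order; since all terms are nonnegative and only finitely many are nonzero (because $\lambda$ has finite support), the sum is unchanged.

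The main obstacle is really just the careful bookkeeping of the ceiling function across the two cases $\lambda_i \geq r$ and $\lambda_i < r$; once the bijection between admissible $j$'s and nonnegative $k$'s with $\n k \leq \lambda_i - 1 - r$ is in hand, everything else is a direct verification.
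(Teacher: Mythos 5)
Your proof is correct. The paper states Lemma \ref{lem:sig} without proof, and your argument is exactly the verification one would expect: reduce to a single row, translate $\epsilon_i + [\lambda_i - j] = m$ into the congruence $\lambda_i - j \equiv \rep{m-\epsilon_i} \pmod \n$, count the solutions in the range $0 \leq \lambda_i - j \leq \lambda_i - 1$ via the identity $\floor{(s-1)/\n} + 1 = \ceil{s/\n}$, and note that permuting rows merely rearranges a sum of nonnegative integers with finitely many nonzero terms.
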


\subsection{The colored Jordan normal form}

\begin{definition}
  Fix $(v, x) \in V \times \End (V)$.  We write $\setF[x](v)$ to
  denote the smallest $x$-stable subspace of $V$ containing $v$.  If
  $x^k v = 0$ for some $k \in \nneg$, let $d_x(v)$ be the smallest
  such $k$.
\end{definition}

\begin{lemma} \label{lem:jordanlabel} Let $(v,x) \in V$ satisfy $x^k v
  = 0$ for some $k \in \setN$.  If $\catB_{v, x} = \{x^k v \mid 0 \leq
  k < d_x(v) \}$ then
  \begin{enumerate}
  \item $\catB_{v, x}$ is a Jordan basis for $x |_{\setF[x](v)}$, so
    $d_x(v) = \dim \setF[x](v) \leq \dim V$.
  \item $\setF[x](w) \subset \setF[x](v)$ if and only if $w \in
    \setF[x](v)$.
  \item The $x$-stable subspaces of $\setF[x](v)$ are precisely $x^i
    \setF[x](v) = \setF[x](x^iv) = \ker (x^{d_x(v) - i}) \cap
    \setF[x](v), 0 \leq i \leq d_x(v)$, with $\dim \setF[x](x^iv) =
    d_x(v) - i$.
  \item If $v$ is colored then $\catB_{v, x}$ is colored, so
    $\setF[x](v)$ is colored.
  \item If $\setF[x](v)$ is colored then there is a colored vector $w$
    such that $\setF[x](w) = \setF[x](v)$.  If $w'$ is another such
    vector then $\chi(w) = \chi(w')$.
  \item If $w \in \setF[x](v)$ then there exists $v' \in V$ such that
    $\setF[x](v') = \setF[x](v)$ and $w \in \catB_{v', x}$.  If
    $\setF[x](v)$ is colored and $w$ is colored then we may choose
    $v'$ to be colored.
  \end{enumerate}
\end{lemma}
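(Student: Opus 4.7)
The plan is to prove the six items in the order given, with each item relying on the preceding ones where possible. Items (1)--(3) are essentially the classical structure theory of a cyclic $x$-module, adapted to our setting, so I would dispense with these first. For (1), I would note that any dependence $\sum_{k=0}^{d_x(v)-1} c_k x^k v = 0$ can be contradicted by applying a suitable power of $x$ (take the smallest index $k_0$ with $c_{k_0}\neq 0$ and apply $x^{d_x(v)-1-k_0}$), so $\catB_{v,x}$ is linearly independent; on the other hand $\Span \catB_{v,x}$ is $x$-stable, contains $v$, and is contained in every $x$-stable subspace containing $v$, giving the identification with $\setF[x](v)$. For (2) the ``if'' direction is the minimality property of $\setF[x](w)$ and the ``only if'' is that $w\in\setF[x](w)$. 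For (3), given any nonzero $w\in\setF[x](v)$ I would write $w = \sum_{j\geq a} c_j x^jv$ with $c_a\neq 0$, compute $d_x(w) = d_x(v) - a$, and use a dimension count (as in (1)) to get $\setF[x](w) = x^a\setF[x](v)$; then an arbitrary $x$-stable subspace $W$ decomposes as $\sum_{w\in W}\setF[x](w)$, and the smallest $a$ appearing determines $W = x^a\setF[x](v)$. The $\ker$-description follows from the same decomposition argument.

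For (4) the key input is that $x$ is $\sigma$-colored and, in our setting, $\sigma$ is a bijection; so if $v\in V_i$ then $x^kv \in V_{\sigma^k(i)}$ is colored for each $k$, and $\catB_{v,x}$ is a colored basis of $\setF[x](v)$, proving it is colored. Item (5) is where the first real work happens: writing $v = v_1+\cdots+v_\n$ with $v_i\in\setF[x](v)_i$ (which makes sense since $\setF[x](v)$ is colored), I would observe that $x^{d-1}v_i \in V_{\sigma^{d-1}(i)}$ lie in distinct color components (using injectivity of $\sigma$), so $x^{d-1}v = \sum x^{d-1}v_i\neq 0$ forces at least one summand to be nonzero; any such $v_i$ is a colored vector with $d_x(v_i) = d_x(v)$ and $\setF[x](v_i)\subset\setF[x](v)$, hence (by dimension) equal. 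For uniqueness of color: if $w,w'$ are both colored generators, then $x^{d-1}w$ and $x^{d-1}w'$ both span the colored line $\ker x\cap\setF[x](v)$; comparing their colors and using injectivity of $\sigma$ forces $\chi(w) = \chi(w')$.

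For (6), given $w = \sum_{j=a}^{d-1} c_j x^jv$ with $c_a\neq 0$ and $d = d_x(v)$, I would set
\[v' = \sum_{j=0}^{d-1-a} c_{j+a}\, x^j v,\]
and verify directly that $x^a v' = w$ and that $d_x(v') = d$ (the coefficient of $x^0v$ is $c_a\neq 0$), so $\setF[x](v') = \setF[x](v)$ and $w = x^a v'\in\catB_{v',x}$. For the colored refinement I would first replace $v$ by a colored generator $u$ produced by (5); then, because $w$ is colored, the expansion $w = \sum c_j x^ju$ has $c_j = 0$ unless $\chi(u) + [j] = \chi(w)$, i.e., unless $j$ lies in a fixed residue class mod $\n$. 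Plugging this into the formula for $v'$ shows that all surviving terms $x^j u$ share a common color, so $v'$ is automatically colored.

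I expect the main obstacle to be keeping the color bookkeeping in (5) and (6) honest: specifically, the dimension/color count in (5) that singles out a single $v_i$ as a generator depends crucially on $\sigma$ being injective, and the colored lift in (6) only works because the expansion coefficients of a colored vector $w$ in the basis $\{x^j u\}$ are forced into a single residue class mod $\n$. Items (1)--(4) should go through essentially as in the uncolored classical case, after one checks that colored-ness is preserved under the cyclic action of $x$.
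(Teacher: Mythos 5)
Your proposal is correct and follows essentially the same route as the paper's proof: the classical cyclic-module arguments for (1)--(3), the observation $x^iv\in V_{\chi(v)+[i]}$ for (4), extraction of a colored component $v_r$ with $d_x(v_r)=d_x(v)$ inside the colored space $\setF[x](v)$ for (5), and the truncated-coefficient construction of $v'$ together with the mod-$\n$ congruence of the surviving indices for (6). The only differences are cosmetic (e.g., proving linear independence by applying a high power of $x$ rather than by induction).
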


\begin{proof} Since $v$ and $x$ are fixed, we will set $d = d_x(v)$
  throughout the proof to simplify notation.
  \begin{enumerate}
  \item It is clear that $\catB_{v, x}$ must be contained in any
    $x$-stable subspace of $V$ containing $v$.  The set $\catB_{v, x}$
    is $x$-stable because $x^dv = 0$, so its span must be
    $\setF[x](v)$.  We prove linear independence by induction on $d$.
    If $\sum_{j = 0}^{d - 1} a_j x^j v = 0$ then \[ 0 = x \sum_{j =
      0}^{d - 1} a_j x^j v = \sum_{j = 0}^{d - 2} a_j x^j (xv).\] By
    induction, we must have $a_j = 0$ for each $j < d - 1$.
    Therefore, $a_{d - 1}x^{d-1} v = 0$, hence $a_{d - 1} = 0$.  The
    rest follows immediately.
  \item This is obvious.
  \item From (1) the given spaces are $x$-stable and $\dim
    \setF[x](x^iv) = d - i$.  Let $w \in \setF[x](v)$ with $d' =
    d_x(w)$.  Write $w = \sum_{j=1}^d a_jx^{d-j}v$.  Then \[ 0 =
    x^{d'}w = \sum_{j=1}^d a_jx^{d+d'-j}v = \sum_{j=d' + 1}^d
    a_jx^{d+d'-j}v.  \]

    Linear independence implies that $a_{d' + 1}= \dots = a_{d} = 0$,
    so \[w = \sum_{j=1}^{d'} a_jx^{d-j}v = \sum_{j=1}^{d'}
    a_jx^{d'-j}(x^{d - d'}v).\] Therefore, $w \in
    \setF[x](x^{d-d'}v)$.  Since $\dim \setF[x](x^{d-d'}v) = d_x(w)$
    we must have $\setF[x](x^{d-d'}v) = \setF[x](w)$.
  \item $x^i v \in V_{\chi(v) + [i]}$.
  \item Write $v = v_1 + \dots + v_n$ with $v_m \in V_m$.  There must
    be some $r$ such that $d_x(v_r) \geq d_x(v)$.  Since $\setF[x](v)$
    is colored, we may set $w = v_r \in \setF[x](v)$.  Then $d_x(v_r)
    = \dim \setF[x](v_r) \leq \dim \setF[x](v) = d_x(v)$, hence
    $d_x(v_r) = d_x(v)$ and we apply (3).  Uniqueness of $r$ follows
    immediately from the fact that $\ker x|_{\setF[x](v)}$ is a
    one-dimensional colored subspace.
  \item If $w = \sum_{j=1}^{d'} a_jx^{d'-j}(x^{d - d'}v)$ as in (3),
    set $v' = \sum_{j=1}^{d'} a_jx^{d'-j}v$.  To prove the last claim
    we first observe that, since $\setF[x](v)$ is colored, we may
    assume that $v$ is colored.  In the above expression for $w$, the
    indices $j$ such that $a_j \neq 0$ must all be congruent modulo
    $\n$.  This congruence must also hold in the expression for $v'$,
    so $v'$ is colored.  \qedhere
  \end{enumerate}
\end{proof}

\begin{proposition} \label{prop:colbasis} Each element of $\nil$
  admits a colored Jordan basis.
\end{proposition}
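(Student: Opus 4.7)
The plan is to induct on $\dim V$, with the trivial case $V = 0$ as the base. For the inductive step, I will carve off a colored $x$-stable cyclic subspace, apply the inductive hypothesis to the quotient, and then lift the resulting colored Jordan basis of the quotient back to $V$ without destroying the colored property.

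Choose a colored vector $v \in V$ whose invariant $d := d_x(v)$ is maximal among all colored vectors. Let $F := \setF[x](v)$. By lemma~\ref{lem:jordanlabel}(4), $F$ is a colored $x$-stable subspace with colored Jordan basis $\catB_{v,x}$. The quotient $V/F$ carries an induced colored structure and an induced $\sigma$-colored nilpotent $\overline{x}$; by the inductive hypothesis, $\overline{x}$ admits a colored Jordan basis organized into chains $\{\overline{w_{i,j}}\}_{1 \le j \le \mu_i}$ for some partition $\mu_1 \ge \mu_2 \ge \dots$. A preliminary observation is that $\mu_i \le d$ for every $i$: any colored lift $w$ of $\overline{w_{i,\mu_i}}$ with color $\chi(\overline{w_{i,\mu_i}})$ satisfies $x^{\mu_i - 1}w \notin F$ and hence $d_x(w) \ge \mu_i$, while $d_x(w) \le d$ by the maximality choice.

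The technical heart of the argument is the lifting step. Given a colored lift $w_i$ of $\overline{w_{i,\mu_i}}$ in the correct color, set $u_i := x^{\mu_i} w_i \in F$. Since $d_x(w_i) \le d$, we have $x^{d - \mu_i} u_i = x^d w_i = 0$, so $u_i$ lies in $F \cap \ker x^{d - \mu_i}$, which by lemma~\ref{lem:jordanlabel}(3) equals $x^{\mu_i}(F)$. Pick $z \in F$ with $x^{\mu_i} z = u_i$ and decompose $z = \sum_j z_j$ with $z_j \in F_j$. Because $x$ shifts colors by $[1]$, the summand $x^{\mu_i}z_j$ belongs to $V_{j + [\mu_i]}$; matching this against $u_i \in V_{\chi(w_i) + [\mu_i]}$ and using unique colored decomposition forces $x^{\mu_i} z_{\chi(w_i)} = u_i$. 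Replacing $w_i$ by $w_i' := w_i - z_{\chi(w_i)}$ produces a colored lift of $\overline{w_{i,\mu_i}}$ that satisfies $x^{\mu_i} w_i' = 0$. Then $w_{i,j} := x^{\mu_i - j} w_i'$ defines a colored $x$-chain in $V$ reducing to the given chain in the quotient.

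Finally I will verify that $\catB := \catB_{v,x} \cup \{w_{i,j}\}$ is a colored Jordan basis for $x$ on $V$. It is colored and $x$-stable by construction; its image in $V/F$ is the given colored Jordan basis while $\catB_{v,x}$ spans $F$, so $\catB$ spans $V$; the cardinality $d + \sum_i \mu_i$ equals $\dim V$, giving linear independence; and the leftmost elements $x^{d-1}v$ and $w_{i,1}$ all lie in $\ker x$, with their count matching $\dim \ker x$ (one per chain). The main obstacle is the lifting step, where the maximality of $d$ is used precisely to guarantee $u_i \in x^{\mu_i}(F)$, and where the colored-component extraction is needed to preserve the colored structure; the remaining verifications are dimension bookkeeping.
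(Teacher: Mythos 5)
Your proof is correct, and it is a genuine reorganization of the paper's argument rather than a copy of it. Both proofs start the same way (a colored $v$ with $d_x(v)$ maximal, the cyclic subspace $F=\setF[x](v)$, and lemma~\ref{lem:jordanlabel}(3) to locate the obstruction inside a cyclic subspace), but the paper runs a bottom-up iteration: it adjoins one chain at a time, each time choosing a vector whose class is maximal in the current quotient $V/W$ and correcting it against the entire accumulated sum $W=\bigoplus_i\setF[x](v_i)$, which forces it to re-choose generators via lemma~\ref{lem:jordanlabel}(6). You instead induct on $\dim V$: quotient by the single largest block $F$, obtain the whole colored Jordan basis of $V/F$ at once from the inductive hypothesis, and lift each chain separately, so each correction reduces to solving $x^{\mu_i}z=u_i$ inside $F$ alone and then extracting the colored component of $z$ to stay in the right color --- a cleaner division problem than the paper's multi-block correction. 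Linear independence of the assembled set then comes for free from spanning plus a cardinality count, rather than from arranging each new cyclic subspace to be complementary to the previous ones. The one thing the paper's version delivers more explicitly is the ordered family of generators $v_1,v_2,\dots$ with $d_x(v_i)\geq d_{\bar x}(\bar w)$ at every stage, which it leans on when stating the corollary that follows; your construction also yields the $x$-stable colored complement $\Span{\{w_{i,j}\}}$ of $F$ needed there, so nothing is lost.
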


\begin{proof}
  Fix $x \in \nil$.  Choose $v \in V$ such that $d_x(v)$ is maximal
  and decompose $v = v_1 + \dots + v_n$.  There is a $k$ such that
  $d_x(v_k) = d_x(v)$.  By relabeling, then, we may assume that $v$ is
  colored, so $W = \setF[x](v)$ is colored and $x$-stable with a
  colored Jordan basis $\catB_{v, x}$.
    
  Inductively assume that $W \subset V$ is an $x$-stable colored
  subspace that admits a colored Jordan basis.  That is, there exist
  colored vectors $v_1, \dots, v_r$ such that $\bigsqcup \catB_{v_i,
    x}$ is a basis of $W$.  Assume further that if $ x: V / W \to V /
  W$ is the map induced by $x$ then $d_x(v_i) \geq d_{\bar x} (\bar
  w)$ for each $\bar w \in V / W$.  Note that clearly $d_x(w) \geq
  d_{\bar x}(\bar w)$.
    
  Let $w$ be a colored vector with $d_{\bar x} (\bar w)$ maximal.
  Then $W \cap \setF[x](w)$ is an $x$-stable colored subspace of
  $\setF[x](w)$, hence $W \cap \setF[x](w) = \setF[x](x^kw)$ for some
  $k \geq 0$.  Write $x^kw = \sum u_i$, with $u_i \in \setF[x](v_i)$.
  By applying (6) from lemma \ref{lem:jordanlabel} to $u_i \in
  \setF[x](v_i)$ we may write $x^kw = \sum_i x^{k_i}v_i$.  Now,
  $d_x(v_i) - k_i = d_x(x^{k_i} v_i) \leq d_x(x^k w) = d_x(w) - k$ and
  we have $k_i - k \geq d_x(v_i) - d_x(w) \geq 0$.  Therefore, we can
  set $v_{r+1} = w - \sum_i x^{k_i - k}v_i$.  Then $W +
  \setF[x](v_{r+1}) = W + \setF[x](w)$ and $d_x(v_{r+1}) = d_{\bar
    x}(\bar w)$.  Furthermore, the construction ensures that $v_{r+1}$
  is colored, so $\left(\bigsqcup \catB_{v_i, x}\right) \sqcup
  \catB_{v_{r+1}, x}$ is a colored Jordan basis for $W \oplus
  \setF[x](w)$ and $d_x(v_i) \geq d_{\bar x} (\bar w)$ for each $\bar
  w \in V / (W \oplus \setF[x](w))$, which completes the induction.
\end{proof}

If $x$ has a colored Jordan basis of type $(\lambda, \epsilon)$ then
we may refer to (the equivalence class of) $(\lambda, \epsilon)$ as
the colored Jordan type of $x$.  We will shortly see that this is
well-defined.  With this terminology in mind, the proposition and its
proof give us the following:

\begin{corollary}
  If $v_0 \in V$ is colored and satisfies $d_x(v_0) \geq d_x(v)$ for
  each $v \in V$ then $W_0 = \setF[x](v_0)$ has an $x$-stable colored
  complement $W$ and $x |_W$ has the same colored Jordan type as $x
  |_{V / W_0}$.
\end{corollary}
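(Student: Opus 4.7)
The plan is to extract $W$ directly from the inductive construction used to prove Proposition \ref{prop:colbasis}. Because $v_0$ is colored with $d_x(v_0)$ maximal, it qualifies as the base case of that induction: $W_0 = \setF[x](v_0)$ is $x$-stable and colored with colored Jordan basis $\catB_{v_0, x}$. Iterating the inductive step produces colored vectors $v_1, \ldots, v_r$ such that $\catB_{v_0, x} \sqcup \catB_{v_1, x} \sqcup \cdots \sqcup \catB_{v_r, x}$ is a colored Jordan basis of $V$, and each partial sum $W_0 \oplus \setF[x](v_1) \oplus \cdots \oplus \setF[x](v_i)$ is an $x$-stable colored subspace. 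I then set $W = \setF[x](v_1) \oplus \cdots \oplus \setF[x](v_r)$; this is $x$-stable, colored (the union of the $\catB_{v_i, x}$ for $i \geq 1$ is a colored basis, using Lemma \ref{lem:jordanlabel}(4)), and $V = W_0 \oplus W$.

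To match colored Jordan types, I invoke the earlier lemma on unions of colored Jordan bases for disjoint $x$-stable colored subspaces to conclude that $\catB_{v_1, x} \sqcup \cdots \sqcup \catB_{v_r, x}$ is a colored Jordan basis for $x|_W$, with associated colored partition $(\lambda, \epsilon)$ determined by $\lambda_i = d_x(v_i)$ and $\epsilon_i = \chi(v_i)$. To transfer this to $V / W_0$, I consider the quotient map $\pi : V \to V / W_0$, which is colored. Its restriction $\pi|_W$ is a color-preserving linear isomorphism onto $V / W_0$: a bijection because $W$ complements $W_0$, and color-preserving because each basis element lies in some $V_j$ and is sent to $(V/W_0)_j$. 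The identity $\pi \circ x = \bar{x} \circ \pi$ shows that $\pi|_W$ intertwines $x|_W$ with $\bar{x}$, so the image of the colored Jordan basis for $x|_W$ is a colored Jordan basis for $\bar{x}$ of the same type $(\lambda, \epsilon)$.

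The only point requiring care is that the induction in Proposition \ref{prop:colbasis} can be initialized with the given $v_0$ rather than an abstractly chosen starting vector. This is immediate, since the argument there only uses that the initial vector is colored with maximal $d_x$, both of which are hypotheses of the corollary. I therefore do not anticipate any serious obstacle beyond this bookkeeping.
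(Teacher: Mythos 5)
Your proof is correct and takes essentially the route the paper intends: the corollary is presented there as a direct byproduct of the proof of Proposition \ref{prop:colbasis}, and you obtain $W$ precisely by seeding that induction with the given $v_0$ (which is legitimate, since the base case uses only that the seed is colored with maximal $d_x$). Your quotient-map intertwining argument identifying the colored Jordan type of $x|_W$ with that of $x|_{V/W_0}$ is a clean substitute for---and in effect a proof of---the paper's earlier lemma stating that $\catB \setminus \catA$ induces a colored Jordan basis for $x|_{V/A}$ when $\catA$ is an $x$-stable subset of a colored Jordan basis $\catB$.
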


\begin{theorem} \label{thm:paramc} The map $(\lambda, \epsilon)
  \mapsto \orb_{\lambda, \epsilon}: \catP_{\xi(V)} \to K \backslash
  \nil$ defined in lemma \ref{lem:partmap} is a bijection.  That is,
  if $x \in \nil$ has a colored Jordan basis of type $(\lambda,
  \epsilon)$ and $y \in \orb_x$ has a colored Jordan basis of type
  $(\alpha, \beta)$ then any $(\lambda, \epsilon)$ and $(\alpha,
  \beta)$ are equivalent.  Each colored partition of signature
  $\xi(V)$ is the type of a colored Jordan basis for some $x \in
  \nil$.  Moreover, if $x, y \in \nil$ then $\orb_y = \orb_x$ if and
  only if $s_k(x) = s_k(y)$ for each $k \in \setN$.
\end{theorem}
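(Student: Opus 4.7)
The plan is to combine Proposition \ref{prop:colbasis} (every $x \in \nil$ admits a colored Jordan basis) with Lemma \ref{lem:K-basis} ($K$ acts transitively on colored bases of $V$) to establish surjectivity and well-definedness, and then to use the column signatures $s_k(x) = \xi(\ker x^k)$ as a complete $K$-invariant to handle injectivity together with the final characterization.

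First I would verify that the map $(\lambda, \epsilon) \mapsto \orb_{\lambda, \epsilon}$ descends to equivalence classes. If $\catB$ and $\catB'$ are two bases of $V$ of type $(\lambda, \epsilon)$, then both are colored bases of $V$ (the signature condition $\xi(\lambda, \epsilon) = \xi(V)$ ensures that exactly $\dim V_m$ of the basis vectors lie in $V_m$), so by Lemma \ref{lem:K-basis} some $g \in K$ carries $\catB$ to $\catB'$, and this $g$ simultaneously conjugates the two associated nilpotent endomorphisms. Permuting rows of equal length merely relabels basis vectors and yields the same orbit. Surjectivity is then immediate from Proposition \ref{prop:colbasis}: given $x \in \nil$, choose any colored Jordan basis, label it as in Lemma \ref{lem:jordan} to read off its type $(\lambda, \epsilon)$, and observe $x \in \orb_{\lambda, \epsilon}$.

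The substantive step is injectivity, which I would handle in parallel with the $s_k$ statement. First, $s_k$ is $K$-invariant: if $y = g x g^{-1}$ with $g \in K$ then $\ker y^k = g(\ker x^k)$, and $K$ preserves signatures of colored subspaces. Next, the colored Young diagram yields $s_k$ in closed form: the basis vectors $v_{i,j}$ with $j \leq k$ form a colored basis of $\ker x^k$, so letting $r_l^{m_0}$ denote the number of rows of length $l$ with color $m_0$,
\[
(s_k - s_{k-1})(m) \;=\; \sum_{l \geq k} r_l^{m - [l - k]}.
\]
In particular $s_k$ depends only on the equivalence class of $(\lambda, \epsilon)$.

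The main obstacle is inverting this relation to recover $(\lambda, \epsilon)$ from $\{s_k\}$. I would do so by a one-step telescoping: using $[l - k] = [l - k - 1] + [1]$, a direct substitution gives $(s_k - s_{k-1})(m) - (s_{k+1} - s_k)(m - [1]) = r_k^{m}$, so each $r_k^m$, and thus the multiset of rows, is determined by the $s_k$. The theorem then assembles as follows. If $\orb_x = \orb_y$ then $s_k(x) = s_k(y)$ for all $k$ by $K$-invariance. Conversely, if $s_k(x) = s_k(y)$ for all $k$, the inversion formula forces the colored Jordan types of $x$ and $y$ to be equivalent, so each of $x, y$ admits a colored Jordan basis of a common type $(\lambda, \epsilon)$; a final application of Lemma \ref{lem:K-basis} carries one basis onto the other and conjugates $x$ to $y$. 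This yields both injectivity of $(\lambda, \epsilon) \mapsto \orb_{\lambda, \epsilon}$ and the column-signature characterization of orbits.
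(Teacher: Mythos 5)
Your proposal is correct and follows essentially the same route as the paper: surjectivity from Proposition \ref{prop:colbasis}, $K$-invariance of $s_k = \xi(\ker x^k)$, and recovery of the colored Jordan type from the column signatures. Your explicit telescoping identity $(s_k - s_{k-1})(m) - (s_{k+1} - s_k)(m - [1]) = r_k^m$ is just a formalized version of the paper's inductive column-by-column argument, and it checks out.
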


\begin{proof}
  Surjectivity is the content of proposition \ref{prop:colbasis}, so
  we only need to show injectivity.  If $h \in K$ and $k \in \setN$
  then
  \begin{align*}
    s_k(h \cdot x) &= \xi(\ker (h \cdot x)^k) \\
    &= \xi(\ker (h \cdot x^k) \\
    &= \xi(h \cdot \ker x^k) \\
    &= \xi(\ker x^k) \\
    &= s_k(x).
  \end{align*}
  Therefore, if $y \in \orb_x$ then $s_k(x) = s_k(y)$ for each $k \in
  \setN$.

  Now, if $s_k(x) = s_k(y)$ and we draw the colored Young diagram with
  the columns aligned on the left then the number of boxes in column
  $k$ is equal to $\dim (\ker x^k) - \dim \ker x^{k-1}$.  For a fixed
  color $m$, the number of boxes of color $m$ in column $k$ is
  precisely $s_k(x)(m) - s_{k-1}(x)(m)$.  If there is a box of color
  $m$ in column $k > 1$ then the box immediately to the left must be
  of color $m + [1]$, so inductively the rows are uniquely determined,
  up to reordering entire rows.  Therefore, the colored Jordan types
  of $x$ and $y$ are equivalent, so the map is injective.

  Finally, if the colored Jordan types of $x$ and $y$ are equivalent
  then $\orb_x = \orb_y$ by (3) in lemma \ref{lem:partmap}.
\end{proof}

\begin{definition} \label{def:s} If $x \in \nil$ has colored Jordan
  type $(\lambda, \epsilon)$ then $s_k(\lambda, \epsilon) = s_k(x)$.
\end{definition}

\begin{corollary}
  The following are equivalent:
  \begin{enumerate}
  \item $(\lambda, \epsilon)$ and $(\alpha, \beta)$ are equivalent;
  \item $\orb_{\lambda, \epsilon} = \orb_{\alpha, \beta}$;
  \item $s_k(\lambda, \epsilon) = s_k(\alpha, \beta)$.
  \end{enumerate}
\end{corollary}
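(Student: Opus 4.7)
The plan is to observe that this corollary is essentially a restatement of Theorem \ref{thm:paramc}, reorganized as a cyclic equivalence. I will prove (1) $\Rightarrow$ (2) $\Rightarrow$ (3) $\Rightarrow$ (1), with each implication drawing on a distinct piece of the theorem or its immediate precursors.

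For (1) $\Rightarrow$ (2): if $(\lambda,\epsilon)$ and $(\alpha,\beta)$ are equivalent, then they differ only by a permutation of rows of equal length. I would appeal to lemma \ref{lem:partmap}(3), which asserts that $(\lambda,\epsilon) \mapsto \orb_{\lambda,\epsilon}$ is well-defined on equivalence classes; this immediately gives $\orb_{\lambda,\epsilon} = \orb_{\alpha,\beta}$. Alternatively, one can exhibit an explicit element of $K$ permuting the two Jordan bases via proposition \ref{lem:K-basis}.

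For (2) $\Rightarrow$ (3): if $\orb_{\lambda,\epsilon} = \orb_{\alpha,\beta}$, pick any $x$ in the common orbit. Then by definition \ref{def:s}, $s_k(\lambda,\epsilon) = s_k(x) = s_k(\alpha,\beta)$. Here one needs that $s_k(x)$ depends only on $\orb_x$, which is the first calculation inside the proof of theorem \ref{thm:paramc}: $s_k(h\cdot x) = \xi(h\cdot \ker x^k) = \xi(\ker x^k) = s_k(x)$ for $h \in K$.

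For (3) $\Rightarrow$ (1): this is the uniqueness half of theorem \ref{thm:paramc}. Given $s_k(\lambda,\epsilon) = s_k(\alpha,\beta)$ for all $k$, the colored Young diagram associated to either partition is determined column-by-column: the number of boxes of color $m$ in column $k$ is $s_k(m) - s_{k-1}(m)$, and since color decreases by $[1]$ as one moves right-to-left along a row, the entire row-and-color data is fixed up to reordering rows of equal length. Hence $(\lambda,\epsilon)$ and $(\alpha,\beta)$ are equivalent.

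None of the three steps poses a real obstacle, since all the substantive content already appears in theorem \ref{thm:paramc} and its supporting lemmas; the only task is to assemble the implications cleanly.
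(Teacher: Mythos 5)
Your proposal is correct and matches the paper's intent exactly: the corollary is stated without proof precisely because it is an immediate reassembly of the pieces of theorem \ref{thm:paramc} (well-definedness of the map from lemma \ref{lem:partmap}(3), $K$-invariance of $s_k$, and the column-by-column reconstruction argument for injectivity). Your three-step cyclic argument organizes those pieces cleanly and introduces nothing new.
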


If $\n = 1$ then $\epsilon$ is trivial, so we naturally obtain the
classical parametrization of nilpotents by partitions.  In this case,
the signature of a partition is the same as its size.  If $\n = 2$
then it is customary to use $+$ and $-$ as colors, rather than $0$ and
$1$, respectively, hence the terminology ``signed partition.''  The
signature of a signed partition is the pair $(p, q)$, where $p$ is the
number of boxes containing a $+$ sign and $q$ is the number of boxes
containing a $-$ sign.

\begin{figure}[h]
  \caption{A signed partition of signature $(8, 10)$.  This example is
    $\lambda = (5, 5, 3, 2, 2, 1)$ and $\epsilon = (-, +, -, -, -,
    -)$.}
  \begin{center}
    \includegraphics{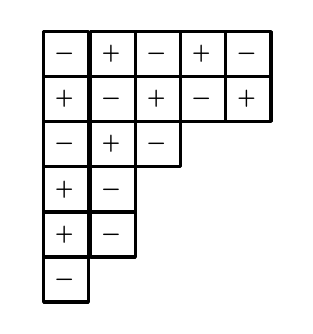}
  \end{center}
\end{figure}

\section {The enhanced colored nilpotent cone} \label{sec:enhnil}

Since the action of $K$ on $V$ preserves $\widetilde V = \bigcup_{i
  \in \zmod} V_i$, we have a diagonal action of $K$ on the
\emph{enhanced} colored nilpotent cone $\vnil$.  We have seen that
$\knil$ is finite and is parametrized by $\catP_{\xi(V)}$.  We will
show that this enhanced diagonal action also yields finitely many
orbits and we will describe a simple generalization of
$\catP_{\xi(V)}$ that parametrizes these orbits.  As was discussed
earlier, the case $\n = 1$ was proved in \cite{travkin} and \cite{AH},
with orbits parametrized by bipartitions.  The procedures and notation
used in \cite{AH} prove to generalize particularly well in this
context, so whenever possible we use them as a model in this
exposition.

\subsection{Marked colored partitions} \label{ssec:marked}

\begin{definition} \label{def:mark} If $(\lambda, \epsilon)$ is a
  colored partition and $k$ is a positive integer then
  \begin{enumerate}
  \item A \emph{marking} of $\lambda$ is a function $\mu: \setN \to
    \setZ$ such that $\mu_i \leq \lambda_i$ for each $i$.  The pair
    $(\lambda, \mu)$ is a \emph{marked partition}.  For convenience we
    will frequently make use of $\nu = \nu(\lambda, \mu) = \lambda -
    \mu \geq 0$.
  \item The triple $(\lambda, \epsilon, \mu)$ is a \emph{marked}
    colored partition.
  \item If $(\lambda, \mu)$ is a marked partition such that $0 \leq
    \mu_{i + 1} \leq \mu_i$ and $\nu_{i + 1} \leq \nu_i$ for each $i$
    then $(\lambda, \mu)$ is a \emph{bipartition} and $(\lambda,
    \epsilon, \mu)$ is a \emph{colored bipartition}.
  \item If $-k < \mu_i$ for all $i$ and $\mu_j < \mu_i + k $ and
    $\nu_j < \nu_i + k$ for each $i < j$ then $(\lambda, \mu)$ is a
    \emph{\qb[k]{}}.
  \item $(\lambda, \epsilon, \mu)$ is a \emph{\cqb[k]{}} if $(\lambda,
    \mu)$ is a \qb[k] and $\epsilon_i + \nu_i \equiv \epsilon_j +
    \nu_j \pmod k$ for each $i, j$.
  \item If $(\lambda, \epsilon, \mu)$ satisfies $-k < \mu_i$ for all
    $i$ then $(\lambda, \epsilon, \mu)$ is a \emph{\gqb[k]{}} if
    $\mu_j < \mu_i + k $ and $\nu_j < \nu_i + k$ for each pair $i < j$
    such that $\epsilon_i + \nu_i \equiv \epsilon_j + \nu_j \pmod k$.
  \end{enumerate}
\end{definition}

If $\n = 1$ then $\epsilon$ is trivial, so when it is convenient we
may simply express $(\lambda, \epsilon, \mu)$ as the marked partition
$(\lambda, \mu)$.  We visualize $(\lambda, \epsilon, \mu)$ by drawing
the colored Young diagram for $(\lambda, \epsilon)$ and marking the
wall between boxes $\mu_i, \mu_i + 1$ in row $i$.  If $\mu_i \leq 0$
then we mark the leftmost wall in the row.  As in the previous
section, we think of the boxes in the diagram as elements of a colored
Jordan basis $\catB = \{v_{i,j} \mid 1 \leq j \leq \lambda_i\}$ for
some $x \in \orb_{\lambda, \epsilon}$.  We may refer to $(\lambda,
\epsilon)$ as the \emph{shape} of $(\lambda, \epsilon, \mu)$.

It is usually convenient to draw the marked colored Young diagram with
the marks aligned.  That is, the rows are shifted left or right as
necessary so that the marks form a single vertical line.  If $\mu_i <
0$ then we draw the left end of the row $|\mu_i|$ positions to the
\emph{right} of the mark.  The marking divides the colored Young
diagram into two sub-diagrams.  If $\mu \geq 0$ then the sub-diagram
on the left of the marks is the colored diagram corresponding to
$(\mu, \epsilon + [\nu])$, while the diagram on the right corresponds
to $(\nu, \epsilon)$.  Finally, $(\lambda, \epsilon, \mu)$ is a
colored bipartition if and only if $(\mu, \epsilon + [\nu])$ and
$(\nu, \epsilon)$ are each colored partitions.

\begin{figure}[h]
  \caption{The \cqb[3] defined by $\lambda = (5,5,3,3,2,1)$, $\epsilon
    = (2,1,1,0,0,0)$, $\mu = (1,3,1,0,-1,1)$.  Notice that, on each
    side of the diagram, each increase in length (from top to bottom)
    is less than $3$.}
  \begin{center}
    \includegraphics{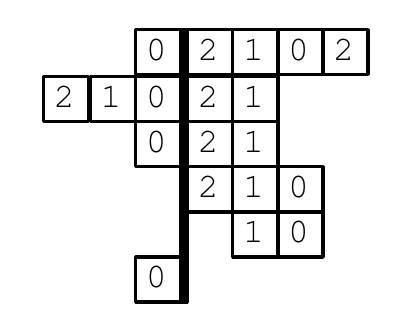}
  \end{center}
\end{figure}

Note that every \cqb[k] is automatically a \gqb[k] and every
bipartition is a \qb[k].  More generally, if $l > k$ then each \qb[k]
is automatically an \qb[l].  Furthermore, if $k = 1$ then $\mu_i \geq
0$ and $\mu_{i + 1} < \mu_i + k \implies \mu_{j+1} < \mu_i + 1$, so
$\mu_{i+1} \leq \mu_i$.  Similarly, $\nu_{i+1} < \nu_i + k \implies
\nu_{i+1} \leq \nu_i$.  That is, a \qb[1] is just a bipartition.  In
this sense, a \qb[k] is a deformation of a bipartition.  The following
lemma makes this idea precise.

\begin{lemma} \label{lem:rho2} If $(\lambda, \mu)$ is a marked
  partition, define the marking $\tilde \mu$ of $\lambda$ by \[\tilde
  \mu_i = \max(\{\mu_j \mid j \geq i\} \cup \{ \lambda_i - \lambda_j +
  \mu_j \mid j < i\} \cup \{0\}).\]
  \begin{enumerate}
  \item $(\lambda, \tilde \mu)$ is a bipartition satisfying $\tilde
    \mu \geq \mu$.
  \item If $(\lambda, \delta)$ is another bipartition satisfying
    $\delta \geq \mu$ then $\tilde \mu \leq \delta$.
  \item $(\lambda, \mu)$ is a \cqb[k] if and only if $0 \leq \tilde
    \mu - \mu < k$.
  \end{enumerate}
\end{lemma}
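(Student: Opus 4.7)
The plan is to verify each part by unpacking the definition of $\tilde\mu_i$ as a maximum over three explicit sets and doing a termwise case analysis. Since the condition in (3) does not involve $\epsilon$, I interpret it as the statement that $(\lambda,\mu)$ is a \qb[k], and I will also use the shorthand $\tilde\nu = \lambda - \tilde\mu$.

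For (1), the bounds $\tilde\mu_i \geq \mu_i$ and $\tilde\mu_i \geq 0$ are immediate: both $\mu_i$ (take $j=i$) and $0$ are explicit members of the defining set. To show $(\lambda,\tilde\mu)$ is a bipartition I verify the two monotonicity conditions separately. For $\tilde\mu_{i+1} \leq \tilde\mu_i$ I bound each term of the set defining $\tilde\mu_{i+1}$ by a term of the set defining $\tilde\mu_i$: a $\mu_j$ with $j \geq i+1$ appears in both, a term $\lambda_{i+1}-\lambda_j+\mu_j$ with $j < i$ is at most $\lambda_i-\lambda_j+\mu_j$, the term at $j=i$ equals $\lambda_{i+1}-\lambda_i+\mu_i \leq \mu_i$, and $0$ appears on both sides. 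For $\tilde\nu_{i+1} \leq \tilde\nu_i$, equivalent to $\tilde\mu_{i+1} - \tilde\mu_i \geq \lambda_{i+1}-\lambda_i$, I argue in the opposite direction: whichever term $t$ attains the maximum defining $\tilde\mu_i$ I exhibit a term in $\tilde\mu_{i+1}$'s set exceeding $t + (\lambda_{i+1}-\lambda_i)$. A $\mu_j$ with $j > i$ survives; $\mu_i$ becomes $\lambda_{i+1}-\lambda_i+\mu_i$ (the $j=i<i+1$ term); each $\lambda_i-\lambda_j+\mu_j$ with $j<i$ becomes $\lambda_{i+1}-\lambda_j+\mu_j$; and the trivial lower bound $\tilde\mu_{i+1} \geq 0$ dominates $\lambda_{i+1}-\lambda_i \leq 0$.

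For (2), given a bipartition $(\lambda,\delta)$ with $\delta \geq \mu$, I check that $\delta_i$ dominates each of the three kinds of terms in the max defining $\tilde\mu_i$. For $\mu_j$ with $j \geq i$ use $\delta_i \geq \delta_j \geq \mu_j$. For $\lambda_i-\lambda_j+\mu_j$ with $j < i$ use the bipartition condition $\lambda_j-\delta_j \geq \lambda_i-\delta_i$, rewritten as $\delta_i \geq \lambda_i-\lambda_j+\delta_j$, combined with $\delta_j \geq \mu_j$. For $0$, use $\delta_i \geq 0$. Taking the max over the three cases gives $\delta_i \geq \tilde\mu_i$.

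For (3), the inequality $\tilde\mu-\mu \geq 0$ is part (1), so the content is the equivalence of $\tilde\mu_i - \mu_i < k$ for every $i$ with the defining conditions of a \qb[k]. Expanding $\tilde\mu_i < \mu_i + k$ termwise yields exactly: (i) $\mu_j < \mu_i + k$ for $j \geq i$ (trivial at $j=i$, and for $j > i$ this is the second condition); (ii) $\lambda_i-\lambda_j+\mu_j < \mu_i + k$ for $j < i$, which rearranges to $\nu_i < \nu_j + k$ and, after relabelling the index pair, is $\nu_j < \nu_i + k$ for $i < j$; and (iii) $0 < \mu_i + k$, i.e., $\mu_i > -k$. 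These are precisely the three inequalities in the definition. The main obstacle is the bookkeeping in part (1) for the dual monotonicity $\tilde\nu_{i+1} \leq \tilde\nu_i$, since the definition of $\tilde\mu$ is obviously compatible with one direction but requires the termwise matching above to obtain the other.
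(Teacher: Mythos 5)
Your proof is correct and follows essentially the same route as the paper's: all three parts are handled by decomposing the max defining $\tilde\mu_i$ and comparing term by term (the paper phrases the dual monotonicity $\tilde\nu_{i+1}\leq\tilde\nu_i$ via a min-decomposition of $\lambda_i-\tilde\mu_i$, which is the same computation as your ``opposite direction'' matching, and its part (3) separates the two implications where you expand $\tilde\mu_i<\mu_i+k$ termwise, again the same inequalities). Your reading of the statement of (3) as referring to a \qb[k] rather than a \cqb[k] is the intended one, since $\epsilon$ does not appear in the hypothesis.
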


\begin{proof} \
  \begin{enumerate}
  \item It is obvious that $\tilde \mu \geq \mu$.  Now, for fixed $i$
    we have
    \begin{align*}
      \tilde \mu_i &= \max (\{\mu_j \mid j \geq i + 1\} \cup
      \{\lambda_i - \lambda_k + \mu_k  \mid  k  <  i  \}  \\
      & \qquad \cup  \{  \mu_i  \} \cup \{0\})  \\
      \tilde \mu_{i+1} &= \max (\{\mu_j \mid j \geq i + 1\} \cup
      \{\lambda_{i + 1} - \lambda_k + \mu_k \mid k < i \} \\
      & \qquad \cup \{\mu_i - \lambda_i + \lambda_{i+1} \} \cup
      \{0\}).
    \end{align*}
    
    These decompositions make it clear that for each element in the
    set corresponding to $\tilde \mu_{i+1}$ there is an element of the
    set corresponding to $\tilde \mu_i$ that is at least as large.
    Therefore, $\tilde \mu_i \geq \tilde \mu_{i+1}$.  A similar
    decomposition shows that $\lambda_i - \tilde \mu_i \geq
    \lambda_{i+1} - \tilde \mu_{i+1}$:
    \begin{align*}
      \lambda_i - \tilde \mu_i &= \min (\{\lambda_i - \mu_j \mid j
      \geq i + 1\}
      \cup \{\lambda_k -\mu_k \mid k < i \} \\
      & \qquad \cup \{ \lambda_i - \mu_i \} \cup \{\lambda_i\}) \\
      \lambda_{i+1} - \tilde \mu_{i+1} &= \min (\{\lambda_{i+1} -
      \mu_j \mid j \geq i
      + 1\} \cup \{ \lambda_k - \mu_k \mid k < i \} \\
      & \qquad \cup \{ \lambda_i - \mu_i \} \cup \{\lambda_{i+1}\}).
    \end{align*}

  \item Let $(\lambda, \delta)$ be a bipartition such that $\delta
    \geq \mu$.  If $j \geq i$ then $\delta_i \geq \delta_j \geq
    \mu_j$.  Similarly, if $j < i$ then $\delta_i = \lambda_i -
    (\lambda_i - \delta_i) \geq \lambda_i - (\lambda_j - \delta_j)
    \geq \lambda_i - \lambda_j + \mu_j = \lambda_i - \lambda_j +
    \mu_j$.  Therefore, $\delta \geq \max(\{\mu_j \mid j \geq i\} \cup
    \{ \lambda_i - \lambda_j + \mu_j \mid j < i\}) = \tilde \mu_i$

  \item Assume first that $0 \leq \tilde \mu - \mu < k$.  Let $i < j$.
    Then $\mu_j < \tilde \mu_j \leq \tilde \mu_i < \mu_i + k$ and
    $\lambda_j - \mu_j = \lambda_j - \mu_j < \lambda_j + k - \tilde
    \mu_j \leq k + \lambda_i - \tilde \mu_i \leq k + \lambda_i - \mu_i
    = \lambda_i - \mu_i + k$, so $(\lambda, \mu)$ is an \qb.
    Conversely, assume that $(\lambda, \mu)$ is an \qb.  If $j < i$
    then $\lambda_i - \lambda_j + \mu_j = \lambda_i + k - \lambda_i +
    \mu_i < k + \mu_i$.  If $j > i$ then $\mu_j < \mu_i + k$.
    Therefore, $\tilde \mu_i < \mu_i + k$ and we conclude that $0 \leq
    \tilde \mu - \mu < k$.  \qedhere
  \end{enumerate}
\end{proof}

\begin{definition}
  If $(\lambda, \mu)$ is a marked partition and \[\tilde \mu_i =
  \max(\{\mu_j \mid j \geq i\} \cup \{ \lambda_i - \lambda_j + \mu_j
  \mid j < i\} \cup \{0\})\] then $(\lambda, \tilde \mu)$ is the
  \emph{minimal bipartition} associated to $(\lambda, \mu)$.
\end{definition}

As usual, we view two marked colored partitions as equivalent if one
can be transformed into the other by reordering rows, along with
corresponding marks.  It is a simple exercise to show that if
$(\lambda, \epsilon, \mu)$ and $(\alpha, \beta, \gamma)$ are row
equivalent and one of them is an \cqb then so is the other.  Let $\ds
\catP_f^{\text m}$ denote the set of equivalence classes of marked
colored partitions of signature $f$.  Let $\ds \catQ_f \subset
\catP_f^{\text m}$ denote the (clearly finite) subset consisting of
\cqb{}s.  It will soon be important to consider a slightly stronger
equivalence relation on marked colored partitions, so when clarity is
required we may say ``row-equivalence'' to refer to the above
relation.

It is worth digressing here for a brief discussion of notation.  It is
common to define a bipartition as a pair $(\mu; \nu)$ of partitions
and then define $\lambda = \mu + \nu$.  This is done, for example, in
\cite{AH}.  To be consistent with this choice of notation, we could
define a colored bipartition to be a pair $((\mu, \beta); (\nu,
\epsilon))$ of colored partitions such that $\beta = \epsilon +
[\nu]$.  Alternatively, we could choose to denote this $(\mu, \nu,
\epsilon)$.  However, we find the notation in the definition, which
emphasizes the underlying partition $\lambda$, to be more convenient
for our purposes here.

Our parametrization of $\kvnil$ will essentially be in terms of a set
of marked colored partitions.  In fact, to each element of
$\catP_{\xi(V)}$ there corresponds an orbit in $K \backslash (V \times
\nil)$.  The set of marked colored partitions of signature $\xi(V)$ is
infinite, but we will see that the set of orbits corresponding to
marked colored partitions is finite, so it is clear from the outset
that there are many markings of a fixed colored partition that must be
considered equivalent for the purposes of this parametrization.  The
construction we give will make it clear that if $\mu_i \leq 0$ then
the precise value of $\mu_i$ is irrelevant.  Thus, we can consider
$(\lambda, \epsilon, \mu)$ and $(\alpha, \beta, \gamma)$ equivalent if
there is a permutation $\sigma$ of $\setN$ such that $\alpha = \lambda
\circ \sigma$, $\beta = \epsilon \circ \sigma$, and $\gamma_i = (\mu
\circ \sigma)_i$ whenever $\gamma_i > 0$ or $(\mu \circ \sigma)_i >
0$.  In other words, we are completely disregarding the value of
$\mu_i$ if $\mu_i \leq 0$.  Let $\widetilde \catP_f$ denote the set of
classes under this equivalence and let $\widetilde \catQ_f$ be the
subset whose classes each contain at least one \cqb.

If a signature $f$ is fixed then $\widetilde \catP_f$ and $\widetilde
\catQ_f$ are finite.  This is because from each class in $\widetilde
\catP_f$ we can always select an element $(\lambda, \epsilon, \mu)$
with $\mu \geq 0$.  In fact, this element is unique up to row
equivalence.  However, certain calculations are easier if we select a
different representative.  We will never actually use representatives
with $\mu_i \leq -\n$ in this exposition, but the fact that each class
is rich with representatives keeps notation simple and ensures a
framework for easily stating and proving the theorems in this section.
We observe here that each class in $\widetilde \catP_f$ is a union of
classes in $\ds \catP_f^{\text m}$.

We now explore the extent to which two \cqb{}s $(\lambda, \epsilon,
\mu)$ and $(\alpha, \beta, \gamma)$ can lie in different classes in
$\widetilde \catQ_f$.  By reordering we may assume $\alpha = \lambda$,
$\beta = \epsilon$, and that if $\mu_i \neq \gamma_i$ then $\mu_i \leq
0$ and $\gamma_i \leq 0$.  If $\mu_{i_0} > 0$ for some $i_0$ and
$\mu_i \leq 0$ then $\epsilon_i + [\lambda_i - \mu_i] = \epsilon_{i_0}
+ [\lambda_{i_0} - \mu_{i_0}]$, so $ [\mu_i] = \epsilon_i -
\epsilon_{i_0} + [\lambda_i - \lambda_{i_0} + \mu_{i_0}]$ and $-\n <
\mu_i \leq 0$.  But this uniquely determines $\mu_i$.  Therefore, if
$\mu_i > 0$ for some $i$ then there is only one equivalence class of
\cqb{}s in each element of $\widetilde \catQ_f$.  If, however, $\mu_i
\leq 0$ for each $i$ then the same calculation shows that $\mu$ is
fixed once we have chosen a value of $\mu_1$.  Thus, there are exactly
$\n$ (row equivalence classes of) \cqb{}s $(\lambda, \epsilon, \mu)$
satisfying $\mu \leq 0$, determined by $m = \epsilon_1 + [\lambda_1 -
\mu_1]$.

\begin{definition}  
  Let $(\lambda, \epsilon, \mu)$ be a marked colored partition and fix
  $m \in \zmod$.  For each $i$ let $\delta_i = \max \{k \in \setZ \mid
  k \leq \mu_i, \epsilon_i + [\lambda_i - k] = m\}$ and let $(\lambda,
  \tilde \mu)$ be the minimal bipartition corresponding to $(\lambda,
  \mu)$.  Define $\ds \rho_m: \catP_f^{\text m} \to \catP_f^{\text m}$
  by $\rho_m(\lambda, \epsilon, \mu) = (\lambda, \epsilon, \delta)$
  and $\ds \bar \rho:\catP_f^{\text m} \to \catP_f^{\text m}$ by $\bar
  \rho(\lambda, \epsilon, \mu) = (\lambda, \epsilon, \tilde \mu)$.
\end{definition}

It is clear that $\rho_m$ and $\bar \rho$ are simply processes that
produce a new marking of a given colored partition.  In terms of our
diagrams (with marks aligned), $\rho_m$ modifies the picture by
shifting each row to the right just until each column consists of a
single color and the column immediately to the left of the marks has
color $m$.  On the other hand, $\bar \rho$ shifts rows to the left
just far enough to produce a bipartition.  Note that $\rho_m \circ
\rho_m = \rho_m$ and that $(\rho_m \circ \bar \rho) (\lambda,
\epsilon, \mu) = (\lambda, \epsilon, \mu)$ if and only if $(\lambda,
\epsilon, \mu)$ is a \cqb and $\epsilon + [\lambda - \mu] = m$.  With
this notation, lemma \ref{lem:rho2} can be restated as follows:

\begin{lemma} \label{lem:rho} A marked colored partition $(\lambda,
  \epsilon, \mu)$ is a \cqb if and only if there is a color $m$
  satisfying $\epsilon + [\lambda - \mu] = m$ and $(\lambda, \epsilon,
  \mu) = \rho_m(\lambda, \epsilon, \delta)$ for some colored
  bipartition $(\lambda, \epsilon, \delta)$.  Moreover, among such
  markings $\delta$ of $\lambda$ there is a unique \emph{minimal}
  marking $\tilde \mu$ of $\lambda$ such that $\bar \rho(\lambda,
  \epsilon, \mu) = (\lambda, \epsilon, \tilde \mu)$ is a colored
  bipartition satisfying $\tilde \mu \leq \delta$ for each $i$.
\end{lemma}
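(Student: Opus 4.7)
The plan is to leverage lemma \ref{lem:rho2}, which already supplies the uncolored half of the story: it produces a canonical bipartition $\tilde\mu \geq \mu$, shows $\tilde\mu$ is minimal among bipartition markings $\geq\mu$, and characterizes \qb{[k]}'s by the inequality $0 \leq \tilde\mu - \mu < k$. What remains is to track the color data $\epsilon$ and relate the distance $\tilde\mu - \mu$ to the operation $\rho_m$, which only decreases a marking by steps that hit the color $m$ from the right, so $0 \leq \delta_i - \rho_m(\delta)_i < \n$ whenever $\rho_m$ is defined.

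For the forward implication I would assume $(\lambda, \epsilon, \mu)$ is a \cqb. The color compatibility $\epsilon_i + \nu_i \equiv \epsilon_j + \nu_j \pmod \n$ means exactly that $\epsilon_i + [\lambda_i - \mu_i]$ is the same element $m \in \zmod$ for every $i$, so $\epsilon + [\lambda - \mu] = m$. Set $\delta = \tilde\mu$ from lemma \ref{lem:rho2}; then $(\lambda, \tilde\mu)$ is a bipartition by (1), so $(\lambda, \epsilon, \tilde\mu)$ is a colored bipartition. To see $\rho_m(\lambda, \epsilon, \tilde\mu) = (\lambda, \epsilon, \mu)$, fix $i$ and look at the arithmetic progression $\{k \in \setZ \mid \epsilon_i + [\lambda_i - k] = m\}$, which has common difference $\n$; since $\mu_i$ lies in this progression and $0 \leq \tilde\mu_i - \mu_i < \n$ by lemma \ref{lem:rho2}(3), $\mu_i$ must be the largest element of the progression that is $\leq \tilde\mu_i$, which is exactly $\rho_m(\tilde\mu)_i$.

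For the reverse implication I would start from $(\lambda, \epsilon, \delta)$ a colored bipartition with $\rho_m(\lambda, \epsilon, \delta) = (\lambda, \epsilon, \mu)$ and $\epsilon + [\lambda - \mu] = m$. The latter immediately forces $\epsilon_i + \nu_i \equiv m \pmod \n$ for all $i$, giving the color congruence in definition \ref{def:mark}(5). For the \qb[\n] condition, the key observation is that $\mu \leq \delta$ with $\delta_i - \mu_i < \n$ by the defining formula of $\rho_m$; combining this with lemma \ref{lem:rho2}(2) gives $\mu \leq \tilde\mu \leq \delta$, whence $0 \leq \tilde\mu - \mu < \n$, and lemma \ref{lem:rho2}(3) finishes. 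One should also verify $-\n < \mu_i$: since $\delta_i \geq 0$ and $\delta_i - \mu_i \leq \n - 1$, we get $\mu_i \geq -(\n - 1) > -\n$.

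The ``moreover'' statement then falls out: $\bar\rho(\lambda, \epsilon, \mu) = (\lambda, \epsilon, \tilde\mu)$ by definition of $\bar\rho$, this is a colored bipartition by lemma \ref{lem:rho2}(1), and for any other colored bipartition marking $\delta$ with $\rho_m(\lambda, \epsilon, \delta) = (\lambda, \epsilon, \mu)$ we have $\delta \geq \mu$, so $\tilde\mu \leq \delta$ by lemma \ref{lem:rho2}(2). The main (and really the only) delicate point in the whole argument is the periodicity observation that pins down $\mu_i$ as the unique $\rho_m$-preimage of $\tilde\mu_i$ within distance $\n$; everything else is bookkeeping on top of lemma \ref{lem:rho2}.
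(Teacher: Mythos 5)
Your proposal is correct and matches the paper's intent exactly: the paper offers no separate proof, presenting the lemma as a restatement of lemma \ref{lem:rho2}, and your argument supplies precisely the translation that restatement requires (the equivalence of the color congruence with a single $m$, the periodicity observation that $0 \leq \tilde\mu_i - \mu_i < \n$ forces $\mu_i = \rho_m(\tilde\mu)_i$, and minimality of $\tilde\mu$ from lemma \ref{lem:rho2}(2)).
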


In other words, $\rho_m$ and $\bar \rho$ are inverse bijections
between the set of \cqb{}s (and their corresponding equivalence
classes) and the corresponding set of minimal colored bipartitions.
We will employ either of these sets as convenience dictates.

\subsection{Normal bases}

We now show how to construct an enhanced $K$-orbit from a marked
colored partition.

\begin{definition} \label{def:psi} Let $(\lambda, \epsilon, \mu)$ be a
  marked colored partition and let $x \in \orb_{\lambda, \epsilon}$.
  Let $\catB = \{v_{i,j}\}$ be a colored Jordan basis for $x$ of type
  $(\lambda, \epsilon)$.  Extend this notation, setting $v_{i,j} = 0$
  if $j \leq 0$.  Define $\Psi(\lambda, \epsilon, \mu) = \orb_{v, x}$,
  where $v = \sum_{i = 1}^{l(\lambda)} v_{i, \mu_i}$.
\end{definition}

It should be clear that $\ds \Psi: \widetilde \catP^{\text m}_{\xi(V)}
\to V \times \nil$ is well-defined.  As was mentioned above,
$\widetilde \catP^{\text m}_{\xi(V)}$ is finite.  Corollary
\ref{cor:dim} implies that if $\n = 2$ and $\dim V_0 = \dim V_1 = p$
then $\dim \nil = 2p^2 - p$, so $\dim V \times \nil = 2p^2 + p$.
Since $\dim K = 2 p^2$ there is no hope that $K \backslash (V \times
\nil)$ is finite, so $\Psi$ is clearly not surjective.  This is the
case in general if $\n > 1$.  We will, however, see that $\kvnil$ is
always contained in the image of $\Psi$.

Our goal now is to determine when two marked colored partitions are in
the same fiber of $\Psi$.  As might be guessed from the terminology
introduced earlier in this section, the answer is related to \cqb{}s.
We will see that if $\orb \in \kvnil$ then the fiber of $\Psi$ over
$\orb$ consists of a single class in $\widetilde Q_{\xi(V)}$.

\begin{definition} \label{def:normal} If $(v, x) \in V \times \nil$
  then a \emph{normal basis} for $(v, x)$ is a colored Jordan basis
  $\catB = \{v_{i,j}\}$ for $x$ such that if $(\lambda, \epsilon)$ is
  the type of $\catB$ then there is a marking $\mu$ of $\lambda$ such
  that
  \begin{enumerate}
  \item $\ds v = \sum_{i = 1}^{l(\lambda)} v_{i,\mu_i}$,
  \item $(\lambda, \epsilon, \mu)$ is a \gqb.
  \end{enumerate}
\end{definition}

In general, not every element of $V \times \nil$ admits a normal
basis.  In fact, if $(v, x)$ admits a normal basis with corresponding
\gqb $(\lambda, \epsilon, \mu)$ then $\Psi(\lambda, \epsilon, \mu) =
\orb_{v, x}$.  So, if $(v, x)$ admits a normal basis then $\orb_{v,
  x}$ is in the image of $\Psi$.  We will see that the converse is
true, as well: if $\orb_{v, x}$ is in the image of $\Psi$ then $(v,
x)$ admits a normal basis.  As a first step, we observe the following
lemma, which suggests that the existence of a normal basis is an
important orbit invariant.

\begin{lemma} \label{lem:normal} If $(v, x)$ admits a normal basis
  then so does each element of $\orb_{v, x}$.  Conversely, if $(v, x)$
  and $(w, y)$ each admit a normal basis corresponding to the same
  \gqb then $\orb_{v, x} = \orb_{w, y}$.
\end{lemma}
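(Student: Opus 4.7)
The plan is to treat the two directions separately, with the forward direction being a direct $K$-equivariance check and the converse an application of Proposition \ref{lem:K-basis}.

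For the forward direction I would take any $g \in K$ and show that if $\catB = \{v_{i,j}\}$ is a normal basis for $(v,x)$ witnessing the \gqb{} $(\lambda,\epsilon,\mu)$, then $g\catB := \{g v_{i,j}\}$ is a normal basis for $g\cdot(v,x) = (gv, gxg^{-1})$ witnessing the same $(\lambda,\epsilon,\mu)$. Since $g$ preserves each $V_i$, the new basis has the same color type; since $(gxg^{-1})(g v_{i,j}) = g(xv_{i,j})$ equals $g v_{i,j-1}$ when $j>1$ and $0$ when $j=1$, it is a colored Jordan basis for $gxg^{-1}$ of type $(\lambda,\epsilon)$; and $gv = \sum_i g v_{i,\mu_i}$ shows the marking $\mu$ is preserved. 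The \gqb{} condition is a property of $(\lambda,\epsilon,\mu)$ alone and so is unchanged.

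For the converse, suppose $\catB = \{v_{i,j}\}$ is a normal basis for $(v,x)$ and $\catB' = \{v'_{i,j}\}$ is a normal basis for $(w,y)$, both corresponding to the same equivalence class $(\lambda,\epsilon,\mu)$. After a preliminary reindexing of rows within each basis, I may assume both bases are indexed by literally the same triple $(\lambda,\epsilon,\mu)$ (this is legitimate because equivalence of \gqb{}s is precisely permutation of rows sharing the same length, and such a permutation of indices reorders the corresponding chains in each Jordan basis without altering any of the defining properties). Now $\catB$ and $\catB'$ are colored bases of $V$ of the common type $(\lambda,\epsilon)$, so Proposition \ref{lem:K-basis} supplies $g \in K$ with $g v_{i,j} = v'_{i,j}$ for every $(i,j)$. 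Comparing the Jordan relations $x v_{i,j} = v_{i,j-1}$ and $y v'_{i,j} = v'_{i,j-1}$ (with $v_{i,0}=v'_{i,0}=0$) on the basis $\catB'$ forces $g x g^{-1} = y$, and condition (1) of Definition \ref{def:normal} gives $g v = \sum_i g v_{i,\mu_i} = \sum_i v'_{i,\mu_i} = w$. Thus $g\cdot(v,x) = (w,y)$, so $\orb_{v,x}=\orb_{w,y}$.

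The only point requiring any care is the reindexing step at the start of the converse: one must be sure that rearranging the rows of a normal basis (to match the row-ordering of another representative of the equivalence class) still yields a normal basis. This is immediate because the conditions of Definition \ref{def:normal}---colored Jordan basis, the sum $v = \sum_i v_{i,\mu_i}$, and the \gqb{} property---are all invariant under simultaneous permutation of the row index $i$ in $\catB$, in $\lambda$, in $\epsilon$, and in $\mu$. Everything else is mechanical verification on basis elements.
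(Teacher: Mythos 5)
Your proof is correct and is exactly the argument the paper intends: the forward direction is the observation that $k\cdot\catB$ is a normal basis for $k\cdot(v,x)$ with the same \gqb{}, and the converse is the "obvious change of basis," which you rightly ground in Proposition \ref{lem:K-basis} and then verify intertwines $x$ with $y$ and sends $v$ to $w$. The paper's proof is a two-line version of the same thing; your expansion, including the careful remark about reindexing rows, fills in the details faithfully.
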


\begin{proof}
  Let $\catB$ be a normal basis for $(v, x)$ with corresponding \gqb
  $(\lambda, \epsilon, \mu)$.  If $k \cdot (v, x) = (w, y)$ then $k
  \cdot \catB$ is a normal basis for $(w, y)$ with corresponding \gqb
  $(\lambda, \epsilon, \mu)$.  Conversely, if we fix normal bases for
  $(v, x)$ and $(w, y)$ corresponding to the same \gqb then the
  obvious change of basis transformation lies in $K$.
\end{proof}

\begin{definition}
  Let $(\lambda, \epsilon, \mu)$ be a marked colored partition and let
  $\catB = \{v_{i,j}\}$ be a colored basis of type $(\lambda,
  \epsilon)$.  Then we write \[ \catB^\mu = \{v_{i,j} \in \catB \mid 1
  \leq j \leq \mu_i \}.\]
\end{definition}

\begin{lemma} \label{lem:sum} Let $x \in \nil$ have a colored Jordan
  basis $\catB = \{ v_{i,j} \}$ of type $(\lambda, \epsilon)$ and let
  $\mu$ be a marking of $\lambda$.  Then
  \begin{enumerate}
  \item $\Span{\catB^\mu}$ is colored and $x$-stable;
  \item $\catB^\mu$ is a colored Jordan basis for $x
    |_{\Span{\catB^\mu}}$ of type $(\mu, \epsilon + [\lambda - \mu])$;
  \item $\catB \setminus \catB^\mu$ is a colored Jordan basis for
    $x|_{V / {\Span{\catB^\mu}}}$ of type $(\lambda - \mu, \epsilon)$;
  \item If $\mu_i \in \{0, \lambda_i\}$ for each $i$ then
    $\Span{\catB^{\lambda - \mu}}$ is $x$-stable and $x
    |_{\Span{\catB^{\lambda - \mu}}}$ and $x |_{V / \Span{\catB^\mu}}$
    have the same colored Jordan type.
  \end{enumerate}
\end{lemma}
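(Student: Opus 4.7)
The plan is to read everything off the labeling of the colored Jordan basis $\catB = \{v_{i,j}\}$ of type $(\lambda, \epsilon)$, on which $x$ acts by $xv_{i,j} = v_{i,j-1}$ (with the convention $v_{i,0} = 0$) and $\chi(v_{i,j}) = \epsilon_i + [\lambda_i - j]$. For (1), $\catB^\mu \subset \catB$ consists of colored vectors, so proposition \ref{lem:csub} gives that $\Span{\catB^\mu}$ is colored; $x$-stability follows because for each $v_{i,j} \in \catB^\mu$ we have $1 \leq j \leq \mu_i$, hence $xv_{i,j} = v_{i,j-1} \in \catB^\mu \cup \{0\}$.

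For (2), I keep the labeling inherited from $\catB$ and observe $\chi(v_{i,j}) = \epsilon_i + [\lambda_i - j] = (\epsilon + [\lambda - \mu])_i + [\mu_i - j]$, so after reordering rows so that $\mu$ is monotonic (permitted by the row-equivalence discussion preceding lemma \ref{lem:sig}), $\catB^\mu$ is a basis of type $(\mu, \epsilon + [\lambda - \mu])$. It is a colored Jordan basis because $\ker(x|_{\Span{\catB^\mu}}) = \ker x \cap \Span{\catB^\mu}$ has basis $\{v_{i,1} \mid \mu_i \geq 1\} \subset \catB^\mu$, so lemma \ref{lem:jordan} applies. For (3), setting $w_{i,k} := v_{i,\mu_i + k} + \Span{\catB^\mu}$ for $1 \leq k \leq \lambda_i - \mu_i$ yields a basis of the quotient (its preimages together with $\catB^\mu$ give $\catB$); the induced $\bar x$ satisfies $\bar x w_{i,1} = v_{i,\mu_i} + \Span{\catB^\mu} = 0$ and $\bar x w_{i,k} = w_{i,k-1}$ for $k > 1$, while $\chi(w_{i,k}) = \chi(v_{i,\mu_i + k}) = \epsilon_i + [(\lambda_i - \mu_i) - k]$, confirming type $(\lambda - \mu, \epsilon)$ after reindexing.

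For (4), I apply (1) and (2) with $\lambda - \mu$ in place of $\mu$: $\Span{\catB^{\lambda - \mu}}$ is $x$-stable and $\catB^{\lambda - \mu}$ is a colored Jordan basis of $x|_{\Span{\catB^{\lambda-\mu}}}$ of type $(\lambda - \mu, \epsilon + [\mu])$. By (3), $x|_{V/\Span{\catB^\mu}}$ has colored Jordan type $(\lambda - \mu, \epsilon)$. Under the hypothesis $\mu_i \in \{0, \lambda_i\}$, the rows that contribute to $\lambda - \mu$ are exactly those with $\mu_i = 0$, for which $\epsilon_i + [\mu_i] = \epsilon_i$; the rows with $\mu_i = \lambda_i$ give zero-length contributions and drop out of the equivalence class. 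Hence the two colored Jordan types coincide. The only real obstacle in this argument is keeping the color-index shifts consistent under the reindexing that turns substrings of rows into genuine colored partitions, which is precisely the bookkeeping the framework of colored partitions was set up to handle; no genuine difficulty arises.
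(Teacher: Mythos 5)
Your proof is correct and is exactly the direct verification the paper leaves to the reader (the lemma is stated without proof): read off the action of $x$ and the color function on the labeled basis, check $x$-stability and the kernel condition, and match types up to row reordering. The only caveat, which the paper itself glosses over via its convention that a mark $\mu_i \leq 0$ behaves like the leftmost wall, is that for $\mu_i < 0$ the stated types must be read with $\mu_i$ replaced by $0$; your bookkeeping is at the same level of rigor as the surrounding text, and part (4) only needs $\mu \geq 0$, which its hypothesis guarantees.
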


We may speak of \emph{deleting} a row or collection of rows from a
partition, colored partition, or marked colored partition.  Let
$\iota_k: \setN \to \setN$ be defined by \[\iota_k(i) = \begin{cases}
  i & i < k \\ i + 1 & i \geq k.
\end{cases}\] To delete row $k$ from $(\lambda, \epsilon, \mu)$ is to
construct $\Delta_k(\lambda, \epsilon, \mu) = (\lambda \circ \iota_k,
\epsilon \circ \iota_k, \mu \circ \iota_k)$.  The deletion of row $k$
from a partition or colored partition is performed analogously.  If $S
\subset \setN$ is finite, we may delete from $(\lambda, \epsilon,
\mu)$ all the rows indexed by elements of $S$ in the obvious way: Let
$a_1 < a_2 < \dots < a_r$ be the elements of $S$.  We simply construct
$\Delta_S(\lambda, \epsilon, \mu) = (\lambda \circ \iota, \epsilon
\circ \iota, \mu \circ \iota)$, where $\iota = \iota_{a_1} \circ
\cdots \circ \iota_{a_r}$.  The order of the composition is
significant here, because $\iota_k$ and $\iota_{k'}$ do not commute if
$k \neq k'$.  If $k' < k$ then $\iota_k \circ \iota_{k'} =
\iota_{k'+1} \circ \iota_k$.

\begin{lemma} \label{lem:sub} Let $\catB = \{v_{i,j}\}$ be a normal
  basis for $(v, x) \in V \times \nil$ with corresponding \gqb
  $(\lambda, \epsilon, \mu)$ and let $S \subset \sub{l(\lambda)}$.  If
  we set $\mu_i = \begin{cases} \lambda_i & i \in S \\ 0 & i \not \in
    S.  \end{cases}$ and $A = \Span{\catB^\mu}$ then $\catB^{\lambda -
    \mu}$ is a normal basis for $(v + A, \bar x) \in (V /A) \times
  \nil(V / A)$ with corresponding \gqb $\Delta_S (\lambda, \epsilon,
  \mu)$.
\end{lemma}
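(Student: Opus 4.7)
The plan is to reduce everything to Lemma \ref{lem:sum} applied to the auxiliary marking $\mu'$ of $\lambda$ defined by $\mu'_i = \lambda_i$ for $i \in S$ and $\mu'_i = 0$ otherwise, so that $A = \Span{\catB^{\mu'}}$. Since $\mu'_i \in \{0, \lambda_i\}$, part (1) of that lemma gives that $A$ is an $x$-stable colored subspace, and part (3) gives that $\catB \setminus \catB^{\mu'} = \catB^{\lambda - \mu'}$ is a colored Jordan basis for $\bar x = x|_{V/A}$ of type $(\lambda - \mu', \epsilon)$. The rows of $(\lambda - \mu', \epsilon)$ indexed by $i \in S$ have length zero, so after discarding them and reindexing via $\iota = \iota_{a_1} \circ \cdots \circ \iota_{a_r}$ this basis has type $\Delta_S(\lambda, \epsilon) = (\lambda \circ \iota, \epsilon \circ \iota)$.

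Next I would verify the sum condition for a normal basis. Writing the hypothesis $v = \sum_{i=1}^{l(\lambda)} v_{i,\mu_i}$ and observing that for $i \in S$ the vector $v_{i,\mu_i}$ lies in $A$---either because $\mu_i \leq 0$ forces $v_{i,\mu_i} = 0$ or because $\mu_i > 0$ makes $v_{i,\mu_i}$ one of the basis vectors spanning $A$---we obtain
\[ v + A \;=\; \sum_{i \notin S} v_{i,\mu_i} + A \;=\; \sum_{k=1}^{l(\lambda \circ \iota)} \bigl( v_{\iota(k),\, \mu_{\iota(k)}} + A \bigr). \]
This exhibits $v + A$ as the prescribed sum along the basis $\catB^{\lambda - \mu'}$ of $V/A$, with marking component $\mu \circ \iota$.

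Finally I would check that $\Delta_S(\lambda, \epsilon, \mu) = (\lambda \circ \iota, \epsilon \circ \iota, \mu \circ \iota)$ is still a \gqb. The conditions in Definition \ref{def:mark}(6) are either single-row ($-\n < \mu_i$) or inequalities on pairs $i < j$ conditional on $\epsilon_i + \nu_i \equiv \epsilon_j + \nu_j \pmod \n$. Because $\iota$ is strictly increasing, each pair $k < k'$ in the new indexing is the image of a pair $\iota(k) < \iota(k')$ in the old, and the relevant congruences and inequalities restrict directly; nothing new can fail. Combining the three points, $\catB^{\lambda - \mu'}$ is a normal basis for $(v + A, \bar x)$ with associated \gqb $\Delta_S(\lambda, \epsilon, \mu)$.

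I expect the only real pitfall to be bookkeeping: the auxiliary marking $\mu'$ used to carve out $A$ is \emph{not} the marking $\mu$ that reconstructs $v$, and one must keep the two straight when invoking Lemma \ref{lem:sum} so that the resulting type of $\bar x$ on $V/A$ is $\Delta_S(\lambda, \epsilon)$ rather than $(\lambda - \mu, \epsilon)$. Once this distinction is made explicit, the lemma is essentially a direct corollary of Lemma \ref{lem:sum} together with the row-by-row character of the \gqb conditions.
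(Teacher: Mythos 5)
The paper states Lemma \ref{lem:sub} without proof, so there is nothing to compare against directly; your argument supplies exactly the intended details. It is correct: Lemma \ref{lem:sum} (with $\mu'_i \in \{0,\lambda_i\}$) handles the quotient Jordan basis and its type, the decomposition of $v+A$ works because the $S$-rows of $v$ land in $A$, and the \gqb{} conditions are row-wise and pair-wise, hence inherited under the order-preserving reindexing $\iota$. You also correctly identify and resolve the one genuine subtlety, namely that the marking carving out $A$ is not the marking $\mu$ appearing in $\Delta_S(\lambda,\epsilon,\mu)$.
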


\begin{theorem} \label{thm:param} \
  \begin{enumerate}
  \item The image of $\Psi$ (definition \ref{def:psi}) is precisely
    the set of enhanced $K$-orbits whose elements admit a normal basis
    (definition \ref{def:normal}).  That is, each fiber of $\Psi$
    contains a \gqb (definition \ref{def:mark}).
  \item $\Psi: \widetilde \catQ_{\xi(V)} \to \kvnil$ is a bijection.
    That is,
    \begin{enumerate}
    \item If $\orb \in K \backslash (V \times \nil)$ then $\orb \in
      \kvnil$ if and only if there is a \cqb $(\lambda, \epsilon,
      \mu)$ such that $\Psi(\lambda, \epsilon, \mu) = \orb$;
    \item If $(v, x) \in \vnil$ and $v \neq 0$ then any two \cqb{}s
      that correspond to $\orb_{v, x}$ are identical, up to permuting
      rows.
    \item If $x \in \nil$ and $v = 0$ then the \cqb{}s corresponding
      to $\orb_{v, x}$ are precisely $\rho_m(\lambda, \epsilon, 0)$,
      where $(\lambda, \epsilon)$ is the colored Jordan type of $x$.
    \end{enumerate}
  \end{enumerate}
\end{theorem}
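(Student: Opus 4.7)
The theorem splits naturally into two assertions: first, that every orbit in $\kvnil$ admits a representative with a normal basis, and second, that the resulting parametrization $\Psi: \widetilde\catQ_{\xi(V)} \to \kvnil$ is a bijection. My plan is to handle these in sequence.

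For existence of a normal basis, I would induct on $\dim V$. If $v = 0$, any colored Jordan basis provided by proposition~\ref{prop:colbasis} with marking $\mu \equiv 0$ works; indeed $(\lambda, \epsilon, 0)$ is automatically a \gqb because $-\n < 0$, $\mu_j = 0 < \n = \mu_i + \n$, and $\nu_j = \lambda_j \leq \lambda_i = \nu_i < \nu_i + \n$ by the decreasing order of $\lambda$. For $v \neq 0$, the strategy is to place $v$ as a single basis element of a carefully chosen Jordan block: I select a colored $u$ with $v \in \setF[x](u)$ and $d_x(u)$ maximal subject to this constraint (possible since $u = v$ is a candidate). Lemma~\ref{lem:jordanlabel} ensures $W = \setF[x](u)$ is $x$-stable and colored, admits $\catB_{u,x}$ as a colored Jordan basis, and via part~(6) we may adjust $u$ so that $v = x^{d_x(u) - d_x(v)} u$ is a specific basis element. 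An $x$-stable colored complement $U$ is provided by the corollary to proposition~\ref{prop:colbasis}. Applying the induction to $(0, x|_U)$ and concatenating bases yields a colored Jordan basis for $x$ with marking $\mu$ that has a single nonzero entry $d = d_x(v)$ in the row corresponding to $W$.

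The heart of the argument is verifying this marking is a \gqb (possibly after permuting rows of equal length), and I expect this to be the main obstacle. The only nontrivial conditions concern pairs with matching color class $\epsilon_i + \nu_i \equiv \chi(v) \pmod \n$ involving the distinguished row $i^*$. The key technical claim is that if some row $i$ of matching color class satisfied $\nu_i + \n \leq \nu_{i^*}$ (equivalently, $\lambda_i$ sufficiently large), one could form a colored vector $u' = u + c\,x^a u_i$ with $a$ chosen so $x^a u_i$ has the same color as $u$, giving $d_x(u') > d_x(u)$ while retaining $v \in \setF[x](u')$ after suitable modification, contradicting maximality. Pinning down the precise choice of $u$ and the bookkeeping needed to guarantee the full \gqb condition in all cases is combinatorially delicate and may require a finer selection strategy or a secondary induction.

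For the bijection claim, uniqueness is the remaining content. Suppose $\Psi(\lambda, \epsilon, \mu) = \Psi(\lambda', \epsilon', \mu') = \orb_{v, x}$ with both arguments \cqb{}s. Theorem~\ref{thm:paramc} applied to $x$ forces $(\lambda, \epsilon) = (\lambda', \epsilon')$ up to row equivalence, so we may assume they coincide. For $v \neq 0$ (case~(2b)), the marking is determined by color-graded invariants of $(v, x)$ such as $\xi(\setF[x](v) \cap \ker x^k)$, which are $K$-invariant and hence constant on the orbit; combined with the \cqb condition that $\epsilon_i + \nu_i$ is constant $\pmod \n$, these invariants pin down $\mu$ up to row-equivalence of same-length rows. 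For $v = 0$ (case~(2c)), the identity $0 = v = \sum v_{i, \mu_i}$ forces $\mu_i \leq 0$ for every $i$ by linear independence of distinct Jordan basis elements; the \cqb condition $\epsilon_i + [\lambda_i - \mu_i] \equiv m \pmod \n$ then uniquely determines each $\mu_i \in (-\n, 0]$ given the common color $m$, producing precisely the $\n$ classes $\rho_m(\lambda, \epsilon, 0)$. Surjectivity of $\Psi$ onto $\kvnil$ then follows from existence of normal bases together with the $\bar\rho$ construction of lemma~\ref{lem:rho}, which converts any \gqb into an equivalent \cqb in the same $\widetilde\catQ$-class.
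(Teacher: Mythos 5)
There is a genuine gap, and it sits exactly where you flag uncertainty: the construction of a normal basis. Your strategy of realizing $v$ as a single basis element of one Jordan block cannot work, even after permuting rows or optimizing the choice of $u$. Take $\n = 1$, $V = \setC^2$, $x = 0$, $v \neq 0$. Every cyclic subspace $\setF[x](u)$ is one-dimensional, so your construction forces the marking $\mu = (1,0)$ on $\lambda = (1,1)$; but then $\nu = (0,1)$ violates $\nu_2 < \nu_1 + 1$, so $(\lambda, \mu)$ is not a \gqb[1] (i.e., not a bipartition), and no reordering helps since both rows have the same length. The orbit's actual parameter is $\mu = (1,1)$, which requires writing $v = v_{1,1} + v_{2,1}$, i.e., spreading $v$ across two Jordan blocks. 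Your proposed rescue (replacing $u$ by $u + c\,x^a u_i$ to contradict maximality of $d_x(u)$) does not apply: here $d_x(u) = 1$ is already maximal, and the violated inequality concerns the non-distinguished row, not the case you consider. The paper's proof avoids single-block placement entirely: it starts from an arbitrary expansion $v = \sum_i v_{i,\mu_i}$ and performs explicit colored changes of Jordan basis that shift one mark at a time to the right by $\n$ positions, with termination guaranteed because $\sum_i \mu_i$ strictly increases and is bounded by $|\lambda|$.

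The uniqueness argument for $v \neq 0$ is also incomplete as stated. The invariants $\xi(\setF[x](v) \cap \ker x^k)$ record essentially only $d_x(v)$ and $\chi(v)$ (a cyclic subspace is a single hook), and these do not determine the marking: for $\n = 1$ and $\lambda = (3,1)$ the bipartitions $\mu = (2,1)$ and $\mu = (2,0)$ give distinct orbits with the same $d_x(v) = 2$. The paper instead shows that the \qb{} inequalities force $\mu_k = \dim \setF[x](v)$ for one canonically chosen row $k$ (the first row with $\epsilon_k + [\lambda_k] = [\dim \setF[x](v)] + \chi(v)$), deletes that row via lemma \ref{lem:sub}, and inducts on the quotient $V/A$; you would need either this deletion--induction or a richer family of invariants such as $\xi(E^x v \cap \ker x^k)$, which is what proposition \ref{prop:exv} later supplies. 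Your treatment of the case $v = 0$ and your reduction of surjectivity to the existence of normal bases are correct and consistent with the paper.
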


\begin{proof}
  We use the proof in \cite{AH} as a model.  In fact, the only
  obstacle to following this proof exactly is that we must be careful
  to preserve the colored structure of $V$.  The procedure described
  below gives a simple algorithm for producing the \gqb associated to
  $(v, x) \in \vnil$.

  To prove (1) we observe, first of all, that if $\orb_{v, x} =
  \Psi(\lambda, \epsilon, \mu)$ then we can trivially assume that
  $\mu_i > -\n$ for each $i$.  Let $\catB = \{v_{i,j}\}$ be a colored
  Jordan basis for $x$ of type $(\lambda, \epsilon)$ such that $v =
  \sum v_{i, \mu_i}$.  We will iteratively modify $\catB$ until $\mu_j
  < \mu_i + \n$ and $\nu_j < \nu_i + \n$ for each $i < j$ such that
  $\epsilon_i + [\nu_i] = \epsilon_j + [\nu_j]$.  Suppose there exists
  a pair $i < j$ that fails.  Note that, since $\mu_i + \nu_i =
  \lambda_i$ and $\lambda_i \geq \lambda_j$, we cannot have both
  $\mu_i + \n \leq \mu_j$ and $\nu_i + \n \leq \nu_j$.
        
  If $\mu_i + \n \leq \mu_j$ then for each $r$ define
  \begin{align*}
    \ds w_{k,r} &= \begin{cases}
      v_{i, \mu_i + \n} + v_{i, \mu_i} & k = i \\
      v_{j,r} - v_{i, r - \mu_j + \mu_i + n} & k = j \\
      v_{k, r} & k \neq i, j.
    \end{cases}
  \end{align*}
  Then $\{w_{i,j}\}$ is a colored Jordan basis for $x$ of type
  $(\lambda, \epsilon)$ and \[v = \sum_k v_{k, \mu_k} = \sum_{k \neq
    i} w_{k, \mu_k} + w_{i, \mu_i + \n}.\] Therefore, we have
  effectively redefined $\mu_i$ to be $\mu_i + \n$, leaving $\mu$
  otherwise unchanged.  Pictorially, we have moved the mark in row $i$
  to the right by $\n$ positions.

  If $\nu_i + \n \leq \nu_j$, define
  \begin{align*}
    \ds w_{k,r} &= \begin{cases}
      v_{i,r} - v_{j, r - \mu_i + \mu_j + n} & k = i \\
      v_{j, \mu_j + \n} + v_{j, \mu_j} & k = j \\
      v_{k, r} & k \neq i, j.
    \end{cases}
  \end{align*}
  By similar reasoning, this effectively redefines $\mu_j$ to be
  $\mu_j + \n$.  Pictorially, we have moved the mark in row $j$ to the
  right by $\n$ positions.

  We repeat this step as long as it is possible.  The condition
  $\epsilon_i + [\nu_i] = \epsilon_j + [\nu_j]$ ensures that this
  change of basis can be accomplished by an element of $K$.  The
  condition $\lambda_i \geq \lambda_j$ plus $\mu_i + \n \leq \mu_j$
  (resp.  $\nu_i + \n \leq \nu_j$) ensures that each iteration results
  in a valid marking of $\lambda$, i.e., $\mu_i \leq \lambda_i$ for
  each $i$.  Each iteration also increases the quantity $\sum_{i,
    \lambda_i > 0} \mu_i \leq |\lambda|$, so this process must
  eventually terminate, yielding the appropriate inequalities.  Note
  that each iteration also preserves the quantity $\epsilon + [\lambda
  - \mu]$.

  To prove (a) we fix $(v, x) \in \vnil$ and let $\catB = \{v_{i,j}\}$
  be a colored Jordan basis for $x$ of type $(\lambda, \epsilon)$.  If
  $v = 0$ then $\orb_{v, x} = \Psi(\rho_m(\lambda, \epsilon, 0))$ for
  each $m$.  Otherwise, set $m = \chi(v)$, $v = \sum_{i,j} a_{i,j}
  v_{i,j}$, and $v_i = \sum_j a_{i,j} v_{i,j}$.  By applying (6) from
  lemma \ref{lem:jordan} to each Jordan block, noting that $v_i$ is
  colored, we may assume that $v_{i, \lambda_i}$ is colored and $v_i =
  x^{\nu_i} v_{i, \lambda_i}$ for some $0 \leq \nu_i \leq \lambda_i$.
  If $v_i \neq 0$ then $\chi(v_i) = m$.  Otherwise, redefine $\nu_i =
  \min\{t \in \setZ \mid t \geq \lambda_i, \epsilon_i + [t] = m \}$.
  Then $\Psi(\lambda, \epsilon, \mu) = \orb_{v, x}$, where $\mu =
  \lambda - \nu$.  Note that by construction we have $\epsilon +
  [\lambda - \mu] = m$, so the algorithm in (1) yields a \cqb.

  We now wish to show that $\ds \Psi|_{\widetilde \catQ_{\xi(V)}}$ is
  injective.  Let $(v, x) \in \vnil$ and let $\catB = \{v_{i,j}\}$ be
  a normal basis for $(v, x)$ with \cqb $(\lambda, \epsilon, \mu)$.
  Since $v = \sum_i v_{i, \mu_i}$ it is clear that if $v = 0$ then
  $\mu_i \leq 0$.  But if a color $m$ is fixed then for each $i \in
  \setN$ there is a unique $\mu_i$ satisfying $- \n < \mu_i \leq 0$
  and $\epsilon_i + [\lambda_i] - [\mu_i] = m$, so $(\lambda,
  \epsilon, \mu) = \rho_m(\lambda, \epsilon, 0)$.  As $m$ varies,
  these \cqb{}s all lie in the same equivalence class in $\widetilde
  \catQ_{\xi(V)}$ and (c) is proved.

  We may, therefore, assume that $v \neq 0$.  Since $v = \sum_i v_{i,
    \mu_i}$, lemma \ref{lem:jordan} implies that $\dim \setF[x](v) =
  \max \{\mu_i \mid 1 \leq i \leq l(\lambda)\}$.  Therefore, there is
  an integer $i$ with $\mu_i = \dim \setF[x](v)$.  Since $(\lambda,
  \epsilon, \mu)$ is an \qb we have $[\dim \setF[x](v)] = [\mu_i] =
  \epsilon_i + [\lambda_i] - \chi(v)$.  We can, therefore, set $k =
  \min\{i \mid \epsilon_i + [\lambda_i] = [\dim \setF[x](v)] +
  \chi(v)\}$, noting that this expression is independent of $\mu$.  By
  congruence there is an integer $j$ such that $\mu_k = \mu_i + j \n$.
  But $k \leq i$, so $\mu_k + \n > \mu_i$, so $j \n > -\n$, i.e., $j >
  -1$, hence $j \geq 0$ and $\mu_k \geq \mu_i = \dim \setF[x](v)$.
  But maximality of $\mu_i$ forces $\mu_k \leq \mu_i = \dim
  \setF[x](v)$.  Therefore, $\mu_k = \dim \setF[x](v)$.  In other
  words, the marking of the longest row of $(\lambda, \epsilon)$
  satisfying $\epsilon_k + [\lambda_k] = [\dim \setF[x](v)] + \chi(v)$
  is forced upon us.
  
  Set $S= \{k\}$ and build $A$ as in lemma \ref{lem:sub}.  Then
  $\Delta_k(\lambda, \epsilon, \mu)$ is a \cqb that corresponds to
  $x|_{V / A}$.  Inductively, the \cqb corresponding to $x|_{V/A}$ is
  unique, so $\mu_i$ is also completely determined if $i \neq k$.
  There is one case that must be considered carefully.  If $v \in A$
  then $v + A \in V / A$ is the zero vector.  We saw above that there
  are $\n$ markings $\delta$ of $\Delta_k(\lambda, \epsilon)$ that are
  valid in this case. However, there is only one satisfying $\epsilon
  + [\lambda - \delta] = m$, proving (b).
\end{proof}

\begin{corollary}
  If $m \in \zmod$ is fixed then $K \backslash (V_m \times \nil)$ is
  in bijection with the set of \cqb{}s $(\lambda, \epsilon, \mu)$ of
  signature $\xi(V)$ such that $\epsilon + [\lambda - \mu] = m$, via
  the map $\Psi$.

\end{corollary}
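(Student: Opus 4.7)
The plan is to extract this statement from Theorem \ref{thm:param} by tracking the color $\chi(v)$ through the bijection $\Psi$. The key observation is that the value $m = \epsilon + [\lambda - \mu] \in \zmod$ attached to any \cqb coincides with $\chi(v)$ for the corresponding distinguished vector $v = \sum_i v_{i,\mu_i}$.

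First I would check that $\Psi$ restricts appropriately. If $(\lambda, \epsilon, \mu)$ is a \cqb with $\epsilon + [\lambda - \mu] = m$, then for each $i$ with $\mu_i > 0$ the basis vector $v_{i, \mu_i}$ lies in $V_{\epsilon_i + [\lambda_i - \mu_i]} = V_m$, while $v_{i, \mu_i} = 0$ when $\mu_i \leq 0$ by the convention in Definition \ref{def:psi}. Thus $v = \sum_i v_{i, \mu_i} \in V_m$, so $\Psi(\lambda, \epsilon, \mu) \in K \backslash (V_m \times \nil)$.

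Next I would verify surjectivity onto $K \backslash (V_m \times \nil)$. Given $(v, x) \in V_m \times \nil$, apply Theorem \ref{thm:param}(1) to produce a \cqb $(\lambda, \epsilon, \mu)$ with $\Psi(\lambda, \epsilon, \mu) = \orb_{v, x}$. Inspecting the algorithm in the proof of that theorem: if $v \neq 0$ the construction sets $m = \chi(v)$ and produces a marking with $\epsilon + [\lambda - \mu] = m$ (this is noted explicitly at the end of step (a) of the proof); if $v = 0$, the \cqb $\rho_m(\lambda, \epsilon, 0)$ works and satisfies $\epsilon + [\lambda - \mu] = m$ by construction of $\rho_m$.

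For injectivity, suppose $(\lambda, \epsilon, \mu)$ and $(\alpha, \beta, \gamma)$ are \cqb{}s with $\epsilon + [\lambda - \mu] = m = \beta + [\alpha - \gamma]$ mapping to the same orbit. If $v \neq 0$ in that orbit, Theorem \ref{thm:param}(2b) gives row-equivalence, hence equality as classes. If $v = 0$, part (2c) shows both are of the form $\rho_{m'}(\lambda, \epsilon, 0)$ for some $m'$, but the condition $\epsilon + [\lambda - \mu] = m$ forces $m' = m$ in both cases, so the two \cqb{}s coincide up to row permutation. This establishes the restricted bijection, which is exactly the corollary.

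There is no real obstacle here; the statement is a direct specialization of the preceding theorem, relying on the fact already embedded in Definition \ref{def:mark}(5) that the \cqb condition forces $\epsilon_i + \nu_i$ to take a common value mod $\n$, which is precisely the color of the associated vector $v$.
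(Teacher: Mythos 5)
Your proposal is correct and is essentially the argument the paper intends: the corollary is the specialization of Theorem \ref{thm:param} to the fiber $V_m \times \nil$, using the observation that $\chi(v) = \epsilon + [\lambda - \mu]$ for the distinguished vector and that fixing $m$ selects exactly one of the $\n$ markings in the $v = 0$ case. No gaps.
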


\begin{corollary}
  Let $(\lambda, \epsilon)$ be a colored partition and let $m \in
  \zmod$.  Then
  \begin{enumerate}
  \item $\ds V_m \times \orb_{\lambda, \epsilon} =
    \bigsqcup_{\substack{\mu \\ \epsilon + [\lambda - \mu] = m \\
        (\lambda, \epsilon, \mu) \in \catQ_{\xi(V)}}} \orb_{\lambda,
      \epsilon, \mu}$
  \item $\ds \orb_{\rho_m(\lambda, \epsilon, 0)} = \{0\} \times
    \orb_{\lambda, \epsilon} \cong \orb_{\lambda, \epsilon}$.
  \end{enumerate}
\end{corollary}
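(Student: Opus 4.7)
The plan is to derive both parts as direct consequences of Theorem \ref{thm:param} together with the corollary immediately preceding the one to be proved.

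For part (1), I begin by noting that $V_m \times \orb_{\lambda, \epsilon}$ is stable under the diagonal action of $K$, since $K$ preserves $V_m$ and acts on the nilpotent orbit $\orb_{\lambda, \epsilon}$. Hence this set decomposes as a disjoint union of $K$-orbits in $V_m \times \nil$. By the preceding corollary, every such orbit has the form $\orb_{\alpha, \beta, \nu}$ for a unique \cqb $(\alpha, \beta, \nu)$ of signature $\xi(V)$ satisfying $\beta + [\alpha - \nu] = m$. Containment in $V_m \times \orb_{\lambda, \epsilon}$ means the nilpotent component lies in $\orb_{\lambda, \epsilon}$, which, combined with Theorem \ref{thm:paramc}, forces $(\alpha, \beta)$ to equal $(\lambda, \epsilon)$ up to row-equivalence. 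Conversely, given any \cqb $(\lambda, \epsilon, \mu)$ with $\epsilon + [\lambda - \mu] = m$, Definition \ref{def:psi} produces $v = \sum_i v_{i, \mu_i}$ in which each nonzero summand has color $\epsilon_i + [\lambda_i - \mu_i] = m$, so $v \in V_m$ and hence $\orb_{\lambda, \epsilon, \mu} \subset V_m \times \orb_{\lambda, \epsilon}$. Distinct \cqb{}s yield distinct orbits by the preceding corollary, making the union disjoint.

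For part (2), I apply $\Psi$ to $\rho_m(\lambda, \epsilon, 0)$. By the construction of $\rho_m$, the resulting marking $\delta$ satisfies $\delta_i \leq 0$ and $\epsilon_i + [\lambda_i - \delta_i] = m$ for every $i$. Definition \ref{def:psi} uses the convention $v_{i,j} = 0$ for $j \leq 0$, so $v = \sum_i v_{i, \delta_i} = 0$. Therefore $\Psi(\rho_m(\lambda, \epsilon, 0)) = \orb_{0, x}$ for any $x \in \orb_{\lambda, \epsilon}$, and since $0$ is $K$-fixed, this orbit equals $\{0\} \times \orb_{\lambda, \epsilon}$. The $K$-equivariant isomorphism with $\orb_{\lambda, \epsilon}$ is then immediate via projection onto the second factor.

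The only point requiring care is the shape-determination step in part (1): knowing the nilpotent component lies in $\orb_{\lambda, \epsilon}$ must force the underlying colored partition of the \cqb to be $(\lambda, \epsilon)$. This is not a serious obstacle—it follows because the nilpotent component of $\Psi(\alpha, \beta, \nu)$ lies in $\orb_{\alpha, \beta}$ by construction, and Theorem \ref{thm:paramc} gives a bijection between colored partitions of signature $\xi(V)$ and orbits in $\knil$—but it is the one place where a small additional observation beyond quoting the preceding corollary is needed.
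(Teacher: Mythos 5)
Your proof is correct and follows the route the paper intends: the corollary is stated without proof as an immediate consequence of Theorem \ref{thm:param} and the preceding corollary, and your argument (decomposing the $K$-stable set $V_m \times \orb_{\lambda,\epsilon}$ into orbits, matching shapes via Theorem \ref{thm:paramc}, checking $\chi(v_{i,\mu_i}) = m$ for the converse inclusion, and observing $v = 0$ for the marking $\rho_m(\lambda,\epsilon,0)$) is exactly the intended derivation. No gaps.
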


\begin{figure}[h]
  \caption{All the orbits in $K \backslash (V_0 \times \nil)$, for $\n
    = 2$ and signature $(2, 2)$, parametrized by signed \qb and ranked
    by dimension.  The bottommost orbit is zero.  The next orbit up
    has dimension 2.  The topmost orbits each have dimension 8.  An
    edge indicates that the lower orbit lies in the Zariski closure of
    the upper orbit.}
  \begin{center}
    \includegraphics[scale=.55555555]{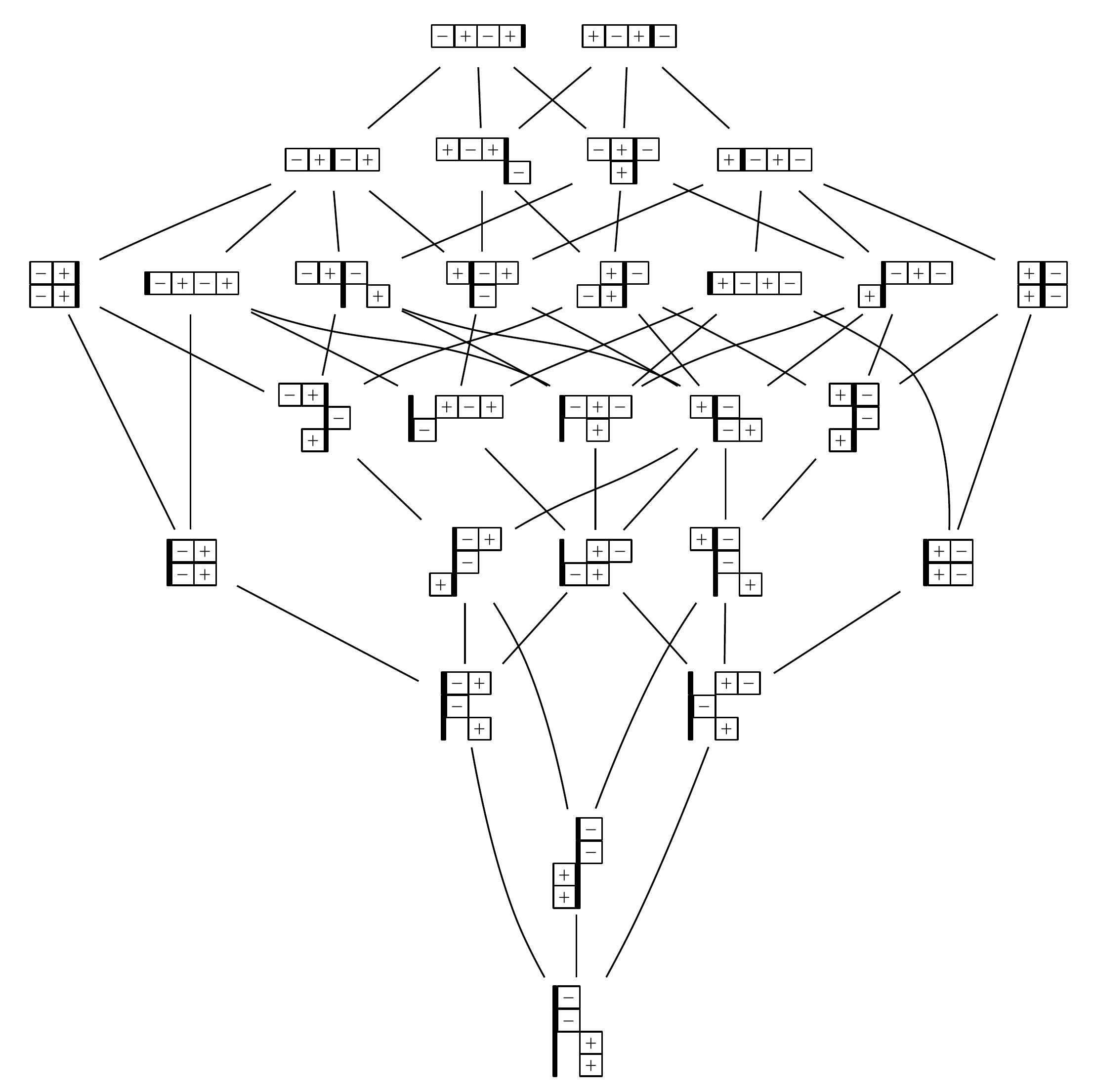}
  \end{center}
\end{figure}

\subsection{Connections to classical orbits}

\begin{proposition} \label{prop:rhobar} Let $m \in \zmod$ and let
  $(\lambda, \epsilon, \mu)$ be a marked colored partition satisfying
  $\epsilon + [\lambda - \mu] = m$.  Let $(\lambda, \epsilon, \bar
  \mu)$ be a \cqb that shares a fiber of $\Psi$ with $(\lambda,
  \epsilon, \mu)$.  Then $\bar \rho (\lambda, \epsilon, \mu) = \bar
  \rho(\lambda, \epsilon, \bar \mu)$, so $(\lambda, \epsilon, \bar
  \mu) = \rho_m(\bar \rho (\lambda, \epsilon, \mu))$.
\end{proposition}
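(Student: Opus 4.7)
The plan is to show that each step of the algorithm from the proof of Theorem \ref{thm:param}, which transforms $(\lambda, \epsilon, \mu)$ into a \cqb in the same fiber of $\Psi$, leaves $\bar\rho(\lambda, \epsilon, \mu)$ unchanged. The hypothesis $\epsilon + [\lambda - \mu] = m$ ensures that $\epsilon_i + [\nu_i] = m$ for every $i$, so the algorithm applies and produces some \cqb $(\lambda, \epsilon, \mu^{\star})$ in the same fiber as $(\lambda, \epsilon, \mu)$. By Theorem \ref{thm:param}, $(\lambda, \epsilon, \mu^{\star})$ represents the same class of $\widetilde\catQ_{\xi(V)}$ as $(\lambda, \epsilon, \bar\mu)$.

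The core of the argument is to verify that each move of the algorithm preserves $\tilde\mu_k$ for every $k$. Both move types are indexed by a pair $i < j$ violating the \gqb condition. For the first type (replacing $\mu_i$ by $\mu_i + \n$ when $\mu_i + \n \leq \mu_j$), I would run a case analysis on $k$ with splits $k \leq i$, $i < k \leq j$, and $k > j$. In each case the new contribution of $\mu_i$ to the maximum defining $\tilde\mu_k$ is bounded by an already-present term involving $\mu_j$; the only inputs used are the move inequality $\mu_i + \n \leq \mu_j$ and the monotonicity $\lambda_i \geq \lambda_j$. The second type (replacing $\mu_j$ by $\mu_j + \n$ when $\nu_i + \n \leq \nu_j$, equivalently $\mu_j + \n \leq \mu_i + \lambda_j - \lambda_i$) is handled symmetrically, with the dominating term now involving $\mu_i$.

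After iterating, $\bar\rho(\lambda, \epsilon, \mu) = \bar\rho(\lambda, \epsilon, \mu^{\star})$. To promote this to $\bar\rho(\lambda, \epsilon, \bar\mu)$, I would note that $(\lambda, \epsilon, \mu^{\star})$ and $(\lambda, \epsilon, \bar\mu)$ differ only by a row permutation and by changes in entries $\mu_i \leq 0$. Row permutations commute with $\bar\rho$ up to the corresponding permutation, and entries $\mu_i \leq 0$ contribute to the maximum defining $\tilde\mu_k$ only through quantities $\leq 0$ that are dominated by the $\{0\}$ term; both operations therefore preserve $\bar\rho$ at the level of $\widetilde\catQ_{\xi(V)}$-classes.

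The second equation is then immediate: by Lemma \ref{lem:rho} applied to the \cqb $(\lambda, \epsilon, \bar\mu)$ (whose color equals $m$, since $\chi(v) = m$ when $v \neq 0$, and since the $v = 0$ case collapses to $\rho_m(\lambda, \epsilon, 0)$ on both sides of the asserted equality), we have $\rho_m(\bar\rho(\lambda, \epsilon, \bar\mu)) = (\lambda, \epsilon, \bar\mu)$, so applying $\rho_m$ to the first equation yields the second. The main obstacle will be the case analysis for the two move types in the second step; it is routine but must be organized carefully, since the term that dominates the newly increased entry of $\mu$ depends on which interval $k$ lies in.
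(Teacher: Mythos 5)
Your proposal is correct and follows essentially the same route as the paper: the paper's proof also tracks one step of the Theorem~\ref{thm:param} algorithm, shows $\tilde\delta = \tilde\mu$ by the same three-way case split on $k$ relative to the violating pair, iterates, and finishes by noting that $\rho_m \circ \bar\rho$ fixes \cqb{}s. Your extra remarks about reconciling the algorithm's output with $\bar\mu$ up to row permutation and non-positive entries are a sensible tightening of a step the paper leaves implicit, but not a different argument.
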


\begin{proof}
  Let $\delta$ be a marking of $\lambda$ obtained from $\mu$ by one
  step of the iterative portion of the proof of theorem
  \ref{thm:param}.  Let $(\lambda, \epsilon, \tilde \mu) = \bar \rho
  (\lambda, \epsilon, \mu)$ and $(\lambda, \epsilon, \tilde \delta) =
  \bar \rho (\lambda, \epsilon, \delta)$.  We will show that $\tilde
  \mu = \tilde \delta$.  Therefore, for a fixed orbit the marking
  $\tilde \mu$ is the same, regardless of the representative marking
  used to construct $\tilde \mu$.
  
  If $(\lambda, \epsilon, \mu)$ is not a \cqb then there exist $s < r$
  with either $\mu_s + \n \leq \mu_r$ or $\nu_s + \n \leq \nu_r$.  We
  need to show that if $\delta$ is constructed in either of these
  cases then $\tilde \delta = \tilde \mu$.  The second case is
  entirely analogous to the first, so we will only prove the first
  case.  Assume that $s < r$ and $\mu_s + \n \leq r$.  Then \[\delta_k
  =
  \begin{cases} \mu_s + \n & k = s \\ \mu_k & k \neq s.
  \end{cases}\]
    
  The formulas for $\tilde \mu$ and $\tilde \delta$ make it clear that
  $\tilde \mu \leq \tilde \delta$.  On the other hand, the same
  formulas show that if $\tilde \delta_k > \tilde \mu_k$ then either
  $k \leq s$ and $\tilde \delta_k = \delta_s$ or $k > s$ and $\tilde
  \delta_k = \delta_s + \lambda_k - \lambda_s$.  We divide our effort
  into three cases:
  \begin{enumerate}
  \item If $k \leq s$ then $k < r$ and $\tilde \delta_k = \delta_s =
    \mu_s + \n \leq \mu_r \leq \tilde \mu_k$.
  \item If $s < k < r$ then $\tilde \delta_k = \delta_s + \lambda_k -
    \lambda_s \leq \delta_s = \mu_s + \n \leq \mu_r \leq \tilde
    \mu_k$.
  \item If $k > r$ then $\tilde \delta_k = \delta_s + \lambda_k -
    \lambda_s = \mu_s + \n + \lambda_k - \lambda_s \leq \mu_r +
    \lambda_k - \lambda_s \leq \mu_r + \lambda_k - \lambda_r \leq
    \tilde \mu_k$.
  \end{enumerate}
  In each case we have a contradiction, so $\tilde \delta_k \leq
  \tilde \mu_k$ for each $k$ and we have $\tilde \delta = \tilde \mu$.
  Inductively, we just need to apply an adequate number of iterations
  until we arrive at the \cqb.  The last claim follows because $\rho_m
  \circ \bar \rho$ fixes \cqb{}s.
\end{proof}
  
\begin{corollary}
  If $\epsilon + [\lambda - \mu] = \epsilon + [\lambda - \delta] = m$
  then $\orb_{\lambda, \epsilon, \mu} = \orb_{\lambda, \epsilon,
    \delta}$ if and only if $\bar \rho(\lambda, \epsilon, \mu) = \bar
  \rho(\lambda, \epsilon, \delta)$.  So, if $m$ is fixed then $K
  \backslash (V_m \times \nil)$ is parametrized by minimal
  bipartitions.
\end{corollary}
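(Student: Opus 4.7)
The plan is to deduce this entirely from Proposition \ref{prop:rhobar}, together with Lemma \ref{lem:rho} and Theorem \ref{thm:param}. The role of the hypothesis $\epsilon + [\lambda - \mu] = \epsilon + [\lambda - \delta] = m$ is to pin down, via $\rho_m$, a single normalized \cqb in each fiber of $\Psi$.

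For the forward direction, I would assume $\orb_{\lambda, \epsilon, \mu} = \orb_{\lambda, \epsilon, \delta}$, so that $(\lambda, \epsilon, \mu)$ and $(\lambda, \epsilon, \delta)$ lie in a common fiber of $\Psi$. Theorem \ref{thm:param} guarantees that this fiber contains a \cqb, and Proposition \ref{prop:rhobar} identifies it explicitly: the only \cqb in the fiber compatible with the color condition $\epsilon + [\lambda - \cdot] = m$ is $\rho_m(\bar\rho(\lambda, \epsilon, \mu))$, and likewise $\rho_m(\bar\rho(\lambda, \epsilon, \delta))$ is a \cqb in the fiber. By the uniqueness statement in Theorem \ref{thm:param}(2)(b), together with the fact (established earlier in the excerpt) that a fixed value of $m$ picks out at most one row-equivalence class of \cqb's in a given element of $\widetilde{\catQ}_{\xi(V)}$, these two \cqb's must be equal. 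Applying $\bar\rho$ to both and using Lemma \ref{lem:rho}, which asserts that $\rho_m$ and $\bar\rho$ are mutually inverse bijections between \cqb{}s with $\epsilon + [\lambda - \mu] = m$ and their associated minimal bipartitions, I obtain $\bar\rho(\lambda, \epsilon, \mu) = \bar\rho(\lambda, \epsilon, \delta)$.

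For the reverse direction, assume $\bar\rho(\lambda, \epsilon, \mu) = \bar\rho(\lambda, \epsilon, \delta)$. Proposition \ref{prop:rhobar} already says that $(\lambda, \epsilon, \mu)$ and $\rho_m(\bar\rho(\lambda, \epsilon, \mu))$ lie in the same fiber of $\Psi$, and similarly for $\delta$. Since the two right-hand sides coincide by hypothesis, $(\lambda, \epsilon, \mu)$ and $(\lambda, \epsilon, \delta)$ land in the same fiber, which is exactly the statement $\orb_{\lambda, \epsilon, \mu} = \orb_{\lambda, \epsilon, \delta}$.

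For the final assertion, the preceding corollary already shows that $K \backslash (V_m \times \nil)$ is parametrized by \cqb{}s $(\lambda, \epsilon, \mu)$ of signature $\xi(V)$ with $\epsilon + [\lambda - \mu] = m$ via $\Psi$. Composing with the bijection $\bar\rho$ from Lemma \ref{lem:rho}, which identifies this set of \cqb's with the set of minimal bipartitions of signature $\xi(V)$, gives the desired parametrization. I expect no serious obstacle: the only subtlety is tracking the correct uniqueness statement from Theorem \ref{thm:param}(2), particularly in case $v = 0$, where one must note that fixing $m$ collapses the $\n$ possible \cqb{}s $\rho_{m'}(\lambda, \epsilon, 0)$ down to a single one, keeping the argument uniform.
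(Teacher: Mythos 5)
Your proposal is correct and follows essentially the route the paper intends: the corollary is an immediate consequence of Proposition \ref{prop:rhobar} (which already pins down the unique \cqb of color $m$ in a fiber as $\rho_m(\bar\rho(\cdot))$), combined with Lemma \ref{lem:rho}'s inverse-bijection statement and the preceding corollary's parametrization of $K\backslash(V_m\times\nil)$ by \cqb{}s of color $m$. Your attention to the $v=0$ case, where fixing $m$ selects one of the $\n$ possible \cqb{}s, matches the bookkeeping in Theorem \ref{thm:param}(2)(c).
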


\begin{lemma}
  Let $k$ be a divisor of $\n$ and let $\zeta_k: \zmod \to \zmod[k]$
  be the natural projection.  For $m \in \zmod[k]$ define $\ds W_m =
  \bigoplus_{\zeta_k(i) = m}^{\n / k} V_i $.  Then
  \begin{enumerate}
  \item $(V, W_{[0]}, \dots, W_{[k - 1]})$ is a $k$-colored vector
    space.
  \item If $x \in \nil$ then $x W_{m} \subset W_{m + [1]}$ for each $m
    \in \zmod[k]$.
  \end{enumerate}
\end{lemma}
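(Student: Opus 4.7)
The plan is to verify both claims by directly unwinding definitions; neither requires any real machinery beyond the fact that $\zeta_k$ is a group homomorphism.

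For (1), I would first note that the fibers $\zeta_k^{-1}(m) \subset \zmod$, as $m$ ranges over $\zmod[k]$, partition $\zmod$ into $k$ cosets, each of size $\n/k$. Since $V = \bigoplus_{i \in \zmod} V_i$, one may group the direct summands according to the fiber of $\zeta_k$ to which each index belongs. Summing within a fiber produces $W_m$, and summing over all fibers recovers $V$, so $V = W_{[0]} \oplus \cdots \oplus W_{[k-1]}$. This decomposition is exactly what is required of a $k$-colored vector space structure.

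For (2), I would appeal to the standing convention from the end of Section~2: $x \in \nil$ means $x$ is $\sigma$-colored for the cyclic permutation $\sigma(i) = i + [1]$ of $\zmod$, so $x V_i \subset V_{i + [1]}$ for every $i \in \zmod$. Because $\zeta_k$ is a group homomorphism, it sends $[1] \in \zmod$ to $[1] \in \zmod[k]$, whence $\zeta_k(i + [1]) = \zeta_k(i) + [1]$. Thus whenever $\zeta_k(i) = m$ we have $\zeta_k(i + [1]) = m + [1]$, so $x V_i \subset V_{i+[1]} \subset W_{m+[1]}$. Taking the sum over all $i$ with $\zeta_k(i) = m$ yields $x W_m \subset W_{m + [1]}$.

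There is no real obstacle in this proof; it is purely bookkeeping. The only point where care is needed is that the symbol $[1]$ denotes different objects on the two sides of the inclusion $x W_m \subset W_{m+[1]}$, namely $[1] \in \zmod$ on one side and $[1] \in \zmod[k]$ on the other, and the compatibility of these two elements under the projection $\zeta_k$ is exactly what makes the statement true.
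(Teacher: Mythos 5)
Your proof is correct, and it is the intended argument: the paper states this lemma without proof precisely because it reduces to the bookkeeping you carry out (the fibers of $\zeta_k$ partition $\zmod$ into cosets, and $\zeta_k(i+[1]) = \zeta_k(i)+[1]$ transports the condition $xV_i \subset V_{i+[1]}$ to the coarser decomposition). Your closing remark about the two meanings of $[1]$ is exactly the one point worth being careful about, and you handle it correctly.
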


That is, $x$ is colored relative to the subspaces $W_i$.  Moreover,
$K$ naturally embeds in $GL(W_0) \times \dots \times GL(W_{n-1})$.  In
other words, if we reduce the number of colors to $k$ (combining all
colors that are congruent modulo $k$) then we get a new colored
nilpotent cone and we can view $x$ inside this larger cone.  Since the
identity map $V \to V$ is $K$-equivariant, we have an induced map
$\Phi_k$ of orbits.  On the other hand, we have an obvious map of
marked colored partitions that reduces the number of colors to $k$:
$\phi_k(\lambda, \epsilon, \mu) = (\lambda, \zeta_k \circ \epsilon,
\mu)$.  Unsurprisingly, these two maps are compatible.  The case $k =
1$ is especially illuminating.
  
\begin{proposition}
  If $(\lambda, \epsilon, \mu)$ is any marked partition then $\Psi
  \circ \phi_k = \Phi_k \circ \Psi $.  That is, $\Phi_k(\orb_{\lambda,
    \epsilon, \mu}) = \orb_{\phi_k(\lambda, \epsilon, \mu)}$.
\end{proposition}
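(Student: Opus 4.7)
The plan is to exhibit a single pair $(v, x) \in \vnil$ that witnesses both sides of the equation simultaneously, so that the proposition reduces to unpacking how $\Psi$ and $\Phi_k$ act on the same concrete data.

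First I would fix $(\lambda, \epsilon, \mu)$, choose $x \in \orb_{\lambda, \epsilon}$, and pick a colored Jordan basis $\catB = \{v_{i,j}\}$ for $x$ of type $(\lambda, \epsilon)$ relative to the original $\n$-coloring $V = V_0 \oplus \cdots \oplus V_{\n - 1}$, setting $v = \sum_{i=1}^{l(\lambda)} v_{i, \mu_i}$. By Definition \ref{def:psi}, $\Psi(\lambda, \epsilon, \mu) = \orb_{v, x}$, and since $\Phi_k$ is induced by the inclusion $K \hookrightarrow K' := GL(W_{[0]}) \times \cdots \times GL(W_{[k-1]})$ together with the identity on $\vnil$, the left-hand side $\Phi_k(\Psi(\lambda, \epsilon, \mu))$ is simply the $K'$-orbit of the very same pair $(v, x)$.

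Next I would verify the key observation that this same basis $\catB$ is also a colored Jordan basis for $x$ relative to the coarser $k$-coloring $V = W_{[0]} \oplus \cdots \oplus W_{[k-1]}$, of type $(\lambda, \zeta_k \circ \epsilon)$. Because $V_i \subset W_{\zeta_k(i)}$, any vector colored in the finer sense is colored in the coarser sense; concretely, $v_{i,j} \in V_{\epsilon_i + [\lambda_i - j]}$ lies in $W_{\zeta_k(\epsilon_i) + [\lambda_i - j]}$, matching the color required by type $(\lambda, \zeta_k \circ \epsilon)$. The preceding lemma ensures that $x$ is colored in the new sense as well, and the Jordan action $x v_{i,j} = v_{i, j-1}$ on basis elements is unaffected by the change of coloring. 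Applying Definition \ref{def:psi} in the $k$-coloring to the pair $(\catB, \mu)$ therefore yields $\Psi(\phi_k(\lambda, \epsilon, \mu)) = \Psi(\lambda, \zeta_k \circ \epsilon, \mu)$, which is the $K'$-orbit of $(v, x)$—exactly the right-hand side.

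There is no substantive obstacle here beyond careful bookkeeping: the content of the proposition is essentially the statement that a colored Jordan basis is preserved under color refinement, after which both sides collapse to the $K'$-orbit of one specific pair. The only ancillary items worth confirming are well-definedness of the ingredients: $\phi_k$ respects the row-reordering equivalence on marked colored partitions (immediate, since $\zeta_k \circ (\epsilon \circ \sigma) = (\zeta_k \circ \epsilon) \circ \sigma$), and $\Phi_k$ is well-defined on orbit spaces (immediate from $K \subset K'$). Neither introduces any genuine difficulty.
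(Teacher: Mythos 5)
Your proof is correct, and it is precisely the routine verification the paper omits (the proposition is stated there without proof, following the remark that the two maps are ``unsurprisingly'' compatible). Evaluating both sides on a single pair $(v,x)$ built from one colored Jordan basis, and observing that this basis remains a Jordan basis of type $(\lambda, \zeta_k\circ\epsilon)$ for the coarser coloring, is exactly the intended argument.
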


\begin{corollary}
  If $(\lambda, \epsilon, \mu)$ is a \cqb and the minimal bipartition
  of $(\lambda, \mu)$ is $(\lambda, \tilde \mu)$ then $\phi_1
  (\lambda, \epsilon, \mu) = (\lambda, \tilde \mu)$, the bipartition
  given by Achar-Henderson.
\end{corollary}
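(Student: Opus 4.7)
The plan is to unwind the definitions so that the claim reduces to Proposition \ref{prop:rhobar} applied in the trivially colored (i.e.\ $k=1$) setting. By the definition just above the corollary, $\phi_1(\lambda, \epsilon, \mu) = (\lambda, \zeta_1 \circ \epsilon, \mu)$; since $\zmod[1]$ is the trivial group, this is just the marked partition $(\lambda, \mu)$ with trivial coloring. The corollary is thus really an assertion about which element of the equivalence class of $(\lambda,\mu)$ in $\widetilde{\catP}_{\xi(V)}$ (with $n=1$) is the unique bipartition in that class.

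First I would recall that, as observed in the paragraph preceding Lemma \ref{lem:rho2}, a $\mathbf{1}$-bipartition is precisely a bipartition in the usual sense, so the \cqb{}s for $n=1$ are exactly the bipartitions, and these are what Achar--Henderson use to parametrize the ordinary enhanced nilpotent cone. By Proposition \ref{prop:rhobar}, the unique \cqb sharing a fiber of $\Psi$ with $(\lambda,\mu)$ (viewed $1$-coloredly) is $\rho_m(\bar\rho(\lambda,\mu))$, where $m$ is the unique element of $\zmod[1]$.

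Next I would observe that, when $n=1$, the map $\rho_m$ is the identity on markings: in its definition, $\delta_i = \max\{k \in \setZ \mid k \leq \mu_i,\ \epsilon_i + [\lambda_i - k] = m\}$, and the coloring condition is vacuous because $\zmod[1]$ is trivial, so $\delta_i = \mu_i$. Therefore the unique bipartition in the fiber is $\bar \rho(\lambda,\mu) = (\lambda,\tilde\mu)$ by the very definition of the minimal bipartition associated to $(\lambda,\mu)$ (Definition following Lemma \ref{lem:rho2}). Combined with the previous proposition $\Psi \circ \phi_1 = \Phi_1 \circ \Psi$, this identifies $\Phi_1(\orb_{\lambda,\epsilon,\mu})$ with the Achar--Henderson orbit labeled by $(\lambda,\tilde \mu)$, proving the corollary.

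The only real subtlety — and where I expect the main obstacle to lie — is notational rather than mathematical: the statement $\phi_1(\lambda,\epsilon,\mu) = (\lambda,\tilde\mu)$ is an equality of equivalence classes in $\widetilde{\catQ}_{\xi(V)}$ (or equivalently of the corresponding orbits), not of raw marked partitions, so one has to be careful to distinguish between the literal output of $\phi_1$ and its canonical bipartition representative. Once this identification is made, the proof is essentially a bookkeeping exercise on top of Proposition \ref{prop:rhobar}.
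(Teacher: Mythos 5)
Your argument is correct and is essentially the one the paper leaves implicit: the proposition immediately preceding the corollary reduces the claim to identifying the unique bipartition in the fiber of the $1$-colored $\Psi$ over $\Psi(\lambda,\mu)$, and Proposition \ref{prop:rhobar} (with $\rho_m$ acting as the identity when $\n = 1$, since the congruence condition is vacuous) identifies it as $\bar\rho(\lambda,\mu) = (\lambda,\tilde\mu)$. You are also right that the only delicate point is notational: the stated equality must be read as an equality of orbit labels (equivalence classes) rather than of raw marked partitions, since literally $\phi_1(\lambda,\epsilon,\mu)=(\lambda,\mu)$.
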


On the other hand, we have a natural $K$-equivariant projection
$\theta: \vnil \to \nil$.  It should be clear that
$\theta(\orb_{\lambda, \epsilon, \mu}) = \orb_{\lambda, \epsilon}$.
In other words, our parametrization is well-behaved relative to each
setting that we are trying to generalize.  It projects in the most
natural way possible to the colored nilpotent cone and to the setting
explored in \cite{AH}.

\subsection{An alternative parametrization of enhanced orbits}

Fix an orbit $\orb$ in the image of $\Psi$ and let $(\lambda,
\epsilon)$ be the corresponding colored partition.  Then \[S_\orb = \{
\mu \geq 0 \text{ a marking of } \lambda \mid \Psi(\lambda, \epsilon,
\mu) = \orb\}\] is partially ordered by the rule $\delta \leq \mu$ if
$\delta_i \leq \mu_i$ for each $i$.  Since $S_\orb$ is finite and
nonempty, $S_\orb$ has at least one minimal element.  A primary
objective of this subsection is to show that the minimal element is
unique up to row equivalence.  Throughout this subsection, if $\mu \in
S_\orb$ then let $\tilde \mu \in S_\orb$ be defined by the usual
formula $\tilde \mu_i = \max \left(\{\mu_j \mid j \geq i\} \cup \{
  \lambda_i - (\lambda_j - \mu_j) \mid j \leq i\} \right)$.

\begin{lemma} \label{lem:char1} If $\mu \in S_\orb$ is minimal and $i
  < j$ satisfy $\mu_i > 0$, $\mu_j > 0$, and $\epsilon_j + [\lambda_j
  - \mu_j] = \epsilon_i + [\lambda_i - \mu_i]$ then $\mu_i > \mu_j$
  and $\lambda_i - \mu_i > \lambda_j - \mu_j$.  In particular,
  $\lambda_i \geq \lambda_j + 2$.
\end{lemma}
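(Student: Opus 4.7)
The plan is to prove both strict inequalities by contradiction, in each case using the assumption to construct a colored Jordan basis exhibiting a marking $\mu' \in S_\orb$ with $\mu' < \mu$. Fix once and for all a colored Jordan basis $\catB = \{v_{k, l}\}$ of type $(\lambda, \epsilon)$ for some $x \in \orb_{\lambda, \epsilon}$ with $v = \sum_k v_{k, \mu_k}$, so that $\Psi(\lambda, \epsilon, \mu) = \orb_{v, x} = \orb$. The required basis manipulations are variants of those used in the proof of theorem \ref{thm:param}.

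For the first inequality, suppose for contradiction that $\mu_i \leq \mu_j$. Define a new basis $\catB' = \{w_{k, l}\}$ by $w_{k, l} = v_{k, l}$ for $k \neq j$ and $w_{j, l} = v_{j, l} + v_{i, l + \mu_i - \mu_j}$, using the convention $v_{i, r} = 0$ outside $1 \leq r \leq \lambda_i$. The color hypothesis $\epsilon_i + [\lambda_i - \mu_i] = \epsilon_j + [\lambda_j - \mu_j]$ is precisely what forces $v_{j, l}$ and $v_{i, l + \mu_i - \mu_j}$ to share a color, so each $w_{j, l}$ is colored; the assumption $\mu_i \leq \mu_j$ forces $x w_{j, 1} = v_{i, \mu_i - \mu_j} = 0$, so $\catB'$ is a colored Jordan basis for $x$ of the same type $(\lambda, \epsilon)$. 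Inverting gives $v_{j, \mu_j} = w_{j, \mu_j} - v_{i, \mu_i}$, so the $v_{i, \mu_i}$ term cancels in the expression for $v$, yielding $v = w_{j, \mu_j} + \sum_{k \neq i, j} v_{k, \mu_k}$. This exhibits a marking $\mu'$ with $\mu'_i = 0$ and $\mu'_k = \mu_k$ otherwise. Since $\mu_i > 0$, this $\mu' \in S_\orb$ is strictly less than $\mu$, contradicting minimality.

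For the second inequality I would apply the mirror construction, leaving row $i$ as $u_{i, l} = v_{i, l} + v_{j, l + \mu_j - \mu_i}$ and the other rows alone. The already-established fact $\mu_i > \mu_j$ makes $x u_{i, 1} = v_{j, 1 + \mu_j - \mu_i} = 0$, and the assumption $\nu_i \leq \nu_j$ ensures $l + \mu_j - \mu_i \leq \lambda_j$ for all $l \leq \lambda_i$, so $u_{i, l}$ is well-defined; color compatibility again reduces to the same hypothesis. This time $u_{i, \mu_i} = v_{i, \mu_i} + v_{j, \mu_j}$, so $v$ acquires a marking $\mu''$ with $\mu''_j = 0 < \mu_j$, again violating minimality. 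The closing claim is then immediate: the two strict integer inequalities $\mu_i \geq \mu_j + 1$ and $\nu_i \geq \nu_j + 1$ add to $\lambda_i \geq \lambda_j + 2$.

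I expect the only real subtlety to be the bookkeeping around the boundary cases of the two basis modifications, namely verifying that each modified collection is simultaneously colored and Jordan. The virtue of the setup is that each of the three hypotheses (color congruence, positivity, and the sign of the relevant inequality) gets used in exactly one of these verifications, so there is nothing to optimize.
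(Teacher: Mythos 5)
Your proof is correct and takes essentially the same approach as the paper: both argue by contradiction, using a color-preserving unipotent change of colored Jordan basis (enabled by the congruence hypothesis $\epsilon_i + [\lambda_i - \mu_i] = \epsilon_j + [\lambda_j - \mu_j]$) to exhibit a marking in $S_\orb$ strictly below $\mu$. The only difference is cosmetic: the paper invokes the shift-by-$\n$ step from the proof of theorem \ref{thm:param} to lower the offending mark by $\n$, whereas you write the basis change out explicitly and cancel the mark entirely in one step.
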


\begin{proof}
  Define
  \begin{align*}
    \delta_k &= \begin{cases} \max\{\mu_i - \n, 0\} & k = i \\ \mu_k &
      k \neq i,
    \end{cases} \\
    \gamma_k &= \begin{cases} \max\{\mu_j - \n, 0\} & k = j \\ \mu_k &
      k \neq j. \end{cases}
  \end{align*}
  
  If $\mu_i \leq \mu_j$ then $\delta < \mu$ and the algorithm in
  theorem \ref{thm:param} shows that $\delta \in S$.  On the other
  hand, if $\lambda_i - \mu_i \leq \lambda_j - \mu_j$ then $\gamma <
  \mu$ and $\gamma \in S$.  In either case, minimality of $\mu$ is
  violated.  Now, if $\mu_i > \mu_j$ and $\lambda_i - \mu_i >
  \lambda_j - \mu_j$ then $\mu_i \geq \mu_j + 1$ and $\lambda_i -
  \mu_i \geq \lambda_j - \mu_j + 1$.  We just add these two
  inequalities to prove the last claim.
\end{proof}

\begin{lemma} \label{lem:char2} If $\mu \in S_\orb$ is minimal and
  $\mu_i > 0$ then $\tilde \mu_i = \mu_i$.
\end{lemma}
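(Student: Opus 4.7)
The plan is to prove $\tilde\mu_i \leq \mu_i$, since the reverse inequality is immediate from the defining formula (take $j = i$ in either of the two sets). As $\tilde\mu_i$ is a maximum, it suffices to show $\mu_j \leq \mu_i$ for every $j \geq i$ and $\lambda_i - \lambda_j + \mu_j \leq \mu_i$ for every $j \leq i$.

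The central observation, which I would establish first, is a coloring constraint: because $\orb$ lies in the image of $\Psi$ (hence in $\kvnil$), the vector $v = \sum_k v_{k,\mu_k}$ associated to $\mu$ is colored, so every nonzero summand $v_{k,\mu_k}$ lies in the same $V_m$ with $m = \chi(v)$. Equivalently, $\epsilon_k + [\nu_k] = m$ whenever $\mu_k > 0$. Consequently, row $i$ (for which $\mu_i > 0$) and any other row $k$ with $\mu_k > 0$ satisfy the color-matching hypothesis of lemma \ref{lem:char1}. This is the only way lemma \ref{lem:char1} will enter the argument.

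With that observation the remaining casework is short. For an index $j > i$: if $\mu_j = 0$ the bound $\mu_j \leq \mu_i$ is immediate; if $\mu_j > 0$ then lemma \ref{lem:char1} applied to the pair $i < j$ yields $\mu_i > \mu_j$. For an index $j < i$: if $\mu_j = 0$, then $\lambda_i - \lambda_j + \mu_j = \lambda_i - \lambda_j \leq 0 < \mu_i$ because $\lambda$ is a partition; if $\mu_j > 0$, then lemma \ref{lem:char1} applied to the pair $j < i$ gives $\lambda_j - \mu_j > \lambda_i - \mu_i$, which rearranges to $\lambda_i - \lambda_j + \mu_j < \mu_i$. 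I do not anticipate any serious obstacle: the only genuinely non-mechanical step is noticing that two positive marks automatically satisfy the color-matching hypothesis of lemma \ref{lem:char1}, after which that lemma together with the partition property of $\lambda$ handles all four cases.
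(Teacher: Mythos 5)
Your reduction and casework coincide with the paper's own proof: show $\tilde\mu_i \le \mu_i$, and for each competing index $j$ split on whether $\mu_j = 0$ (where $\lambda_j \ge \lambda_i$ does the work) or $\mu_j > 0$ (where lemma \ref{lem:char1} does). You are also right that the one non-mechanical point is verifying the color-matching hypothesis of lemma \ref{lem:char1} --- a hypothesis the paper's proof passes over in silence. The problem is your justification of it. The implication ``$\orb$ lies in the image of $\Psi$, hence in $\kvnil$'' is backwards: the paper establishes $\kvnil \subseteq \im \Psi$, not the reverse, and the containment is strict. If the positively marked rows of $(\lambda,\epsilon,\mu)$ carry different values of $\epsilon_k + [\nu_k]$, then $v = \sum_k v_{k,\mu_k}$ has nonzero components in more than one $V_m$, so $v \notin \widetilde V$ and $\orb_{v,x} \notin \kvnil$, yet $\orb_{v,x} \in \im\Psi$ by construction. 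In that situation your coloring constraint is simply false, lemma \ref{lem:char1} says nothing about the offending pair, and the conclusion of the lemma itself can fail: take $\n = 2$, $\lambda = (2,2)$, $\epsilon = (0,0)$, $\mu = (1,2)$. Every marking $\delta \le \mu$ with $\delta \ne \mu$ yields a colored vector, hence an orbit inside $\vnil$, whereas $v = v_{1,1} + v_{2,2}$ is not colored; so $\mu$ is minimal in $S_\orb$, while $\tilde\mu_1 = \max\{1,2\} = 2 > 1 = \mu_1$.

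The gap is therefore one of scope rather than of method. Once you impose $\orb \in \kvnil$ --- the only case in which the paper actually invokes this lemma; the general case of the theorem that follows is reduced to it by splitting $v$ into its colored components $v_m$ and the markings into the $\mu^m$ --- your central observation is correct and easy: $v \in V_{\chi(v)}$, the nonzero $v_{k,\mu_k}$ are linearly independent members of a colored basis summing to a colored vector, so each lies in $V_{\chi(v)}$ and $\epsilon_k + [\nu_k] = \chi(v)$ for every $k$ with $\mu_k > 0$. With that in hand the rest of your argument goes through verbatim, and is in fact more careful than the printed proof. Fix the justification by restricting to $\orb \in \kvnil$ (or by adding the coloring constraint as an explicit hypothesis) rather than by appealing to membership in $\im\Psi$.
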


\begin{proof}
  By lemma \ref{lem:char1}, if $j > i$ then $\mu_j < \mu_i$, so
  $\tilde \mu_i = \max \{ \lambda_i - (\lambda_j - \mu_j) \mid j \leq
  i\} = \lambda_i - \min \{ \lambda_j - \mu_j \mid j \leq i\}$.
  Again, the lemma shows that if $j < i$ then either $\mu_j = 0$, so
  $\lambda_j - \mu_j = \lambda_j \geq \lambda_i \geq \lambda_i -
  \mu_i$, or $\lambda_j - \mu_j > \lambda_i - \mu_i$.  Therefore,
  $\min \{ \lambda_j - \mu_j \mid j \leq i\} = \lambda_i - \mu_i$ and
  the claim is proved.
\end{proof}

\begin{theorem}
  Let $\orb$ be in the image of $\Psi$ and let $(\lambda, \epsilon)$
  be a corresponding colored partition.  Then
  \begin{enumerate}
  \item There is a \emph{minimal} marking $\mu$ of $\lambda$
    satisfying
    \begin{enumerate}
    \item $\Psi(\lambda, \epsilon, \mu) = \orb$;
    \item $\mu \geq 0$;
    \item If $\delta \leq \mu$ is any marking of $\lambda$ satisfying
      \emph {(a)} and \emph{(b)} then $\delta = \mu$.
    \end{enumerate}
  \item If $\mu$ satisfies \emph {(a)} and \emph{(b)} then there
    exists $\delta \leq \mu$ that is minimal in the sense of
    \emph{(c)}.
  \item If $\mu$ and $\delta$ are each minimal then $(\lambda,
    \epsilon, \mu)$ and $(\lambda, \epsilon, \delta)$ are
    row-equivalent.
  \item If $\mu$ satisfies \emph {(a)} and \emph{(b)} then $\mu$ is
    minimal if and only if $\mu_i > \mu_j$ and $\lambda_i - \mu_i >
    \lambda_j - \mu_j$ for every pair $i < j$ satisfying $\mu_i > 0$,
    $\mu_j > 0$, and $\epsilon_j + [\lambda_j - \mu_j] = \epsilon_i +
    [\lambda_i - \mu_i]$.
  \end{enumerate}
\end{theorem}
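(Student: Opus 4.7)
The plan is to tackle the four parts in the order (1), (2), (4), (3), exploiting the two preparatory lemmas just established. Parts (1) and (2) are elementary: since any $\mu$ in $S_\orb$ satisfies $0 \leq \mu_i \leq \lambda_i$ with $\mu_i = 0$ for $i > l(\lambda)$, $S_\orb$ is finite, and it is nonempty because $\orb$ lies in the image of $\Psi$ and any preimage can be replaced by a row-equivalent one with $\mu \geq 0$. So $S_\orb$ has a minimal element, proving (1). For (2), the subposet $\{\delta \in S_\orb : \delta \leq \mu\}$ is finite and nonempty, and any minimal element of this subposet is automatically minimal in $S_\orb$.

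The ``only if'' direction of (4) is exactly lemma \ref{lem:char1}. For the ``if'' direction, I would argue by contradiction: suppose $\mu$ satisfies the stated ordering condition but some $\delta \in S_\orb$ has $\delta \leq \mu$, $\delta \neq \mu$, and pick an index $i$ with $\delta_i < \mu_i$; since $\delta \geq 0$, necessarily $\mu_i > 0$. After adjusting each marking's zero entries to the unique values in $(-\n, 0]$ which make $\epsilon_k + [\lambda_k - \cdot]$ equal to $[\chi(v)]$ on every row (which leaves $\tilde\mu$ and the orbit unchanged), the corollary to proposition \ref{prop:rhobar} yields $\tilde\mu = \tilde\delta$. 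A direct computation using the definition of $\tilde\mu_i$ and the ordering condition shows that $\tilde\mu_i = \mu_i$ (mirroring lemma \ref{lem:char2}); meanwhile each contribution to $\tilde\delta_i$ is bounded above by the corresponding contribution to $\tilde\mu_i$ and is strictly smaller at the index $i$ itself, so $\tilde\delta_i < \mu_i = \tilde\mu_i$, contradicting $\tilde\mu = \tilde\delta$.

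For part (3), given two minimal markings $\mu, \delta \in S_\orb$: if $v = 0$, then since $v_{i, \mu_i} \neq 0$ exactly when $\mu_i > 0$, we see $\mu = 0 = \delta$. If $v \neq 0$, both $\mu$ and $\delta$ satisfy the ordering condition from (4), so by the same computation as above $\mu_i = \tilde\mu_i$ and $\delta_i = \tilde\delta_i$ on their respective supports, with $\tilde\mu = \tilde\delta$. Thus $\mu$ and $\delta$ agree on the intersection of their supports. On a row $i$ where $\mu_i > 0$ but $\delta_i = 0$, I would unpack the formula $\tilde\delta_i = \mu_i$ to identify some $j$ with either $\delta_j = \mu_i$ and $j > i$, or $\lambda_i - \lambda_j + \delta_j = \mu_i$ with $j < i$; a short case analysis using the ordering condition on $\delta$ and the inequality $\lambda_j \geq \lambda_i$ (when $j < i$) forces $\lambda_j = \lambda_i$, $\epsilon_j = \epsilon_i$, and $\delta_j = \mu_i$. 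Pairing these indices with their symmetric counterparts yields a row permutation $\sigma$ that realizes the row-equivalence of $(\lambda, \epsilon, \mu)$ and $(\lambda, \epsilon, \delta)$.

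The hardest step will be this row-matching in (3): the formula defining $\tilde\mu_i$ is a maximum over two index sets, and one must track which index achieves the maximum on both sides in order to pair up positive entries of $\mu$ with positive entries of $\delta$ coherently. A case analysis in the spirit of lemmas \ref{lem:char1} and \ref{lem:char2} should suffice, but the bookkeeping for assembling these local matchings into a single consistent permutation $\sigma$ is where the most care is required.
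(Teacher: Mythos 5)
Your treatment of the colored case --- an orbit in $\kvnil$, where $v$ is a colored vector and hence every positively marked row has the same mark-color $\chi(v)$ --- is essentially the paper's argument: lemma \ref{lem:char1} gives the ``only if'' of (4), the identity $\tilde\mu_i = \mu_i$ on the support of $\mu$ (lemma \ref{lem:char2}) combines with the orbit-invariance of the minimal bipartition to produce the contradiction $\tilde\delta_i < \tilde\mu_i$, and the row-matching in (3) proceeds by locating a row $j$ with $\lambda_j = \lambda_i$, $\epsilon_j = \epsilon_i$, $\delta_j = \mu_i$ and swapping. Your derivation of (2) from minimality in the finite down-set $\{\delta \in S_\orb \mid \delta \leq \mu\}$ is fine, and slightly cleaner than the paper's route through (4).

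The gap is that the theorem is stated for any $\orb$ in the image of $\Psi$, and your argument never leaves the colored case. You invoke $\chi(v)$, proposition \ref{prop:rhobar} and its corollary (both of which hypothesize a monochromatic marking $\epsilon + [\lambda - \mu] = m$), and the identity $\tilde\mu_i = \mu_i$; all of these fail when $v$ is not colored. Concretely, take $\n = 2$, $\lambda = (2,2)$, $\epsilon = (0,0)$, $\mu = (1,2)$. The two positive marks have different mark-colors, so the ordering condition in (4) is vacuous and $\mu$ is in fact minimal; yet $\tilde\mu = (2,2) \neq \mu$, and $\delta = (0,2)$ satisfies $\tilde\delta = (2,2) = \tilde\mu$ while lying in a different orbit (no element of $K$ carries the colored vector $v_{2,2}$ to the non-colored $v_{1,1}+v_{2,2}$, since $K$ preserves each $V_i$). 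So in the multicolor setting $\tilde\mu$ is neither equal to $\mu$ on its support nor a complete invariant, and your contradiction cannot be run: the maximum defining $\tilde\delta_i$ may be attained at a row of a different mark-color, about which the hypothesis of (4) says nothing. What is missing is the reduction the paper performs at the end of its proof: decompose $v = \sum_m v_m$ by color, observe that any $k \in K$ fixing $x$ and carrying $v$ to $w$ must carry $v_m$ to $w_m$, note that the hypothesis of (4) and the minimality criterion decouple into the same conditions on each monochromatic marking $\mu^m$, and then apply the colored-case argument to each pair $\mu^m$, $\delta^m$ separately, reassembling the row permutations color by color.
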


\begin{proof}
  Claim (1) is just a restatement of the fact that $S_\orb$ contains
  at least one minimal element.  Claim (2) follows from the proof of
  lemma \ref{lem:char1} once we have proved (4).  We will show that
  any $\mu$ and $\delta$ satisfying the inequalities given in (4) must
  be equivalent.  The rest follows immediately from lemma
  \ref{lem:char1} because any minimal marking must satisfy these
  inequalities.
  
  We begin with the case $\orb \in \kvnil$.  First, observe that
  $\Psi(\lambda, \epsilon, \mu) = \Psi(\lambda, \epsilon, \delta)$
  forces $\tilde \mu = \tilde \delta$.  So, if $\mu_i \neq \delta_i$
  then by lemma \ref{lem:char2} exactly one of these must be zero.
  Let $i$ be the smallest index with $\mu_i \neq \delta_i$.  We may
  assume with no loss of generality that $\mu_i > 0$ and $\delta_i =
  0$.
  
  Since $\tilde \delta_i = \tilde \mu_i = \mu_i > 0 = \delta_i$, there
  is either $k < i$ with $\lambda_i - (\lambda_k - \delta_k) = \mu_i$
  or $j > i$ with $\delta_j = \mu_i$.  In the first case, $\lambda_i -
  \mu_i = \lambda_k - \delta_k$.  By minimality of $i$ we have $\mu_k
  = \delta_k$, so $\lambda_i - \mu_i = \lambda_k - \mu_k$.  By lemma
  \ref{lem:char1} we must have $\mu_k = 0$, so $\lambda_k = \lambda_i
  - \mu_i < \lambda_i$, a contradiction.
  
  We conclude that there exists $j > i$ with $\delta_j = \mu_i >
  \mu_j$, so $\mu_j = 0$.  Now, if $k > j$ is arbitrary then $\mu_k <
  \mu_i = \tilde \mu_j$, so $\tilde \mu_j = \max\{ \lambda_j -
  (\lambda_k - \mu_k) \mid k < j \}$.  Therefore, there exists $k < j$
  with $\mu_i = \lambda_j - (\lambda_k - \mu_k) \leq \mu_k$, hence
  $\mu_k > 0$ and $k \leq i$.  Now, $\mu_i = \lambda_j - (\lambda_k -
  \mu_k) \leq \lambda_i - (\lambda_k - \mu_k)$, hence $\lambda_i -
  \mu_i \geq \lambda_k - \mu_k$.  Since $\mu_i > 0$ and $\mu_k > 0$,
  we must have $k \geq i$.

  Since $k = i$ we have $\mu_i = \lambda_j - (\lambda_i - \mu_i)$,
  hence $\lambda_j = \lambda_i$.  Now, $\epsilon_j + [\lambda_j -
  \mu_j] = \epsilon_i + [\lambda_i - \mu_i]$, so $\epsilon_j =
  \epsilon_i + [\mu_j - \mu_i] = \epsilon_i + [\delta_j - \mu_i] =
  \epsilon_i$.  Therefore, rows $j$ and $i$ of $(\lambda, \epsilon)$
  are identical.  By swapping rows $i$ and $j$ of $\delta$ we obtain a
  new marking of $(\lambda, \epsilon)$ that is minimal and agrees with
  $\mu$ for all rows $k \leq i$.  The result follows by induction.
  
  For the general case, let $x \in \orb_{\lambda, \epsilon}$ and let
  $\catB = \{v_{i,j}\}$ be a colored Jordan basis for $x$ of type
  $(\lambda, \epsilon)$.  If we write $v = \sum v_{i, \mu_i}$ and $w =
  \sum v_{i, \delta_i}$ then there is an element $k \in K$ such that
  $k \cdot x = x$ and $k v = w$.  For each $m \in \zmod$, write
  \begin{align*}
    \mu^m_i &= \begin{cases} \mu_i & \epsilon_i + [\lambda_i - \mu_i]
      = m \\ 0 & \text{otherwise;} \end{cases} \\
    \delta_i^m &= \begin{cases} \delta_i & \epsilon_i + [\lambda_i -
      \delta_i] = m \\ 0 & \text{otherwise.} \end{cases}
  \end{align*}
  Set $v_m = \sum v_{i, \mu_i^m}$ and $w_m = \sum v_{i, \delta_i^m}$.
  Then $v = \sum v_m$ and $w = \sum w_m$.  It is evident that $k v =
  w$, so $(v, x)$ and $(w, x)$ lie in the same orbit in $\kvnil$.  But
  $\mu^m$ and $\delta^m$ are minimal by (4), hence $(\lambda,
  \epsilon, \mu^m)$ and $(\lambda, \epsilon, \delta^m)$ must be
  equivalent by (3).  This shows that we need only reorder the rows
  color by color to get the result we desire.
\end{proof}

Let $\ds \catP_\n^\text m$ denote the set of equivalence classes of
marked $\n$-colored partitions.  We define a binary operation $\ds
\cup: \catP_\n^\text m \times \catP_\n^\text m \to \catP_\n^\text m$
as follows.  Let $(\lambda, \epsilon, \mu)$ and $(\alpha, \beta,
\gamma)$ be representatives of elements of $\ds \catP_\n^\text m$.  We
can define $(\lambda, \epsilon, \mu) \cup (\alpha, \beta, \gamma)$ to
be the equivalence class of $(\Lambda(\lambda, \alpha),
\Lambda(\epsilon, \beta), \Lambda(\mu, \gamma))$, where \[\Lambda(f,
g)(i) = \begin{cases}
  f(i / 2) & i \text{ even,} \\
  g((i + 1) / 2) & i \text{ odd.}
\end{cases}\] In other words, we interlace the rows of the two objects
and then permute them to form a colored partition.

The operation $\cup$ is well-defined on equivalence classes and
defines an Abelian monoid structure on $\ds \catP_\n^\text m$.  What
is more, it is evident that the signature is a monoid homomorphism:
\[\xi((\lambda, \epsilon, \mu) \cup (\alpha, \beta, \gamma)) =
\xi(\lambda, \epsilon, \mu) + \xi(\alpha, \beta, \gamma).\] The set
$\catP_\n$ of $\n$-colored partitions is naturally a submonoid of $\ds
\catP_\n^\text m$ via the embedding $(\lambda, \epsilon) \mapsto
(\lambda, \epsilon, 0)$.  Also, if $k$ is a divisor of $\n$ then $\ds
\phi_k: \catP_\n^\text m \to \catP_k^\text m$ is a surjective monoid
homomorphism.
   
If $\mu$ is a minimal marking of $\lambda$ as given in the theorem
then there is a well-defined way of decomposing $(\lambda, \epsilon,
\mu)$ by selecting exactly those rows with nonzero marking.  Let $A =
\{i \in \setN \mid \mu_i > 0\}$ and $B = \{ i \in \setN \mid \lambda_i
> 0, \mu_i = 0 \}$.  Then \[(\lambda, \epsilon, \mu) =
\Delta_B(\lambda, \epsilon, \mu) \cup \Delta_A(\lambda, \epsilon,
\mu).\]

We call $\Delta_B(\lambda, \epsilon, \mu)$ the \emph{characteristic}
\gqb of $(\lambda, \epsilon, \gamma)$.  If we set $(\alpha, \beta,
\gamma) = \Delta_B(\lambda, \epsilon, \mu)$ then
\begin{enumerate}
\item $\gamma_i > 0$ for each $1 \leq i \leq l(\alpha)$ ;
\item $\gamma_i > \gamma_j$ and $\alpha_i - \gamma_i > \alpha_j -
  \gamma_j$ for each $(i, j)$ satisfying $1 \leq i < j \leq l(\alpha)$
  and $\beta_i + [\alpha_i - \gamma_i] = \beta_j + [\alpha_j -
  \gamma_j]$.
\end{enumerate}
If $\beta_i + [\alpha_i - \gamma_i] = \beta_j + [\alpha_j - \gamma_j]$
for each $1 \leq i < j \leq l(\alpha)$ then we simply call
$\Delta_B(\lambda, \epsilon, \mu)$ a \emph{characteristic
  bipartition}.
  
On the other hand, if we set $(\alpha, \beta, \gamma) =
\Delta_A(\lambda, \epsilon, \mu)$ then $\gamma_i = 0$ for each $i$.
So, we have the following result:

\begin{corollary}
  The product $\cup$ defines a bijection onto the image of $\Psi$ from
  the set of pairs $((\lambda, \epsilon, \mu), (\alpha, \beta)) \in
  \catP_\n^\text m \times \catP_\n$ that satisfy
  \begin{enumerate}
  \item $\xi(\lambda, \epsilon) + \xi(\alpha, \beta) = \xi(V)$;
  \item $(\lambda, \epsilon, \mu)$ is a characteristic \gqb.
  \end{enumerate}
\end{corollary}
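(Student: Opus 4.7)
The plan is to use the immediately preceding theorem on minimal markings to reduce this corollary to the already-established bijectivity of $\Psi$ on $\widetilde{\catQ}_{\xi(V)}$. Define the candidate map $\Phi$ by
\[\Phi\bigl((\lambda, \epsilon, \mu), (\alpha, \beta)\bigr) = \Psi\bigl((\lambda, \epsilon, \mu) \cup (\alpha, \beta, 0)\bigr).\]
Since $\cup$ is additive on signatures, condition (1) forces the joined marked colored partition to have signature $\xi(V)$, so $\Psi$ is defined on it, and the image lies in the image of $\Psi$ tautologically. Well-definedness on equivalence classes follows because $\cup$ descends to row equivalence and $\Psi$ is constant on row-equivalence classes.

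For surjectivity, let $\orb$ lie in the image of $\Psi$ with underlying colored partition $(\lambda', \epsilon')$. Part (1) of the preceding theorem supplies a minimal marking $\mu' \geq 0$ with $\Psi(\lambda', \epsilon', \mu') = \orb$. Split the index set into $A = \{i \mid \mu'_i > 0\}$ and $B = \{i \mid \lambda'_i > 0, \, \mu'_i = 0\}$ and set $(\lambda, \epsilon, \mu) = \Delta_B(\lambda', \epsilon', \mu')$ and $(\alpha, \beta, 0) = \Delta_A(\lambda', \epsilon', \mu')$. Then $(\lambda, \epsilon, \mu) \cup (\alpha, \beta, 0)$ is row-equivalent to $(\lambda', \epsilon', \mu')$, so $\Phi$ maps this pair to $\orb$. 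Signature additivity yields condition (1); condition (2) follows from part (4) of the preceding theorem, which translates minimality of $\mu'$ into precisely the strict inequalities on pairs of positively marked rows with matching residue that appear in the definition of a characteristic \gqb{}.

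Injectivity is the main technical point. Suppose two pairs $((\lambda, \epsilon, \mu), (\alpha, \beta))$ and $((\lambda', \epsilon', \mu'), (\alpha', \beta'))$ map to the same orbit under $\Phi$. I claim that both joined markings are minimal markings of that orbit: the inequalities of part (4) are vacuous on any pair involving a row with mark zero, while on pairs of rows with strictly positive marks they hold by the characteristic \gqb{} condition (2). Part (3) of the preceding theorem then produces a row-equivalence between $(\lambda, \epsilon, \mu) \cup (\alpha, \beta, 0)$ and $(\lambda', \epsilon', \mu') \cup (\alpha', \beta', 0)$.

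The obstacle to finish is to show that this row-equivalence respects the $A$-versus-$B$ decomposition. This is forced: since a row permutation transports each row together with its mark, the property ``$\mu_i > 0$'' is preserved along the permutation, so the permutation restricts to bijections between positively marked rows and between zero-marked rows. These restricted permutations give row equivalences $(\lambda, \epsilon, \mu) \sim (\lambda', \epsilon', \mu')$ and $(\alpha, \beta) \sim (\alpha', \beta')$ separately, yielding equality of the two pairs in $\catP_\n^\text{m} \times \catP_\n$ and completing the proof.
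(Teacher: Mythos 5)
Your proof is correct and follows exactly the route the paper intends: the corollary is stated as an immediate consequence of the minimal-marking theorem together with the $\Delta_A$/$\Delta_B$ decomposition, and you have simply spelled out that argument (existence of minimal markings for surjectivity, the part-(4) characterization to identify characteristic \gqb{}s with minimal markings, and part (3) plus preservation of the positive-mark/zero-mark split for injectivity). No gaps; this is a faithful expansion of the paper's implicit proof.
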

  
\begin{corollary}
  $\kvnil$ is in bijection with the set of pairs $((\lambda, \epsilon,
  \mu), (\alpha, \beta)) \in \catP_\n^\text m \times \catP_\n$ with
  $\xi(\lambda, \epsilon) + \xi(\alpha, \beta) = \xi(V)$ and
  $(\lambda, \epsilon, \mu)$ a characteristic colored bipartition.
\end{corollary}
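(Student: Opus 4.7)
The plan is to derive this corollary as the restriction of its immediate predecessor to the subset of the image of $\Psi$ equal to $\kvnil$. By Theorem~\ref{thm:param}(2)(a), $\kvnil$ is precisely the set of orbits in the image of $\Psi$ whose fibers contain a \cqb, rather than merely a \gqb. The preceding corollary already parametrizes the full image of $\Psi$ via the product $\cup$ on pairs of the form (characteristic \gqb, colored partition), so the task reduces to identifying which such pairs interlace to a \cqb under $\cup$.

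To carry this out, I would run the decomposition $(\Lambda, E, M) = \Delta_B(\Lambda, E, M) \cup \Delta_A(\Lambda, E, M)$ in both directions. Given a \cqb $(\Lambda, E, M) \in \widetilde \catQ_{\xi(V)}$ with $M \geq 0$, set $A = \{i \mid M_i > 0\}$ and $B = \{i \mid \Lambda_i > 0,\ M_i = 0\}$. The first factor is already a characteristic \gqb by the construction preceding the previous corollary; the stronger \cqb condition, namely that $E_i + (\Lambda - M)_i$ be constant for all $i$ with $\Lambda_i > 0$, restricts to the $A$-rows to give exactly the uniform color condition defining a characteristic colored bipartition. Conversely, the key observation is that interlacing a characteristic colored bipartition (with common color value $m$) with a colored partition whose rows satisfy $\beta_i + [\alpha_i] = m$ preserves the \qb-inequalities on each side and yields the uniform color congruence required of a \cqb.

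The main subtlety is that the statement presents $(\alpha, \beta)$ as an unconstrained colored partition, but in fact the color condition for the union to be a \cqb forces $\beta_i + [\alpha_i]$ on the support of $\alpha$ to match the common color value $m$ of the characteristic colored bipartition; this compatibility is implicit in the requirement that the product $\cup$ land in $\widetilde \catQ_{\xi(V)}$. A further subtlety is the $\n$-fold ambiguity from Theorem~\ref{thm:param}(2)(c): when the characteristic bipartition is empty, the $\n$ distinct \cqb representatives $\rho_m(\lambda, \epsilon, 0)$ of a single orbit in $\{0\} \times \nil$ collapse to one class in $\widetilde \catQ_{\xi(V)}$, matching a single pair of the form $((\emptyset, \emptyset, \emptyset), (\alpha, \beta))$ on the right-hand side and recovering the Jordan parametrization of $\knil$ from Theorem~\ref{thm:paramc}. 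Once these points are verified, the restriction of the bijection from the preceding corollary descends to the stated bijection onto $\kvnil$.
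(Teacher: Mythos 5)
Your overall strategy is the right one---the corollary is the restriction of the preceding corollary to $\kvnil$ via theorem \ref{thm:param}(2)(a)---but the criterion you use to carry out the restriction is wrong. You test whether the interlaced marked colored partition $(\lambda,\epsilon,\mu)\cup(\alpha,\beta,0)$ is \emph{itself} a \cqb, and from this you extract the constraint $\beta_i+[\alpha_i]=m$ on the second factor, conceding that this contradicts the statement (which leaves $(\alpha,\beta)$ unconstrained) and dismissing the discrepancy as ``implicit.'' The correct criterion is weaker: the union only needs to lie in a class of $\widetilde\catP_{\xi(V)}$ whose fiber under $\Psi$ contains a \cqb, and since the equivalence defining $\widetilde\catP_{\xi(V)}$ lets you re-mark every row with $\mu_i\leq 0$ by the unique value in $(-\n,0]$ realizing any prescribed mark color, the zero-marked rows---that is, all of $(\alpha,\beta)$---impose no condition at all. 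The condition lives entirely on the positively marked rows: the orbit is $\orb_{v,x}$ with $v=\sum_{i\in A}v_{i,\mu_i}$, and since these are linearly independent colored vectors, $v\in\widetilde V$ if and only if $\epsilon_i+[\lambda_i-\mu_i]$ is constant on $A=\{i\mid\mu_i>0\}$, i.e.\ if and only if $\Delta_B(\lambda,\epsilon,\mu)$ is a characteristic colored bipartition. That one-line observation is the step you are missing.

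Your spurious constraint would in fact break surjectivity. Take $\n=2$, $\xi(V)=(2,1)$, $x=0$, and $v\in V_0$ nonzero, so $(\lambda,\epsilon)=((1,1,1),(0,0,1))$ with minimal marking $\mu=(1,0,0)$. The decomposition pairs the characteristic colored bipartition $((1),(0),(1))$, of mark color $m=0$, with $(\alpha,\beta)=((1,1),(0,1))$, whose first row has $\beta_1+[\alpha_1]=[1]\neq m$; your condition rejects this pair even though $\orb_{v,x}\in\kvnil$. The union $((1,1,1),(0,0,1),(1,0,0))$ is indeed not a \cqb, but the marking $(1,1,0)$ in the same fiber is---which is exactly why ``the union is a \cqb'' is the wrong test. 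The same example undercuts your claim that $\Delta_B$ applied to a \cqb with nonnegative marking yields a characteristic \gqb: the \cqb $((1,1,1),(0,0,1),(1,1,0))$ has two positively marked rows with equal marks and equal mark colors, violating the strict inequalities. The decomposition must be applied to the \emph{minimal} marking of the orbit, which is what the construction preceding these corollaries actually does.
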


\section{The dimension of an orbit} \label{sec:dim}

In this section we construct elementary formulas for the dimension of
an orbit in $\knil$ or $\kvnil$.  This enables us to easily compute
the dimension of an orbit directly from a corresponding combinatorial
parameter (colored partition or \cqb).  We begin by presenting a few
examples that are well known.  We then construct a single formula that
has each of these examples as a special case.  As a consequence, we
will obtain a simple formula for the enhanced signed case $\n = 2$.

\subsection{Known examples} \label{sec:examples}

By way of comparison, we present a few relevant examples from
classical theory.  We begin with a convenient formula.  If $\lambda$
is a partition and $\lambda^t$ its transpose then we define \[
\eta(\lambda) = \sum_{i=1}^{l(\lambda)} (i-1) \lambda_i =
\sum_{i=1}^{l(\lambda)} \binom{\lambda_i^t} 2.  \]

It is well known that $G \cong GL(V)$ acts on the set of nilpotent
endomorphisms of $\ds V$ by conjugation.  In our formulation, this is
the case $\n = 1$.  The orbits are parametrized by partitions
$\lambda$ of size $k = \dim V$ and the dimension of the orbit
corresponding to $\lambda$ is given by \[ \dim \orb_\lambda = 2 \binom
k 2 - 2\eta(\lambda) = k^2 - \sum_{i = 1}^{l(\lambda)}
(\lambda_i^t)^2.  \]

We discussed earlier that if $\n = 2$ then $K \backslash \nil$ is
parametrized by signed partitions of signature $(\dim V_0, \dim V_1)$,
hence of size $k = \dim V_0 + \dim V_1 = \dim V$.  From classical
theory we know that the dimension of the orbit corresponding to
$(\lambda, \epsilon)$ is given by \[ \dim \orb_{\lambda, \epsilon} =
\binom k 2 - \eta(\lambda) = \frac 12 \dim \orb_\lambda = \frac 1 2
\dim \phi_1(\orb_{\lambda, \epsilon}).  \]

Lastly, we mention the formula given in \cite{AH} ($\n = 1$, once
again).  If $G = GL(V)$ acts on $V \times \catN$ by conjugation (where
here $\nil$ includes all nilpotent elements of $\End(V)$) then orbits
are parametrized by bipartitions $(\mu; \nu)$, where $\lambda = \mu +
\nu$ is any partition of size $k = \dim V$.  The dimension of an orbit
$\orb_{\mu;\nu} \in G \backslash (V \times \nil)$ is
\[ \dim \orb_{\mu; \nu} = \dim \orb_\lambda + |\mu| = \dim
\orb_\lambda = 2 \binom k 2 - 2\eta(\lambda) + |\mu|.  \]

\subsection{The dimension formula}

In the signed case one might guess, by analogy with the examples given
above, that if $(\lambda, \epsilon, \mu)$ is a \cqb[2] (or perhaps a
related signed bipartition) then $\dim \orb_{\lambda, \epsilon, \mu} =
\frac 12 \dim \orb_{\lambda} + \frac 12 |\mu| = \frac 12 \orb_{\mu,
  \lambda - \mu}$.  It is obvious from the outset, however, that this
would be overly optimistic as there is no guarantee that this is even
an integer.  We will see, however, that the correct formula is as
close to our guess as could reasonably be hoped.

We once again find the Achar-Henderson strategy to be an excellent
model for proving the general case.  The following definitions and
lemmas are entirely analogous to theirs.  We just need to make a few
minor changes to adapt them to our needs.

\begin{definition}
  For fixed $(v, x) \in V \times \nil$ we define the following
  auxiliary sets:
  \begin{align*}
    E^x &= \{ y \in \End (V) \mid yx = xy \}, \\
    E^{v,x} &= \{ y \in E^x \mid y \cdot v = 0\}, \\
    F^x &= \{ y \in E^x \mid y (V_i) \subset V_i \}, \\
    F^{v, x} &= \{ y \in F^x \mid y \cdot v = 0 \} = E^{v, x} \cap
    F^x, \\
    K^x &= F^x \cap K = E^x \cap K, \\
    K^{v, x} &= \{ y \in F^x \mid y \cdot v = v \}.
  \end{align*}
  Note that $E^x$, $E^{v,x}$, $F^x$, and $F^{v,x}$ are all linear
  spaces and that $K^x$ and $K^{v,x}$ are subgroups of $K$.
\end{definition}

\begin{proposition} \label{prop:dim} If $(v, x) \in V \times \nil$
  then $K^x$ and $K^{v, x}$ are connected algebraic groups and
  \begin{align*}
    \dim \orb_x &= \dim K - \dim F^x, \\
    \dim \orb_{v, x} &= \dim K - \dim F^x + \dim F^xv.
  \end{align*}
\end{proposition}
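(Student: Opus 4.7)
The plan is to recognize $K^x$ and $K^{v,x}$ as stabilizers under the relevant $K$-actions and then identify each one with a nonempty open subset of an affine space, which simultaneously yields connectedness and the dimension formulas via the orbit-stabilizer theorem.

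First I would observe that the conjugation action of $K$ on $\nil$ has stabilizer of $x$ equal to $\{k \in K \mid kx = xk\} = K \cap E^x$, and since any $k \in K$ already preserves each $V_i$, this equals $K \cap F^x = K^x$. Similarly, the stabilizer of $(v,x)$ in the diagonal action is $\{k \in K^x \mid kv = v\} = K^{v,x}$. The orbit-stabilizer theorem (valid since $K$ is a connected algebraic group acting on algebraic varieties) then gives
\begin{align*}
  \dim \orb_x &= \dim K - \dim K^x, \\
  \dim \orb_{v,x} &= \dim K - \dim K^{v,x}.
\end{align*}
Hence it remains to compute the dimensions of $K^x$ and $K^{v,x}$, and to verify connectedness.

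Next I would exploit the linear structure. Since $F^x$ is a linear subspace of $\End(V)$ and $K^x = F^x \cap GL(V)$ is cut out inside $F^x$ by the single open condition $\det \neq 0$, and since $I \in F^x$ with $\det I \neq 0$, the set $K^x$ is a nonempty Zariski-open subset of the irreducible affine variety $F^x$. It is therefore connected and of the same dimension, giving $\dim K^x = \dim F^x$ and hence the first displayed formula.

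For $K^{v,x}$ I would introduce the linear evaluation map $\mathrm{ev}_v : F^x \to V$ defined by $y \mapsto yv$, whose image is precisely $F^x v$ and whose kernel is $F^{v,x}$. The fiber $\mathrm{ev}_v^{-1}(v)$ contains $I$ (so is nonempty) and equals the affine subspace $I + F^{v,x}$, of dimension $\dim F^x - \dim F^x v$ by rank-nullity. Intersecting this affine subspace with the open set $GL(V)$ produces $K^{v,x}$; this is again a nonempty open subset of an irreducible affine space, hence connected of the same dimension. Combining with orbit-stabilizer yields
\[
  \dim \orb_{v,x} = \dim K - (\dim F^x - \dim F^x v) = \dim K - \dim F^x + \dim F^x v,
\]
as claimed. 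The only subtle point, and the main place to be careful, is to check that the conditions cutting out $K^x$ and $K^{v,x}$ really are just (i) a linear commutation plus colored condition and (ii) a linear equation $yv = v$, so that nothing is lost in passing from the group-theoretic stabilizers to these open subsets of affine spaces; everything else is bookkeeping with the orbit-stabilizer theorem.
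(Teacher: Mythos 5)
Your proposal is correct and follows essentially the same route as the paper: orbit--stabilizer, identification of $K^x$ as the nonempty principal open subset of the linear space $F^x$ cut out by $\det$, identification of $K^{v,x}$ with an open subset of the affine coset $I + F^{v,x}$ (the paper phrases this as the translate $y \mapsto y-1$ onto $F^{v,x}$), and rank--nullity applied to the evaluation map $y \mapsto yv$ from $F^x$ onto $F^xv$. No substantive differences.
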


\begin{proof}  
  $K$ acts transitively on $\orb_x$, so $\orb_x \cong K / K^x$, hence
  $\dim \orb_x = \dim K - \dim K^x$.  Now, $K^x$ is the principal open
  subvariety of (clearly connected) $F^x$ determined by $\det$, so
  $K^x$ is connected and $\dim K^x = \dim F^x$.  Therefore, $\dim
  \orb_x = \dim K - \dim F^x$.
    
  Similarly, $\orb_{v, x} \cong K / K^{v, x}$, hence $\dim \orb_{v, x}
  = \dim K - \dim K^{v, x}$.  $K^{v, x}$ is the principal open
  subvariety of $\{ y \in F^x \mid y \cdot v = v \}$ (which is
  isomorphic to $F^{v, x}$ via the map $y \mapsto y - 1$) determined
  by $\det$.  Therefore, $K^{v, x}$ is connected and $\dim K^{v, x} =
  \dim F^{v,x}$, so $\dim \orb_{v, x} = \dim K - \dim F^{v, x}$.
  Lastly, the multiplication map $F^x \to F^xv$ defined by $y \mapsto
  y \cdot v$ is linear and surjective, with kernel equal to $F^{v,
    x}$.  By the rank-nullity theorem, $\dim F^x v + \dim F^{v,x} =
  \dim F^x$, so $\dim \orb_{v, x} = \dim K - \dim F^x + \dim F^x v$.
\end{proof}

\begin{proposition}
  Fix $x \in \nil$ and let $\catB = \{ v_{i,j} \}$ be a colored Jordan
  basis for $x$ of type $(\lambda, \epsilon)$.  For $k, a, b \in
  \setN$ satisfying $1 \leq a \leq l(\lambda)$ and $1 \leq b \leq
  \lambda_a\}$ let $y_{k, a, b}$ denote the linear endomorphism of $V$
  defined by $y_{k, a, b}(v_{k, j}) = v_{a, b + j - \lambda_k}$ and
  $y_{k, a, b}(v_{i,j}) = 0$ if $i \neq k$.  Then
  \begin{enumerate}
  \item $E^x$ has basis $\catB_E = \{y_{k, a, b} \mid 1 \leq k, a \leq
    l(\lambda) , 1 \leq b \leq \min\{\lambda_a, \lambda_k\} \}$, so
    \begin{align*}
      \dim E^x &= \sum_{k  =  1}^{l(\lambda)} | s_{\lambda_k} (x) | \\
      &= \dim V + 2 \eta(\lambda).
    \end{align*}
        
  \item $F^x$ has basis $\catB_F = \{y_{k,a,b} \in \catB_E \mid
    \epsilon_a + [\lambda_a - b] = \epsilon_k \}$, so
    \[\dim F^x = \sum_{k = 1}^{l(\lambda)}
    s_{\lambda_k}(x)(\epsilon_k).\]
  \end{enumerate}
\end{proposition}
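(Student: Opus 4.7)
The plan is to reduce the proposition to the classical description of the centralizer of a nilpotent endomorphism and then extract the colored refinement. First I would verify directly that each $y_{k,a,b}$ in $\catB_E$ lies in $E^x$: writing $xv_{k,j} = v_{k,j-1}$ with the convention $v_{k,0} = 0$, both $xy_{k,a,b}$ and $y_{k,a,b}x$ send $v_{k,j}$ to $v_{a, b+j-\lambda_k-1}$ and kill $v_{i,j}$ for $i \neq k$. Linear independence of $\catB_E$ is then immediate by evaluating on the top-of-chain vector $v_{k,\lambda_k}$, which $y_{k,a,b}$ sends to $v_{a,b}$; distinct triples produce maps with distinct images among basis vectors.

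To show $\catB_E$ spans $E^x$, I would use that any $y$ commuting with $x$ is determined by the values $y(v_{k,\lambda_k})$, since $y(v_{k,j}) = yx^{\lambda_k-j}v_{k,\lambda_k} = x^{\lambda_k-j} y(v_{k,\lambda_k})$; and $y(v_{k,\lambda_k})$ may be any vector in $\ker x^{\lambda_k}$ (the constraint $x^{\lambda_k} y(v_{k,\lambda_k}) = y(x^{\lambda_k}v_{k,\lambda_k}) = 0$) and is otherwise free. Since a basis of $\ker x^{\lambda_k}$ is $\{v_{a,b} \mid 1 \leq a \leq l(\lambda),\, 1 \leq b \leq \min(\lambda_a, \lambda_k)\}$, the maps $y_{k,a,b}$ with $y_{k,a,b}(v_{k,\lambda_k}) = v_{a,b}$ and zero on the other chains form a basis of $E^x$, giving $\dim E^x = \sum_{k=1}^{l(\lambda)} |s_{\lambda_k}(x)|$. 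The closed form $\dim V + 2\eta(\lambda)$ then follows from the standard box-counting identity $\sum_{k,a} \min(\lambda_a, \lambda_k) = \sum_j (\lambda_j^t)^2$ combined with $\sum_j (\lambda_j^t)^2 = \sum_j \lambda_j^t + 2\sum_j \binom{\lambda_j^t}{2}$.

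For part (2), the key observation is that $y_{k,a,b}$ has a single ``source color'' $\epsilon_k = \chi(v_{k,\lambda_k})$ and a single ``target color'' $\epsilon_a + [\lambda_a - b] = \chi(v_{a,b})$, and by $x$-equivariance together with the fact that $x$ is $\sigma$-colored, this equality of colors propagates along the entire chain: $\chi(v_{k,j}) = \epsilon_k + [\lambda_k - j]$ and $\chi(v_{a, b+j-\lambda_k}) = \epsilon_a + [\lambda_a - b - j + \lambda_k]$ differ by exactly $\epsilon_k - (\epsilon_a + [\lambda_a - b])$, independent of $j$. Hence $y_{k,a,b}$ preserves the color grading if and only if $\epsilon_a + [\lambda_a - b] = \epsilon_k$. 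Since the general $y \in E^x$ decomposes uniquely with respect to $\catB_E$ and lies in $F^x$ precisely when each $y_{k,a,b}$-component is itself color-preserving (the coordinates of $y(v_{k,\lambda_k}) \in \ker x^{\lambda_k}$ with respect to the basis $\{v_{a,b}\}$ must be supported on color $\epsilon_k$), the claimed basis for $F^x$ follows. The dimension formula is then immediate: for fixed $k$, the allowed pairs $(a,b)$ are exactly those indexing a basis of $(\ker x^{\lambda_k})_{\epsilon_k}$, whose size is $\xi_{\epsilon_k}(\ker x^{\lambda_k}) = s_{\lambda_k}(x)(\epsilon_k)$.

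There is no genuine obstacle here — everything reduces to a careful bookkeeping of indices and colors on a colored Young diagram — but the one step requiring a little care is making the span argument watertight, specifically checking that the coefficient $C(k,a,b)$ forced by the commutation relation $yx = xy$ automatically vanishes unless $1 \leq b \leq \min(\lambda_a, \lambda_k)$ (values of $b$ outside this range either fail the nilpotency constraint $y(v_{k,\lambda_k}) \in \ker x^{\lambda_k}$ or contribute nothing to the action of $y$), so that $\catB_E$ indexes a genuine basis rather than a spanning set with redundancies.
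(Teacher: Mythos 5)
Your proposal is correct and follows essentially the same route as the paper: determine $y \in E^x$ by its values on the chain tops $v_{k,\lambda_k}$, identify the constraint as $y(v_{k,\lambda_k}) \in \ker x^{\lambda_k}$, and refine by color to get $\catB_F$ and the formula $\dim F^x = \sum_k s_{\lambda_k}(x)(\epsilon_k)$. The only (cosmetic) difference is that you obtain the closed form $\dim V + 2\eta(\lambda)$ via the transpose identity $\sum_{k,a}\min(\lambda_a,\lambda_k)=\sum_j(\lambda_j^t)^2$ where the paper does a direct double-sum manipulation.
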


\begin{proof} \
  \begin{enumerate}
  \item If $y \in E^x$ then $y v_{i,j} = y v_{i, \lambda_i -
      (\lambda_i - j)} = y x^{\lambda_i - j} v_{i, \lambda_i} =
    x^{\lambda_i - j} y v_{i, \lambda_i}$, so $y$ is determined by the
    values of $y v_{k, \lambda_k}$.  Write \[y v_{k, \lambda_k} =
    \sum_{i,j} a_{i,j} v_{i,j} = \sum_{i,j} a_{i,j} y_{k,
      i,j}(v_{k,\lambda_k}),\] so the span of the set of $y_{k,a,b}$
    certainly contains $E^x$.  That this set is linear independent
    follows from basic linear algebra.  Therefore, \[0 = y \cdot 0 = y
    x^{\lambda_k}v_{k, \lambda_k} = x^{\lambda_k} y v_{k, \lambda_k} =
    \sum_{i,j} a_{i,j} v_{i,j - \lambda_k}.\] By linear independence,
    if $a_{i,j} \neq 0$ then $j - \lambda_k \leq 0$, hence $j \leq
    \lambda_k$.  We conclude that $E^x$ is contained in the span of
    $\catB_E$.  It is easy to verify, however, that each element of
    $\catB_E$ lies in $E^x$.
    
    It is clear, then, that
    \begin{align*}
      \dim E^x &= \sum_{k = 1}^{l(\lambda)} \sum_{a = 1}^{l(\lambda)}
      \# \{b
      \mid 1 \leq b \leq \min\{\lambda_a, \lambda_k\}\} \\
      &= \sum_{k = 1}^{l(\lambda)} \sum_{a = 1}^k \# \{b \mid 1 \leq b
      \leq \lambda_k\} + \sum_{k =1}^{l(\lambda)} \sum_{a = k +
        1}^{l(\lambda)} \# \{b
      \mid 1 \leq b \leq \lambda_a\} \\
      &= \sum_{k = 1}^{l(\lambda)} \sum_{a = 1}^k \lambda_k + \sum_{k
        =1}^{l(\lambda)} \sum_{a = k + 1}^{l(\lambda)} \lambda_a
      \\
      &= \sum_{k = 1}^{l(\lambda)} k \lambda_k + \sum_{a =
        1}^{l(\lambda)} \sum_{k = 1}^{a - 1} \lambda_a \\
      &= \sum_{k = 1}^{l(\lambda)} \lambda_k + \sum_{k =
        1}^{l(\lambda)} (k - 1) \lambda_k + \sum_{a = 1}^{l(\lambda)}
      (a - 1) \lambda_a \\
      &= \dim V + 2 \eta(\lambda).
    \end{align*}

    The other formula for $\dim E^x$ follows from the fact that
    $E^xv_{k, \lambda_k} = \ker x^{\lambda_k}$.
    
  \item If $y_{k, a, b} \in F^x$ then $\epsilon_k = \chi(v_{k,
      \lambda_k}) = \chi (y_{k, a, b} v_{k, \lambda_k}) = \chi(v_{a,
      b}) = \epsilon_a + [\lambda_a - b]$.  We already know that such
    elements of $\catB_E$ are linearly independent and it is a quick
    exercise to verify that they are in $F^x$.  The dimension formula
    should be clear once we observe that for fixed $k$ the set
    $\{y_{k, a, b} v_{k, \lambda_k}\}$ is a basis for $\ker
    x^{\lambda_k} \cap V_{\epsilon_k}$.  \qedhere

  \end{enumerate}
\end{proof}

\begin{proposition} \label{prop:exv} Let $(v, x) \in V \times \nil$.
  Let $\catB = \{v_{i,j}\}$ be a colored Jordan basis for $x$ of type
  $(\lambda, \epsilon)$ and write $v = \sum a_{i,j} v_{i,j}$.  For
  convenience, set $a_{i,j} = 1$ if $j < 1$.  For each $m \in \zmod$
  we define a marking of $\lambda$: $\mu^m_i = \max \{j \in \setZ \mid
  a_{i,j} \neq 0, \xi(v_{i,j}) = m\}$.  We also define $\mu_i = \max
  \{j \in \setZ \mid a_{i,j} \neq 0\} = \max\{\mu^m_i \mid m \in
  \zmod\}$.  Let the corresponding minimal bipartitions be $(\lambda,
  \tilde \mu^m)$ and $(\lambda, \tilde \mu)$.  Then
  \begin{enumerate}
  \item $\ds \catB^{\tilde \mu}$ is a colored Jordan basis for $E^xv$.
    In particular, $E^xv$ is colored and $x$-stable, with $\ds
    \xi(E^xv) = \xi(\mu, \epsilon + [\lambda - \nu])$, so $\dim E^xv =
    |\tilde \mu|$.
  \item $\ds \bigsqcup_{m} \left(\catB^{\tilde \mu^m} \cap V_m
    \right)$ is a colored basis for $F^xv$.  In particular, $F^xv$ is
    colored and $x^{\n}$-stable, with $\ds \xi_m(F^xv) = \sum_{i =
      1}^{l(\lambda)} \left\lceil \frac {\mu^m_i} \n \right \rceil$,
    so $\ds \dim (F^x v) = \sum_{m = 0}^{\n - 1} \sum_{i =
      1}^{l(\lambda)} \left\lceil \frac {\mu^m_i} \n \right \rceil$.
  \end{enumerate}
\end{proposition}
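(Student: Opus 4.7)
My plan is to exhibit explicit bases of $E^x v$ and $F^x v$ using the generator description $\catB_E$ of the previous proposition, then read off signatures and dimensions.

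For part (1), each generator $y_{k,a,b}v = \sum_j a_{k,j}\,v_{a,\,b+j-\lambda_k}$ lies entirely in the $a$-th Jordan block $V^{(a)} := \Span{v_{a,1},\dots,v_{a,\lambda_a}}$, so $E^xv$ decomposes as $\bigoplus_a (E^xv \cap V^{(a)})$.  Since $x\in E^x$, each summand is an $x$-stable subspace of the cyclic space $V^{(a)}$, hence an initial segment $\Span{v_{a,1},\dots,v_{a,m_a}}$.  The top nonzero index appearing in $y_{k,a,b}v$ is at most $b+\mu_k-\lambda_k$, which when combined with $b\le\min(\lambda_a,\lambda_k)$ gives $m_a\le\mu_k$ if $k\ge a$ and $m_a\le\lambda_a-\lambda_k+\mu_k$ if $k<a$, hence $m_a\le\tilde\mu_a$ in either case.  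Conversely, the defining formula for $\tilde\mu_a$ realizes its value either as $\mu_k$ for some $k\ge a$, witnessed by $y_{k,a,\lambda_k}v$, or as $\lambda_a-\lambda_k+\mu_k$ for some $k<a$, witnessed by $y_{k,a,\lambda_a}v$, so $m_a=\tilde\mu_a$.  This identifies $E^xv=\Span{\catB^{\tilde\mu}}$, and the claimed colored and $x$-stable structure, signature, and dimension follow by inspecting the sub-diagram that $\tilde\mu$ cuts out of $(\lambda,\epsilon)$.

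For part (2), the argument is parallel but carries the $F^x$-color constraint $\epsilon_a+[\lambda_a-b]=\epsilon_k$.  The generators still live in $V^{(a)}$, so $F^xv=\bigoplus_a (F^xv\cap V^{(a)})$.  Although $x\notin F^x$ in general, $x^\n\in F^x$ (it commutes with $x$ and preserves each $V_i$), so each summand is at least $x^\n$-stable.  Splitting $y_{k,a,b}v$ by the colors of the inputs $v_{k,j}$ shows that the color-$m$ part has top index $b+\mu^m_k-\lambda_k$ in block $a$; maximizing over $k$ and over $b$ in the residue class forced mod $\n$ produces the largest color-$m$ position in block $a$ that is $\le\tilde\mu^m_a$.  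An elimination argument across different $(k,b)$-generators that agree on one color component but differ on others isolates individual color components and shows that the colored, $x^\n$-stable subspace $F^xv\cap V^{(a)}\cap V_m$ is spanned exactly by the color-$m$ vectors of $\catB^{\tilde\mu^m}$ lying in block $a$.  Assembling over $a$ and $m$ yields the basis $\bigsqcup_m(\catB^{\tilde\mu^m}\cap V_m)$, whence coloredness of $F^xv$ is automatic and the dimension formula follows by counting color-$m$ positions (invoking the convention $a_{i,j}=1$ for $j<1$ to keep $\mu^m_i$ well-defined when no positive index of color $m$ appears in the expansion of $v$).

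The hardest step is the color-constrained maximization in part (2): the residue condition on $b$ restricts it to a single class mod $\n$, so the top reachable from a given source block $k$ may fall short of the naive bound, and I must verify that maximizing over all $k$ still recovers the largest color-$m$ position $\le\tilde\mu^m_a$.  In parallel, the Gaussian elimination step must be shown genuinely to extract every color component of each mixed-color output $y_{k,a,b}v$ back into $F^xv$—this is what secures coloredness of $F^xv$ directly from the generator description rather than as a mere side consequence.
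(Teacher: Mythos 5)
Your argument for part (1) is correct and is a clean reorganization of the paper's: where the paper first projects $v$ onto single Jordan blocks, Gaussian-eliminates to show $v_{k,\mu_k}\in E^xv$, and then characterizes the reachable $v_{i,j}$, you instead decompose $E^xv$ by \emph{target} block, use the fact that an $x$-stable subspace of a cyclic block is an initial segment, and pin down the top index by a two-sided bound. That works and even avoids the elimination step in (1).

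Part (2), however, has a genuine gap, and it sits exactly where you flagged it. Your plan rests on "an elimination argument across different $(k,b)$-generators \dots isolates individual color components," but this step is not carried out, and in the stated generality it \emph{cannot} be: the color components of a generator $y_{k,a,b}v$ need not individually lie in $F^xv$. Concretely, take $\n=2$, $\lambda=(2)$, $\epsilon_1=0$, so $\catB=\{v_{1,1},v_{1,2}\}$ with $v_{1,2}\in V_0$, $v_{1,1}\in V_1$, and let $v=v_{1,1}+v_{1,2}$. Then $E^x=\Span{1,x}$ but $F^x=\Span{1}$, so $F^xv=\Span{v_{1,1}+v_{1,2}}$ is one-dimensional and \emph{not} colored, whereas $\mu^0_1=2$, $\mu^1_1=1$ and the claimed basis $\bigsqcup_m(\catB^{\tilde\mu^m}\cap V_m)$ has two elements. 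The resolution is that part (2) is really a statement about colored $v$ (which is how the paper uses it in the corollaries that follow, for $(v,x)\in\vnil$): if $v\in V_{m_0}$ then every $y_{k,a,b}\in\catB_F$ preserves colors, so each generator $y_{k,a,b}v$ is already monochromatic of color $m_0$ and no elimination is needed at all. With that reduction in hand, the remaining content of (2) is precisely the residue-class bookkeeping you also postponed: the admissible $b$ for $y_{k,a,b}\in\catB_F$ form a single congruence class mod $\n$ ending within $\n$ of $\min\{\lambda_a,\lambda_k\}$, each reachable top index is automatically a color-$m_0$ position, and since all such positions in block $a$ lie in one residue class mod $\n$, maximizing over $k$ does recover the largest color-$m_0$ position at most $\tilde\mu_a$; this check needs to be written out, together with the translation from $\tilde\mu^m$ to the $\lceil\mu^m_i/\n\rceil$ count. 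As written, the proposal asserts rather than proves both of the steps that constitute part (2), and the separation-of-colors step as described would fail.
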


\begin{proof}
  The proof of (2) should be clear once we have proved (1).  Since $x
  \in E^x$ it is clear that $E^xv$ is $x$-stable.  Now, $y_{k, k,
    \lambda_k} v = \sum a_{i,j} y_{k, k, \lambda_k} v_{i,j} = \sum
  a_{k,j} v_{k,j}$.  Set $v_k = \sum a_{k,j} v_{k,j}$.  It is clear,
  then, that $E^xv = E^x v_1 + \dots + E^xv_{l(\lambda)}$.  So, we may
  assume that $v= v_k$ lives in a single Jordan block.
  
  Since $E^xv$ is a vector space, we may assume that $a_{k, \mu_k} =
  1$.  Now, $y = y_{k,k,\lambda_k} - a_{k, \mu_{k - 1}} y_{k,k,
    \lambda_k - 1}$ is in $E^x$.  But $yv$ has no $v_{k,
    \mu_{k-1}}$-component.  By a similar construction, we may
  successively eliminate each component of $v_k$, leaving $v_{k,
    \mu_k}$.  In other words, we have shown that $v_{k, \mu_k} \in
  E^xv$.  But then by $x$-stability we have $v_{k, j} \in E^xv$ for
  each $1 \leq j \leq \mu_k$.  This also shows that some subset of
  $\catB$ is a basis of $E^xv$.
  
  Now, suppose that $v_{i,j} \in E^x v$, with $j > \mu_i$.  This
  occurs precisely if there is a $k \neq i$ with a choice of $a, b$
  such that $v_{i,j} = y_{k, a, b} v_{k, \mu_k} = v_{a, b + \mu_k -
    \lambda_k}$ and $1 \leq b \leq \min\{\lambda_a, \lambda_k\}$.
  Obviously, we must have $a = i$ and $j = b + \mu_k - \lambda_k$,
  with $1 \leq b \leq \min\{\lambda_i, \lambda_k\}$.  Substituting, we
  have $1 \leq j + \lambda_k - \mu_k \leq \min\{\lambda_i,
  \lambda_k\}$.  If $k < i$ then we have $j + \lambda_k - \mu_k \leq
  \lambda_i$, or $j \leq \lambda_i - (\lambda_k - \mu_k)$.  If $k > i$
  then we have $j + \lambda_k - \mu_k \leq \lambda_k$, or $j \leq
  \mu_k$.  Therefore, $v_{i,j} \in E^xv$ if and only if $j \leq
  \max(\{\mu_k \mid k \geq i\} \cup \{ \lambda_i - (\lambda_k - \mu_k)
  \mid k \leq i \})$.  In other words, $j \leq \tilde \mu_i$.
  
  The remainder of the claims follow immediately.
\end{proof}

We pause here to observe that propositions \ref{prop:exv} and
\ref{prop:rhobar} give an alternate proof that the \cqb associated to
$\orb$ is unique.  Proposition \ref{prop:exv} gives a canonical
interpretation of $(\lambda, \epsilon, \tilde \mu)$ that shows it is
an orbit invariant.  Proposition \ref{prop:rhobar} shows that any \cqb
corresponding to the orbit must be equal to $\bar \rho (\lambda,
\epsilon, \tilde \mu)$, hence is completely determined.  Similarly, if
$(\lambda, \epsilon, \mu)$ is a \cqb corresponding to $(v, x)$ and $W
= \setF[x](F^x(v))$ then $x|_W$ has colored Jordan type $(\mu,
\epsilon + [\lambda - \mu])$.

\begin{corollary}
  If $(v, x) \in \vnil$ corresponds to the \cqb $(\lambda, \epsilon,
  \mu)$ then $\ds \dim F^x v = \sum_{i = 1}^{l(\lambda)} \left\lceil
    \frac {\mu_i} \n \right \rceil$.
\end{corollary}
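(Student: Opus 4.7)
The plan is to apply Proposition~\ref{prop:exv}(2) to a normal basis for $(v,x)$ corresponding to the \cqb $(\lambda,\epsilon,\mu)$; such a basis exists by Theorem~\ref{thm:param}, and it satisfies $v = \sum_{i=1}^{l(\lambda)} v_{i,\mu_i}$, where we recall that $v_{i,j} = 0$ whenever $j \leq 0$. The task is to reduce the double sum $\sum_{m \in \zmod} \sum_i \lceil \mu^m_i/\n \rceil$ appearing in Proposition~\ref{prop:exv}(2) to the single sum $\sum_i \lceil \mu_i/\n \rceil$ from the statement.

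First I would record the coefficients: in the range $1 \leq j \leq \lambda_i$ we have $a_{i,j} = 1$ exactly when $j = \mu_i \geq 1$, and $a_{i,j} = 0$ otherwise. Combined with the convention $a_{i,j} = 1$ for $j < 1$ from Proposition~\ref{prop:exv}, the support in row $i$ is $\{j \in \setZ \mid a_{i,j} \neq 0\} = (-\infty,0] \cup \{\mu_i\}$ if $\mu_i \geq 1$, and $(-\infty,0]$ if $\mu_i \leq 0$. Extending the color formula $\chi(v_{i,j}) = \epsilon_i + [\lambda_i - j]$ to every $j \in \setZ$, one then reads off $\mu^m_i$ row by row.

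The key step is to verify that, for each fixed row $i$,
\[ \sum_{m \in \zmod} \left\lceil \frac{\mu^m_i}{\n} \right\rceil = \left\lceil \frac{\mu_i}{\n} \right\rceil. \]
For $\mu_i \geq 1$, set $m_i = \epsilon_i + [\lambda_i - \mu_i]$; then $\mu^{m_i}_i = \mu_i$ contributes $\lceil \mu_i/\n \rceil$, while for every other color $m$ the only candidates are $j \leq 0$ with the prescribed residue modulo $\n$, forcing $\mu^m_i \in [1-\n,0]$ and hence $\lceil \mu^m_i/\n \rceil = 0$. For $\mu_i \leq 0$, every $\mu^m_i$ again lies in $[1-\n,0]$, so each term vanishes, and the \cqb constraint $\mu_i > -\n$ guarantees $\lceil \mu_i/\n \rceil = 0$ as well. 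Summing the per-row identity over $i$ yields the stated formula.

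The only real obstacle is the bookkeeping around the convention $a_{i,j} = 1$ for $j < 1$ in combination with possibly non-positive markings in a \cqb; the crucial point is that the inequality $\mu_i > -\n$ built into the definition of a \cqb is exactly what is needed to make the ceilings behave correctly and to force the inner sum to collapse to a single nonzero term (or to zero).
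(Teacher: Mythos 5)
Your proposal is correct and is essentially the derivation the paper intends: the corollary is the specialization of Proposition~\ref{prop:exv}(2) to a normal basis with $v = \sum_i v_{i,\mu_i}$, where for each row the inner sum over colors collapses to the single term $\lceil \mu_i/\n\rceil$ because every other $\mu^m_i$ lies in $[1-\n,0]$. Your explicit per-row bookkeeping, including the observation that the \cqb{} constraint $\mu_i > -\n$ makes the $\mu_i \leq 0$ rows contribute zero on both sides, is exactly the right way to make this precise.
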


\begin{corollary}
  Let $(\lambda, \epsilon, \mu)$ be a \cqb with $(\lambda, \epsilon,
  \tilde \mu) = \bar \rho (\lambda, \epsilon, \mu)$ and set $\tilde
  \nu = \lambda - \tilde \mu$ and $\tilde \epsilon = \epsilon +
  [\nu]$.  If $(v, x) \in \vnil$ then $(v, x) \in \orb_{\lambda,
    \epsilon, \mu}$ if and only if $x |_{E^xv}$ has colored Jordan
  type $(\tilde \mu, \tilde \epsilon)$ and $x |_{V / E^xv}$ has
  colored Jordan type $(\tilde \nu, \epsilon)$.
\end{corollary}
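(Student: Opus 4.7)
The plan is to extract both colored Jordan types from a normal basis in the forward direction, and to reverse the implication using the uniqueness of the \cqb attached to an orbit given by theorem \ref{thm:param}. Both sides of the equivalence are orbit invariants: if $k \in K$ then $E^{k \cdot x}(kv) = k \cdot E^xv$, so conjugation by $k$ intertwines $x|_{E^xv}$ with $(k \cdot x)|_{E^{k \cdot x}(kv)}$ and similarly on the quotient, so we are free to verify the forward direction on any convenient representative of $\orb_{v,x}$.

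For the implication from left to right, fix $(v, x) \in \orb_{\lambda, \epsilon, \mu}$ and a normal basis $\catB = \{v_{i, j}\}$ for $(v, x)$ of type $(\lambda, \epsilon)$ with $v = \sum_i v_{i, \mu_i}$, furnished by theorem \ref{thm:param}(1). The \cqb hypothesis forces each nonzero summand $v_{i, \mu_i}$ to share the common color $m = \epsilon_i + [\nu_i]$, so in the notation of proposition \ref{prop:exv} the coefficient data of $v$ in $\catB$ is concentrated at the entries $(i, \mu_i)$, and the proposition's auxiliary marking agrees with the \cqb marking $\mu$ on all rows with $\mu_i > 0$. Consequently the proposition's minimal bipartition coincides with $\bar \rho(\lambda, \epsilon, \mu)$, and proposition \ref{prop:exv}(1) exhibits $\catB^{\tilde \mu}$ as a colored Jordan basis for $E^xv$, giving $x|_{E^xv}$ the colored Jordan type $(\tilde \mu, \tilde \epsilon)$. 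Applying lemma \ref{lem:sum}(3) to the marking $\tilde \mu$ then exhibits $\catB \setminus \catB^{\tilde \mu}$ as a colored Jordan basis for $x|_{V/E^xv}$ of type $(\lambda - \tilde \mu, \epsilon) = (\tilde \nu, \epsilon)$, completing this direction.

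For the converse, let $(v, x)$ satisfy both Jordan type conditions and let $(\alpha, \beta, \gamma)$ be the \cqb with $(v, x) \in \orb_{\alpha, \beta, \gamma}$ furnished by theorem \ref{thm:param}(2)(a). Write $\tilde \gamma$ for the marking satisfying $\bar \rho(\alpha, \beta, \gamma) = (\alpha, \beta, \tilde \gamma)$. Applying the forward direction to $(\alpha, \beta, \gamma)$ identifies the two colored Jordan types with the analogous invariants of that \cqb, and matching them against $(\tilde \mu, \tilde \epsilon)$ and $(\tilde \nu, \epsilon)$ forces $\beta = \epsilon$, $\alpha - \tilde \gamma = \tilde \nu$, and $\tilde \gamma = \tilde \mu$; adding the latter two yields $\alpha = \lambda$. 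If $v \neq 0$, the color $m = \chi(v)$ is intrinsic to $(v, x)$, and lemma \ref{lem:rho} then gives $\gamma = \rho_m(\alpha, \beta, \tilde \gamma) = \rho_m(\lambda, \epsilon, \tilde \mu) = \mu$. If $v = 0$ then $E^xv = 0$ forces $\tilde \mu = 0$ (so $\tilde \nu = \lambda$), and both sides of the equivalence reduce to the assertion $x \in \orb_{\lambda, \epsilon}$.

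The principal technical obstacle is the identification in the forward direction between the $\tilde \mu$ produced by proposition \ref{prop:exv} (defined via the support of $v$'s coordinates in an arbitrary colored Jordan basis) and the combinatorial $\tilde \mu = \bar \rho(\lambda, \epsilon, \mu)$ of the corollary (defined purely from the \cqb data). Working with a normal basis is what makes these coincide: it concentrates $v$'s support on a single coordinate per row, and the \cqb condition guarantees that every such coordinate carries the common color $m$, so the proposition's auxiliary construction collapses to the expected minimal bipartition. Once this alignment of notation is secure, both directions amount to bookkeeping with the bijections $\rho_m$, $\bar \rho$ of lemma \ref{lem:rho} and the colored-Jordan-basis manipulations of lemma \ref{lem:sum}.
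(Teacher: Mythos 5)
Your overall strategy is the same as the paper's: the forward direction rests on proposition \ref{prop:exv} together with lemma \ref{lem:sum}, and the converse on the uniqueness of the \cqb attached to an orbit. Your forward direction is sound, and the alignment you carry out between the auxiliary marking of proposition \ref{prop:exv} and $\bar\rho(\lambda,\epsilon,\mu)$ --- using a normal basis to concentrate the support of $v$ on one coordinate per row --- is exactly the point the paper leaves implicit. (Both you and the paper must read $\tilde\epsilon$ as $\epsilon+[\tilde\nu]$ rather than the literal $\epsilon+[\nu]$ of the statement; lemma \ref{lem:sum}(2) applied to $\tilde\mu$ gives the type $(\tilde\mu,\epsilon+[\lambda-\tilde\mu])$.)

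The converse, however, breaks at the final equality $\rho_m(\lambda,\epsilon,\tilde\mu)=\mu$ with $m=\chi(v)$. That identity holds only when $\chi(v)$ equals the color $\epsilon+[\lambda-\mu]$ of the given \cqb, and nothing in the hypotheses forces this. Concretely, take $\n=2$, $\lambda=(2,2)$, $\epsilon=(0,1)$, so $\xi(V)=(2,2)$. The markings $\mu=(0,1)$ and $\mu'=(1,0)$ are both \cqb{}s (of colors $0$ and $1$ respectively), both have $\bar\rho$-image $\tilde\mu=(1,1)$, and they label distinct orbits, since in one $v\in V_0\setminus\{0\}$ and in the other $v\in V_1\setminus\{0\}$. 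Yet for either orbit one computes $E^xv=\ker x=\Span{v_{1,1},v_{2,1}}$, so $x|_{E^xv}$ and $x|_{V/E^xv}$ have identical colored Jordan types in both cases (two rows of length one, one of each color). The right-hand side of the corollary therefore cannot distinguish the two orbits, and the ``if'' direction fails as literally stated. To be fair, the paper's own one-line converse (``there is only one way to pair them\dots so the \cqb is determined'') has the same lacuna: the pairing recovers the colored bipartition $(\lambda,\epsilon,\tilde\mu)$ but not the color $m$ needed to recover $\mu=\rho_m(\lambda,\epsilon,\tilde\mu)$. Your argument, and the corollary, become correct if one adds the hypothesis $\chi(v)=\epsilon+[\lambda-\mu]$ when $v\neq 0$; that is precisely what your appeal to ``$m$ is intrinsic to $(v,x)$'' is silently supplying without justification.
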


\begin{proof}
  The proposition, plus lemma \ref{lem:sum}, tells us the colored
  Jordan type of $x|_{E^xv}$ and of $x|_{V / E^xv}$.  Conversely, if
  $x|_{E^xv}$ and of $x|_{V / E^xv}$ are determined, there is only one
  way to pair them to get a colored bipartition, so the \cqb is
  determined, as well.
\end{proof}

\begin{corollary} \label{cor:dimA} If $(\lambda, \epsilon, \mu)$ is a
  \cqb and $s$ is as given in definition \ref{def:s} then
  \begin{align*}
    \dim \orb_x &= \sum_i \left(\dim V_i\right)^2 - \sum_{k =
      1}^{l(\lambda)}
    s_{\lambda_k}(x)(\epsilon_k), \\
    \dim \orb_{v, x} &= \sum_i \left(\dim V_i\right)^2 - \sum_{k =
      1}^{l(\lambda)} s_{\lambda_k}(x)(\epsilon_k) + \sum_{i =
      1}^{l(\lambda)} \left\lceil \frac {\mu_i} \n \right \rceil.
  \end{align*}
\end{corollary}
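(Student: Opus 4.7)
The plan is to obtain both formulas as a direct consequence of Proposition \ref{prop:dim} together with the two preceding results computing $\dim F^x$ and $\dim F^x v$. No new geometric content is required; this is an assembly step that packages the earlier work in purely combinatorial form.

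First I would compute $\dim K$. Since $K = \prod_{i \in \zmod} GL(V_i)$, we have $\dim K = \sum_i (\dim V_i)^2$, which accounts for the leading term in both formulas. By Proposition \ref{prop:dim}, $\dim \orb_x = \dim K - \dim F^x$ and $\dim \orb_{v,x} = \dim K - \dim F^x + \dim F^x v$. The proposition on the colored commutant (the one immediately preceding Proposition \ref{prop:exv}) gives
\[
\dim F^x = \sum_{k=1}^{l(\lambda)} s_{\lambda_k}(x)(\epsilon_k),
\]
and the corollary immediately before this statement gives
\[
\dim F^x v = \sum_{i=1}^{l(\lambda)} \left\lceil \frac{\mu_i}{\n} \right\rceil
\]
when $(\lambda, \epsilon, \mu)$ is the \cqb corresponding to $(v,x)$. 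Substituting both expressions into the identities from Proposition \ref{prop:dim} produces the two claimed formulas.

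The main point to take care with, rather than a real obstacle, is to confirm that the marking $\mu$ appearing in the second formula is precisely the \cqb marking produced by $\Psi$ (as in Theorem \ref{thm:param}), not some other row-equivalent representative. Since the preceding corollary was already stated in exactly those terms, and since $\lceil \mu_i/\n \rceil$ is invariant under permutations of rows of the same length and color, nothing further is needed; the proof reduces to substitution.
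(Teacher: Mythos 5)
Your derivation is correct and matches exactly how the paper intends the corollary to follow: $\dim K = \sum_i (\dim V_i)^2$ combined with Proposition \ref{prop:dim}, the basis computation giving $\dim F^x = \sum_k s_{\lambda_k}(x)(\epsilon_k)$, and the corollary to Proposition \ref{prop:exv} giving $\dim F^x v = \sum_i \lceil \mu_i/\n \rceil$. Nothing further is needed.
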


\begin{corollary} \label{cor:dimB} If $\n = 1$ and $(\lambda,
  \epsilon, \mu)$ is a \cqb[1] (bipartition) then
  \begin{align*}
    \dim \orb_{\lambda, \epsilon} & =  2 \binom {\dim V} 2 - 2\eta(\lambda), \\
    \dim \orb_{\lambda, \epsilon, \mu} & = 2 \binom {\dim V} 2 -
    2\eta(\lambda) + |\mu|.
  \end{align*}
\end{corollary}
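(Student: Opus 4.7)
The plan is to derive Corollary \ref{cor:dimB} as a direct specialization of Corollary \ref{cor:dimA} to the case $\n = 1$, where all of the ``colored'' apparatus collapses to the classical one. First I would record the simplifications that $\n = 1$ forces: since there is only one color, $V = V_0$, so $\sum_i (\dim V_i)^2 = (\dim V)^2$. The function $\epsilon$ is identically the unique element of $\zmod[1]$, which I identify with $0$, so $s_{\lambda_k}(x)(\epsilon_k) = s_{\lambda_k}(x)(0) = |s_{\lambda_k}(x)| = \dim \ker x^{\lambda_k}$. Finally, $\lceil \mu_i / \n \rceil = \mu_i$, and because $(\lambda, \mu)$ is a bipartition we have $\mu_i \geq 0$, so $\sum_{i} \lceil \mu_i / \n \rceil = \sum_i \mu_i = |\mu|$.

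Second, I would invoke the identity $\sum_{k=1}^{l(\lambda)} |s_{\lambda_k}(x)| = \dim V + 2\eta(\lambda)$ that is already recorded in the formula for $\dim E^x$ in the proposition immediately preceding Proposition~\ref{prop:exv}. Under the simplifications above, this is exactly the sum $\sum_k s_{\lambda_k}(x)(\epsilon_k)$ appearing in Corollary~\ref{cor:dimA}. Substituting into the first line of Corollary~\ref{cor:dimA} yields
\[
\dim \orb_{\lambda,\epsilon} \;=\; (\dim V)^2 - \bigl(\dim V + 2\eta(\lambda)\bigr) \;=\; (\dim V)^2 - \dim V - 2\eta(\lambda) \;=\; 2\binom{\dim V}{2} - 2\eta(\lambda),
\]
and substituting into the second line and adding $|\mu|$ gives $\dim \orb_{\lambda,\epsilon,\mu} = 2\binom{\dim V}{2} - 2\eta(\lambda) + |\mu|$.

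There is essentially no obstacle: the only nontrivial content is the combinatorial identity $\sum_k |s_{\lambda_k}(x)| = \dim V + 2\eta(\lambda)$, which is already proved in the excerpt, and the verification that the bipartition hypothesis forces $\mu_i \geq 0$ so that the ceiling function drops away. The whole corollary is therefore a transparent substitution.
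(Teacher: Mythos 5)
Your proof is correct and is essentially the paper's argument: the paper's entire proof is the one-line observation that $F^x = E^x$ when $\n = 1$, which is precisely what your identification $s_{\lambda_k}(x)(\epsilon_k) = |s_{\lambda_k}(x)|$ (together with $\lceil \mu_i/1\rceil = \mu_i$) amounts to, after which both arguments rest on the already-proved identity $\sum_k |s_{\lambda_k}(x)| = \dim V + 2\eta(\lambda)$. The substitution into Corollary \ref{cor:dimA} is carried out correctly.
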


\begin{proof}
  If $\n = 1$ then $F^x = E^x$.
\end{proof}

Once again, we recall that if $\n = 2$ then we customarily use $+$ and
$-$ in place of $0$ and $1$, respectively, as the colors that decorate
our partitions.  So, by a signed \qb[2] of signature $(p, q)$ we
simply mean a \cqb[2] that has $p$ boxes labeled with $+$ and $q$
boxes labeled with $-$.

\begin{corollary} \label{cor:dim} If $\n = 2$ then orbits in $\kvnil$
  are parametrized by signed \qb[2]{}s.  If $(\lambda, \epsilon, \mu)$
  is a signed \qb[2] then
  \begin{align*}
    \dim \orb_{\lambda, \epsilon} & =  \binom {\dim V} 2 - \eta(\lambda), \\
    \dim \orb_{\lambda, \epsilon, \mu} & = \binom {\dim V} 2 -
    \eta(\lambda) + \sum_{i=1}^{l(\lambda)} \ceil {\frac {\mu_i} 2}.
  \end{align*}
\end{corollary}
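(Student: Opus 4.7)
The plan is to deduce Corollary~\ref{cor:dim} by specializing Corollary~\ref{cor:dimA} to $\n = 2$. The parametrization of $\kvnil$ by signed \qb[2]{}s is immediate from Theorem~\ref{thm:param}, once the two colors $0,1$ are relabeled as $+,-$. Since the correction term $\sum_i \ceil{\mu_i / \n}$ in Corollary~\ref{cor:dimA} specializes directly to $\sum_i \ceil{\mu_i / 2}$, it suffices to establish the unenhanced formula $\dim \orb_{\lambda, \epsilon} = \binom{\dim V}{2} - \eta(\lambda)$; the enhanced version then follows by adding the marking correction.

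Set $p = \dim V_0$ and $q = \dim V_1$. By Corollary~\ref{cor:dimA}, the unenhanced formula is equivalent to
\[ \dim F^x = \tfrac{1}{2}\bigl[(p-q)^2 + (p+q)\bigr] + \eta(\lambda), \]
which uses the elementary identity $p^2 + q^2 - \binom{p+q}{2} = \tfrac{1}{2}[(p-q)^2 + (p+q)]$. To compute $\dim F^x$, I plan to decompose the basis $\catB_E = \{y_{k,a,b}\}$ of $E^x$ supplied by the preceding proposition into color-preserving and color-swapping parts, giving $E^x = F^x \oplus E^x_1$ and hence $\dim F^x + \dim E^x_1 = \dim E^x = (p+q) + 2\eta(\lambda)$. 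What remains is the parity identity
\[ \dim F^x - \dim E^x_1 = (p-q)^2. \]

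To prove this identity I would run a character-style calculation. Each basis element $y_{k,a,b}$ contributes $(-1)^{\chi(v_{a,b}) - \epsilon_k}$ to the difference, so
\[ \dim F^x - \dim E^x_1 \;=\; \sum_{k,a}(-1)^{\epsilon_a + \epsilon_k + \lambda_a}\sum_{b=1}^{\min(\lambda_a, \lambda_k)}(-1)^b. \]
Using $\sum_{b=1}^m (-1)^b = \tfrac{(-1)^m - 1}{2}$, the right-hand side splits as the half-difference of two factorizable bilinear sums in the row data. Introducing $\alpha = \sum_{i:\lambda_i \text{ odd}}(-1)^{\epsilon_i}$ and $\beta = \sum_{i:\lambda_i > 0,\ \lambda_i \text{ even}}(-1)^{\epsilon_i}$, and separating ordered pairs $(a,k)$ by the cases $\lambda_a \le \lambda_k$ versus $\lambda_a > \lambda_k$, one checks that these two sums evaluate to $\alpha^2 + \beta^2$ and $\beta^2 - \alpha^2$ respectively, so that the half-difference collapses to $\alpha^2$. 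A direct row-by-row count of colored boxes in $V$ shows that only rows of odd length contribute to $\xi_0(V) - \xi_1(V)$, each with sign $(-1)^{\epsilon_i}$, yielding $\alpha = p - q$ and closing the identity.

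The main obstacle is precisely this parity bookkeeping in the character sum; everything else is routine algebra. Once $\dim F^x$ is known, substitution into Corollary~\ref{cor:dimA} together with $p^2 + q^2 = \tfrac{1}{2}[(p+q)^2 + (p-q)^2]$ yields $\dim \orb_{\lambda, \epsilon} = \binom{p+q}{2} - \eta(\lambda)$, and adding the $\sum_i \ceil{\mu_i / 2}$ term completes the proof of Corollary~\ref{cor:dim}.
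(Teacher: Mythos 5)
Your proposal is correct, and it takes a genuinely different route from the paper: the paper offers no proof of this corollary at all, having already quoted the unenhanced formula $\dim \orb_{\lambda, \epsilon} = \binom{\dim V}{2} - \eta(\lambda)$ as a known fact from classical theory in the ``Known examples'' subsection, and leaving the reader to append the correction term $\sum_i \ceil{\mu_i/2}$ from corollary \ref{cor:dimA}. You instead derive the unenhanced formula internally from the paper's own machinery, by computing $\dim F^x$ for $\n = 2$. Your key identity $\dim F^x - \dim E^x_1 = (p-q)^2$ checks out: writing $c_i = (-1)^{\epsilon_i}$ and $d_i = (-1)^{\epsilon_i + \lambda_i}$, the sum over ordered pairs with $\lambda_a \leq \lambda_k$ of $c_a c_k$ plus the sum over pairs with $\lambda_a > \lambda_k$ of $d_a d_k$ symmetrizes to $\frac{1}{2}\left[(\alpha+\beta)^2 + (\beta-\alpha)^2\right] = \alpha^2 + \beta^2$ (the diagonal and equal-length pairs contribute identically to both expressions), the second sum factors as $(\beta-\alpha)(\beta+\alpha) = \beta^2 - \alpha^2$, and the row-by-row count giving $\alpha = p - q$ is exactly right since only odd rows have a color imbalance, of sign $(-1)^{\epsilon_i}$. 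Combined with $\dim F^x + \dim E^x_1 = \dim E^x = (p+q) + 2\eta(\lambda)$, this yields $\dim F^x = \frac{1}{2}\left[(p-q)^2 + (p+q)\right] + \eta(\lambda)$, which is precisely what corollary \ref{cor:dimA} requires. What your approach buys is self-containment: it proves the classical signed-partition dimension formula as a consequence of the general colored formula rather than assuming it, and the character-sum technique would plausibly extend to give closed forms for other small $\n$. The cost is the parity bookkeeping, which is delicate but which you have handled correctly.
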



\begin{thebibliography}{1}

\bibitem{AH}
Pramod~N. Achar and Anthony Henderson.
\newblock Orbit closures in the enhanced nilpotent cone.
\newblock {\em Adv. Math.}, 219(1):27--62, 2008.

\bibitem{collingwood}
David~H. Collingwood and William~M. McGovern.
\newblock {\em Nilpotent orbits in semisimple {L}ie algebras}.
\newblock Van Nostrand Reinhold Mathematics Series. Van Nostrand Reinhold Co.,
  New York, 1993.

\bibitem{henderson}
Anthony Henderson.
\newblock Fourier transform, parabolic induction, and nilpotent orbits.
\newblock {\em Transform. Groups}, 6(4):353--370, 2001.

\bibitem{kato-2006}
Syu Kato.
\newblock An exotic {S}pringer correspondence for symplectic groups,
 preprint, {\tt arXiv:math/0607478v2}.
\bibitem{kato}
Syu Kato.
\newblock An exotic {D}eligne-{L}anglands correspondence for symplectic groups.
\newblock {\em Duke Math. J.}, 148(2):305--371, 2009.

\bibitem{kempken}
Gisela Kempken.
\newblock {\em Eine {D}arstellung des {K}\"ochers {$\tilde A_{k}$}}.
\newblock Bonner Mathematische Schriften [Bonn Mathematical Publications], 137.
  Universit\"at Bonn Mathematisches Institut, Bonn, 1982.
\newblock Dissertation, Rheinische Friedrich-Wilhelms-Universit{\"a}t, Bonn,
  1981.

\bibitem{travkin}
Roman Travkin.
\newblock Mirabolic {R}obinson-{S}chensted-{K}nuth correspondence.
\newblock {\em Selecta Math. (N.S.)}, 14(3-4):727--758, 2009.

\end{thebibliography}
\end{document}